\theoremstyle{plain}
\newtheorem{theorem}{Theorem}
\newtheorem{lemma}[theorem]{Lemma}
\newtheorem{corollary}[theorem]{Corollary}
\newtheorem{conjecture}[theorem]{Conjecture}
\newtheorem*{claim*}{Claim}
\newtheorem{claim}{Claim}
\newtheorem{observation}[theorem]{Observation}
\newtheorem{question}[theorem]{Question}
\newtheorem{remark}[theorem]{Remark}
\newtheorem*{remark*}{Remark}
\theoremstyle{definition}
\newcommand{\eps}{\varepsilon}
\newcommand{\cR}{\mathcal{R}}
\newcommand{\NN}{\mathbb{N}}
\newcommand{\HH}{\mathcal{H}}
\newcommand{\girth}{{\rm girth}}
\newcommand{\req}{\overset{R}{\sim}}
\newcommand{\epsto}{\overset{\epsilon}{\to}}
\newcommand{\erdoes}{Erd{\H{o}}s}
\newcommand{\lovasz}{Lov{\'a}sz}
\title{Conditions on Ramsey non-equivalence}
\author{Maria Axenovich}
\author{Jonathan Rollin}
\author{Torsten Ueckerdt}
\affil{Department of Mathematics, Karlsruhe Institute of Technology}
\begin{document}

\maketitle

\begin{abstract}
Given a graph $H$, a graph $G$ is  called a \emph{Ramsey graph of $H$} if there is a monochromatic copy of $H$ in every coloring of the edges of $G$ with two colors.
Two graphs $G$, $H$ are called \emph{Ramsey equivalent} if they have the same set of Ramsey graphs.
Fox \textit{et al.}
[{\em J. Combin. Theory Ser. B} \textbf{109} (2014), 120--133]
asked whether there are two non-isomorphic connected graphs that are  Ramsey equivalent.
They proved that a clique is not Ramsey equivalent to any other connected graph.  
Results of Ne{\v{s}}et{\v{r}}il \textit{et al.} showed that any two graphs with different clique number
[{\em Combinatorica} \textbf{1}(2) (1981), 199--202]
or different odd girth
[{\em Comment. Math. Univ. Carolin.} \textbf{20}(3) (1979), 565--582]
are not Ramsey equivalent. 
These are the only structural graph parameters we know that ``distinguish'' two graphs in the above sense. 
This paper provides further supportive evidence for a negative answer to the question of Fox \textit{et al.} by claiming that for wide classes of graphs, chromatic number is a distinguishing parameter. 
In addition, it is shown here that all stars and paths and all connected graphs on at most $5$ vertices are not Ramsey equivalent to any other connected graph.
Moreover two connected graphs are not Ramsey equivalent if they belong to  a special class of trees or to classes of graphs with clique-reduction properties.
\end{abstract}

{\bf Keywords:} {Ramsey, Ramsey equivalence, chromatic number, Ramsey classes}

\section{Introduction}

Given a graph $H$, a graph $G$ is called  a \emph{Ramsey graph of $H$} if there is a monochromatic copy of $H$ in every coloring of the edges of $G$ with  two colors.
 If $G$ is a Ramsey graph of $H$, we write $G\to H$ and say that $G$ arrows $H$.
 We denote by $\cR(H)$ the set of all graphs that arrow $H$, and call it the {\em Ramsey class of $H$}.
 So the Ramsey number $R(H)$ is the smallest integer $n$ such that $K_n \in \cR(H)$, where $K_n$ denotes the complete graph on $n$ vertices.
 Two graphs $G$, $H$ are called {\em Ramsey equivalent} if they have the same Ramsey class.
 We write $G\req H$ if $G$ is Ramsey equivalent to $H$, and write $G\not \req H$ otherwise.
 The study of Ramsey classes was initiated by the fundamental work of Ne{\v{s}}et{\v{r}}il and R\"odl~\cite{CliqueNoRamseyGeneral} and Burr, Erd\H{o}s, and Lov\'asz~\cite{BEL_Ramsey}.
 However,  the notion of Ramsey equivalence of graphs was raised only recently by Szab{\'o} \textit{et al.}~\cite{Szabo_RamseyMinBipartite}. 
 It was shown in~\cite{Szabo_RamseyMinBipartite}, that there are two non-isomorphic  graphs that are Ramsey equivalent, for example $ G \req H$, where $G=K_t$ and $H$ is a vertex disjoint union of $K_t$ and $K_{2}$ for $t\geq 4$.
 In this example $H$ is a disconnected graph. Fox \textit{et al.} formulated a question for connected graphs:
 \begin{question}[\cite{Fox_EquivClique}]\label{que:MAIN}
  Are there two non-isomorphic connected graphs $G$ and $H$ with $G\req H$?
 \end{question}
 
 Note that $G \not\req H$ if and only if there exists a graph $\Gamma$ such that $\Gamma \to H$ and $\Gamma\not\to G$ or $\Gamma \to G$ and $\Gamma \not\to H$.
 In this case we call $\Gamma$ a graph, {\em distinguishing} $G$ and $H$.
 So in order to prove that $G \not\req H$, it is sufficient to explicitly construct a distinguishing graph.
 Another approach is to identify a graph parameter $s$, such that $s(G)\neq s(H)$ implies that $G\not\req H$.
 In this case, we say that $s$ is a {\it Ramsey distinguishing parameter}.
 The only structural graph parameters that we know to be Ramsey distinguishing are the clique number $\omega$ and the odd girth $g_o$, where $\omega$ is the largest number of vertices in a clique of the graph, and $g_o$ is the length of its shortest odd cycle. 
 Specifically, it is shown in~\cite{SimpleRamseySameClique, RamseyOddGirth} that if $\omega(H)=\omega$ and $g_o(H)=g_o$ then there are  Ramsey graphs $G, G'\in \cR(H)$ such that $\omega(G)=\omega$ and $g_o(G')= g_o$.
 Note that if Question~\ref{que:MAIN} has a negative answer, then any graph parameter is a Ramsey distinguishing parameter for the class of connected graphs.\\
 
 In this paper, we provide a supporting evidence for a 'No'-answer to Question~\ref{que:MAIN} by the following theorems, focusing  on another graph parameter, the chromatic number, $\chi$. 

\begin{observation}\label{obs::bipartite}
 If $G$ and $H$ are graphs, $\chi(G)=2$ and $\chi(H)>2$ then $G\not\req H$.
\end{observation}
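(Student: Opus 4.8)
The plan is to exhibit a single graph $\Gamma$ that is a Ramsey graph of $G$ but not of $H$; by the characterization of non-equivalence recorded just before the statement, producing such a distinguishing $\Gamma$ immediately yields $G\not\req H$. The guiding idea is that bipartiteness is a hereditary embedding obstruction: a bipartite host can never contain a non-bipartite subgraph, so the non-bipartiteness of $H$ will make $\Gamma\not\to H$ for free, while the bipartiteness of $G$ will let me arrow $G$ with a bipartite $\Gamma$.

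First I would dispatch the easy half. Since $\chi(H)>2$, the graph $H$ contains an odd cycle and is therefore not bipartite. Consequently, if $\Gamma$ is any bipartite graph, then $\Gamma$ contains no copy of $H$ at all (a bipartite graph has no odd cycle), and in particular no monochromatic copy; hence $\Gamma\not\to H$ for every bipartite $\Gamma$. So it suffices to produce one bipartite graph $\Gamma$ with $\Gamma\to G$.

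For the substantive half I would take $\Gamma=K_{N,N}$ with $N$ large and show $K_{N,N}\to G$. Since $\chi(G)=2$, the graph $G$ is bipartite and hence embeds as a subgraph of $K_{m,m}$ for $m=|V(G)|$; it therefore suffices to find a monochromatic $K_{m,m}$ in every $2$-coloring of $E(K_{N,N})$, as any such complete bipartite subgraph contains a monochromatic copy of $G$. To locate a monochromatic $K_{m,m}$ I would run a double pigeonhole on the host with sides $X,Y$ of size $N$: set $|Y|=N\geq 2m$, and for each $x\in X$ record the coloring of the edges it sends to $Y$, a function $Y\to\{1,2\}$. There are $2^{|Y|}$ such functions, so if $|X|=N>m\cdot 2^{2m}$ then some $m$ vertices of $X$ induce the identical function; among these the majority color is received on at least $|Y|/2\geq m$ common vertices of $Y$, giving a monochromatic $K_{m,m}$. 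Taking $N=m\cdot 2^{2m}+1$ thus yields $K_{N,N}\to K_{m,m}\supseteq G$, so $\Gamma=K_{N,N}\to G$.

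Combining the two halves, the bipartite graph $\Gamma=K_{N,N}$ satisfies $\Gamma\to G$ and $\Gamma\not\to H$, so $\Gamma$ distinguishes $G$ and $H$ and therefore $G\not\req H$. The only real content is the second step, namely the existence of a bipartite Ramsey graph for the bipartite graph $G$; this is where the pigeonhole argument (equivalently, the bipartite Ramsey theorem for complete bipartite graphs) is needed, and it is the part I would state most carefully. Everything else follows from the parity obstruction that an odd cycle cannot embed into a bipartite graph.
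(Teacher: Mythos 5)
Your overall route is exactly the paper's: the paper disposes of this observation in two sentences, citing the bipartite Ramsey theorem (Beineke and Schwenk) for the fact that a sufficiently large complete bipartite graph arrows any fixed bipartite graph, and noting that a complete bipartite graph contains no odd cycle and hence neither contains nor arrows any non-bipartite graph. Your ``easy half'' and the overall structure are identical to this; the only difference is that you prove the bipartite Ramsey step rather than citing it, and there your pigeonhole as literally written does not work. With $|Y|=N$, each $x\in X$ determines one of $2^{N}$ colour patterns on $Y$, and since $|X|=N<2^{N}$ you cannot conclude that $m$ vertices of $X$ share a pattern, no matter how large you take $N$.

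The appearance of $2^{2m}$ in your bound shows what you meant, and the repair is one line: fix a subset $Y'\subseteq Y$ with $|Y'|=2m$ and record each $x$'s pattern only on $Y'$. There are then $2^{2m}$ possible patterns, so $|X|>(m-1)\cdot 2^{2m}$ forces $m$ vertices of $X$ to agree on $Y'$, and the majority colour of that common pattern reaches at least $m$ vertices of $Y'$, giving a monochromatic $K_{m,m}\supseteq G$. With that correction your argument is complete and coincides with the paper's, merely replacing the citation by a self-contained proof of the bipartite Ramsey fact.
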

 Indeed, a sufficiently large complete bipartite graph arrows any fixed bipartite graph~\cite{bipartiteRamsey}.
 But it does not contain, and thus does not arrow any non-bipartite graph.
 Here, we prove that for several large classes of connected graphs, the chromatic number is a Ramsey distinguishing parameter. 
 A graph is called \emph{clique-splittable} if its vertex set can be partitioned into two subsets, each inducing a subgraph of smaller clique number.
 Note that any graph $G$ with $\chi(G) \leq 2\omega(G)-2$ is clique-splittable.
 In particular all cliques and all planar graphs containing a triangle are clique-splittable.
 The triangle-free clique-splittable graphs are precisely the bipartite graphs.
 
 \begin{theorem}\label{thm::DifferentChrNo}
 If $G,H$ are graphs, $G$ is clique-splittable and $\chi(G) < \chi(H)$, then $G\not\req H$.
 \end{theorem}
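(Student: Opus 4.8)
The plan is to exhibit a graph $\Gamma$ distinguishing $G$ and $H$ by arrowing the graph of smaller chromatic number: I will arrange $\Gamma\to G$ but $\Gamma\not\to H$. Everything hinges on one elementary fact relating edge-partitions to chromatic number. If $E(\Gamma)$ is split into two graphs $\Gamma_1,\Gamma_2$, then colouring each vertex $v$ by the pair $(c_1(v),c_2(v))$ of its colours in optimal proper colourings of $\Gamma_1,\Gamma_2$ is a proper colouring of $\Gamma$, so $\chi(\Gamma)\le\chi(\Gamma_1)\,\chi(\Gamma_2)$. Read backwards: whenever $\chi(\Gamma)\le(\chi(H)-1)^2$, fix a proper colouring of $\Gamma$ with colour set $[\chi(H)-1]\times[\chi(H)-1]$ and two-colour $E(\Gamma)$ by declaring an edge red if the two endpoints differ in the first coordinate and blue otherwise. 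The red graph is properly coloured by the first coordinate and the blue graph by the second, so each colour class has chromatic number at most $\chi(H)-1<\chi(H)$. A graph of chromatic number below $\chi(H)$ contains no copy of $H$, hence neither colour class contains $H$ and $\Gamma\not\to H$.

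Thus it suffices to produce a Ramsey graph $\Gamma\to G$ with $\chi(\Gamma)\le(\chi(H)-1)^2$; since $\chi(H)\ge\chi(G)+1$, it is enough to guarantee $\chi(\Gamma)\le\chi(G)^2$. This is the only point where the hypotheses interact, and it reduces the theorem to what I regard as its core: \emph{every clique-splittable graph $G$ admits a Ramsey graph of chromatic number at most $\chi(G)^2$.} The reliance on clique-splittability is genuine. The same pair-colouring fact, applied to $G$ itself, shows that any $\Gamma\to G$ must satisfy $\chi(\Gamma)\ge(\chi(G)-1)^2+1$, so one is forced into the narrow window $[(\chi(G)-1)^2+1,\ \chi(G)^2]$; for general $G$ one cannot stay inside it, since pinning down the least chromatic number of a Ramsey graph for $G$ is exactly the sort of product/chromatic question governed by Hedetniemi-type phenomena. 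Clique-splittability is what should make an explicit construction available.

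For that construction I would fix the splitting $V(G)=A\dcup B$ with $\omega(G[A]),\omega(G[B])<\omega(G)$ and build $\Gamma$ by amalgamating many copies of $G$ in the spirit of the \nesetril--\roedl\ partite construction, while maintaining throughout a proper vertex colouring of the partial amalgam that uses at most $\chi(G)^2$ colours. Morally one coordinate of each colour should carry information from the $A$-side and the other from the $B$-side, each controllable precisely because the corresponding induced subgraph omits a clique of size $\omega(G)$; the amalgamation is organised so that the final graph still satisfies $\Gamma\to G$. As a sanity check, in the clique case $G=K_{\omega}$ this should recover a Ramsey graph for $K_{\omega}$ of chromatic number close to $(\omega-1)^2+1$, comfortably below $\omega^2$.

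The main obstacle is exactly this last construction: forcing the Ramsey property $\Gamma\to G$ while keeping $\chi(\Gamma)\le\chi(G)^2$. The two demands pull against each other, since gluing in more copies of $G$ to defeat every $2$-colouring tends to drive the chromatic number up, and clique-splittability must be used in an essential way to keep the colouring of the amalgam inside the target window; by contrast the pair-colouring reduction and the lower bound $(\chi(G)-1)^2+1$ are routine. I would treat the degenerate case $\chi(G)=2$ separately, where Observation~\ref{obs::bipartite} already supplies a complete bipartite distinguishing graph.
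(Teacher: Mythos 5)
Your pair--colouring reduction is correct as far as it goes, but the claim you reduce the theorem to --- \emph{every clique-splittable $G$ has a Ramsey graph of chromatic number at most $\chi(G)^2$} --- is false, so the plan cannot be completed. All cliques are clique-splittable, and for cliques the chromatic Ramsey number equals the ordinary Ramsey number: if $\chi(\Gamma)=k$, a proper $k$-colouring is a homomorphism $\Gamma\to K_k$ under which cliques map injectively, so pulling back a $K_n$-free $2$-colouring of $K_k$ (which exists whenever $k<R(K_n)$) shows $\Gamma\not\to K_n$. Hence every $\Gamma\to K_4$ has $\chi(\Gamma)\geq R(K_4)=18>16=\chi(K_4)^2$ (the paper records $\cR_\chi(K_4)=R(K_4)=18$ in its conclusion), and your ``sanity check'' that the clique case should land near $(\omega-1)^2+1$ is exactly where the approach breaks: for $G=K_4$ and a $5$-chromatic $H$ with $\omega(H)=4$ there is simply no $\Gamma\to G$ with $\chi(\Gamma)\leq(\chi(H)-1)^2$, so no two-colouring of any Ramsey graph of $G$ can have both colour classes of chromatic number below $\chi(H)$. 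On top of this, the construction you defer to (``amalgamating copies of $G$ while keeping $\chi\leq\chi(G)^2$'') is only a sketch, so even where the target is not provably false you have not supplied the argument.

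The fix, and the paper's actual route, is to abandon the requirement that \emph{both} colour classes exclude $H$ for chromatic reasons. One first reduces to $\omega(G)=\omega(H)$ via Lemma~\ref{lem::RamseyGraphSameCliqueNo}, and then builds $\Gamma$ from a high-girth $k$-uniform hypergraph of chromatic number $3$ (with $k=\chi(G)$) whose vertices are blown up into copies of a graph $F'$ of clique number strictly less than $\omega(G)$, joined completely along hyperedges. The red class (the union of the $F'$-copies) omits $H$ because its \emph{clique number} is too small; the blue class omits $H$ because the high girth makes it locally $k$-partite while $\chi(H)>k$. Clique-splittability is then used on the positive side, to assemble a monochromatic $G$: either a red $G$ from the two pieces $G_1,G_2$ across a red complete bipartite graph, or a blue $G$ inside a complete $k$-partite blue graph arising from a monochromatic hyperedge. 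So the hypothesis $\chi(G)<\chi(H)$ is exploited only against the blue class, and the clique-number reduction carries the red class --- a mixed criterion your scheme has no room for.
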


\begin{corollary}\label{Cor}
 If $G,H$ are graphs, $\chi(G)\leq 2\omega(G)-2$ and $\chi(G)\neq \chi(H)$, then~$G\not\req H$.
\end{corollary}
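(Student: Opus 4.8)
The plan is to derive Corollary~\ref{Cor} from Theorem~\ref{thm::DifferentChrNo} together with the fact, recalled in the introduction, that clique number is a Ramsey distinguishing parameter \cite{SimpleRamseySameClique}. The first step is to record the structural consequence of the hypothesis: since $\chi(G)\leq 2\omega(G)-2$, the graph $G$ is clique-splittable, which is exactly the sufficient condition noted just before Theorem~\ref{thm::DifferentChrNo}. Hence $G$ qualifies as the clique-splittable input to that theorem, and the whole proof will consist of orienting the hypotheses correctly in a short case analysis.

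I would then distinguish cases according to the clique numbers. If $\omega(G)\neq\omega(H)$, then $G\not\req H$ at once: taking a Ramsey graph of whichever of $G,H$ has the smaller clique number, and with clique number equal to it \cite{SimpleRamseySameClique}, produces a graph that cannot contain, and hence cannot arrow, the other graph. This settles every case with unequal clique numbers. If instead $\omega(G)=\omega(H)=:\omega$, I use $\chi(G)\neq\chi(H)$ and apply Theorem~\ref{thm::DifferentChrNo} in one of two orientations. When $\chi(G)<\chi(H)$, the theorem applies directly to $(G,H)$ because $G$ is clique-splittable. When $\chi(G)>\chi(H)$, I first transfer clique-splittability to $H$: from $\chi(H)<\chi(G)\leq 2\omega(G)-2=2\omega(H)-2$ one gets $\chi(H)\leq 2\omega(H)-2$, so $H$ is clique-splittable as well, and $\chi(H)<\chi(G)$ then lets the theorem apply to the reversed pair $(H,G)$.

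The step I expect to be the main obstacle is precisely the direction $\chi(G)>\chi(H)$, where Theorem~\ref{thm::DifferentChrNo} cannot be invoked on $(G,H)$ since its hypothesis demands that the clique-splittable graph carry the \emph{smaller} chromatic number. The case analysis above is designed to capture this direction from two sides: if the clique numbers differ, clique number already distinguishes $G$ and $H$; and if they coincide, the bound $\chi(G)\leq 2\omega(G)-2$ forces $H$ itself to be clique-splittable, so the theorem applies with the roles of $G$ and $H$ interchanged. Assembling the cases yields $G\not\req H$ whenever $\chi(G)\neq\chi(H)$, as claimed.
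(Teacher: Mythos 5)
Your proposal is correct and follows essentially the same route as the paper: establish that $\chi(G)\leq 2\omega(G)-2$ makes $G$ clique-splittable, reduce to the case $\omega(G)=\omega(H)$ via Lemma~\ref{lem::RamseyGraphSameCliqueNo}, apply Theorem~\ref{thm::DifferentChrNo} directly when $\chi(G)<\chi(H)$, and transfer clique-splittability to $H$ (using $\chi(H)<\chi(G)\leq 2\omega(G)-2=2\omega(H)-2$) to apply the theorem with the roles reversed when $\chi(G)>\chi(H)$. The only cosmetic difference is that you make the unequal-clique-number case explicit, where the paper folds it into a one-line assumption.
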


%

Theorem~\ref{thm::DifferentChrNo} distinguishes pairs of graphs of distinct chromatic number under some splittability condition.
The following theorem requires stronger assumptions but also applies to graphs of the same chromatic number.
If $H$ is a subgraph (proper subgraph) of $G$, we write $H\subseteq G$  ($H\subsetneq G$).
 
 \begin{theorem}\label{thm::SameChrNo}
 Let a connected graph $G$ satisfy the following two properties:
 \begin{enumerate}[label=\upshape\textbf{\arabic{enumi})}]
  \item There is an independent set $S\subset V(G)$ such that $\omega(G - S) < \omega(G)$.
  \item There is a proper $\chi(G)$-vertex-coloring of $G$ in which some two color classes induce a subgraph of a matching.
 \end{enumerate}
Let   $H$ be a connected graph, not isomorphic to $G$,   such that either  $H\subsetneq G$ or   $\chi(H) \geq  \chi(G)$. 
 Then  $G\not\req H$.
 \end{theorem}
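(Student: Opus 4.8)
The plan is to exhibit, in every case, a graph $\Gamma$ that distinguishes $G$ and $H$. First I would dispose of the easy reductions. If $\omega(H)\neq\omega(G)$ then $G\not\req H$ already holds by the clique-number result recalled in the introduction, so I may assume $\omega(H)=\omega(G)=:\omega$. Since $G$ is connected and has an edge, $\omega\geq 2$, and taking the parts $S$ and $V(G)\setminus S$ from property~1 shows $G$ is clique-splittable ($G[S]$ is independent and $\omega(G-S)<\omega$). Hence if $\chi(H)>\chi(G)$, Theorem~\ref{thm::DifferentChrNo} applies directly. By the hypothesis on $H$ the only remaining cases are $\chi(H)=\chi(G)$ and the case $H\subsetneq G$ with $\chi(H)<\chi(G)$. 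Whenever $H\subsetneq G$ a monochromatic $G$ contains a monochromatic $H$, so $\Gamma\to G$ forces $\Gamma\to H$ and the only possible direction is $\Gamma\to H$, $\Gamma\not\to G$; since both structural hypotheses concern $G$, I aim for this direction throughout.

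For the construction I would start from a Ramsey graph $\Gamma\to H$ with $\omega(\Gamma)=\omega(H)=\omega$, which exists by the clique-number theorem cited in the introduction. It then suffices to produce a single $2$-edge-colouring of $\Gamma$ with no monochromatic $G$: then $\Gamma\not\to G$, while $\Gamma\to H$ still yields a monochromatic $H$, so $\Gamma$ distinguishes $G$ and $H$. A monochromatic $G$ is entirely red or entirely blue, so I want neither colour class to contain $G$. The two properties are tailored to block $G$ in the two classes separately: property~2 yields a matching $M\subseteq E(G)$ with $\chi(G-M)\leq\chi(G)-1$ (deleting $M$ makes the two matching classes jointly independent, so they merge), whence a $(\chi(G)-1)$-chromatic colour class cannot contain $G$; property~1 exhibits an independent set meeting every $\omega$-clique, the archetypal way to force the clique number below $\omega$, whence a colour class of clique number at most $\omega-1$ cannot contain $G$ either.

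When $\chi(H)<\chi(G)$ this goes through cleanly. I would colour $E(\Gamma)$ so that the red class has clique number at most $\omega-1$ (by painting blue a sparse, low-chromatic set of edges hitting every $\omega$-clique of $\Gamma$) and the blue class is $(\chi(G)-1)$-chromatic. Since $\omega(H)=\omega$, the red class then contains neither $G$ nor $H$, so every forced monochromatic copy of $H$ lies in blue; and since $\chi(H)\leq\chi(G)-1$, the blue class can host $H$ yet excludes $G$ by its chromatic number. Thus no monochromatic $G$ appears. The work here is to choose $\Gamma$, or modify the \nesetril--\roedl host, so that these two requirements on the colour classes are simultaneously realisable.

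The hard part is the equal-parameter case $\chi(H)=\chi(G)$ (with $\omega(H)=\omega(G)$). Here the two crude devices each fail: a colour class of clique number below $\omega$, or of chromatic number below $\chi(G)$, excludes $G$ but equally excludes $H$, so neither class could carry the monochromatic $H$ that $\Gamma\to H$ forces. The distinction must instead come from the finer difference between $H$ and $G$---that $H$ is a proper subgraph, or at least not isomorphic to $G$---used together with \emph{both} properties at once. I would build $\Gamma$ by a partite amalgamation of copies of $H$ in which property~1 governs the cliques and property~2 governs a designated matching, and design the colouring so that every embedded copy of $G$ is split across the two classes (its clique broken through $S$, or its matching $M$ broken through the amalgam's layering) while copies of the smaller graph $H$ persist monochromatically. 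Making the amalgamation and the matching-breaking colouring interact so that every $G$ but not every $H$ becomes bichromatic is the main obstacle, and the place where the connectivity of $H$ and the non-isomorphism $H\not\cong G$ must be invoked.
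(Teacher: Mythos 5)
Your reduction to $\omega(H)=\omega(G)$ and the dispatch of $\chi(H)>\chi(G)$ via Theorem~\ref{thm::DifferentChrNo} are fine, and your observation that for $H\subsetneq G$ the only possible direction is $\Gamma\to H$, $\Gamma\not\to G$ is correct (the paper realizes it by noting that $H$ then \emph{inherits} both properties --- $\omega(H-S)\leq\omega(G-S)<\omega(G)=\omega(H)$ and the restricted colouring --- and switching the roles of $G$ and $H$). But the proposal has a genuine gap at its core: you have inverted the intended role of hypotheses 1) and 2). They are not devices for \emph{excluding} $G$ from a colour class --- a colour class of chromatic number below $\chi(G)$ or clique number below $\omega(G)$ excludes $G$ with no hypothesis at all, so your reading gives the hypotheses no work to do. They are used \emph{positively}, to force a monochromatic $G$: property 1) means a monochromatic copy of $G_S$ (the graph $G$ with all edges at $S$ deleted, which has clique number $<\omega$) can be completed to a monochromatic $G$ by attaching $s$ suitably ``focused'' vertices, and property 2) means a blue copy of $G$ can be assembled from $m$ blue edges playing the role of the matching plus blue complete multipartite structure for the remaining colour classes. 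Accordingly, in the main case $H\not\subseteq G$, $\chi(H)\geq\chi(G)$, the paper's distinguishing graph satisfies $\Gamma\to G$ and $\Gamma\not\to H$ --- the opposite of the direction you commit to ``throughout.''

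This misdirection is fatal precisely in the case you yourself identify as hard, $\chi(H)=\chi(G)$, $\omega(H)=\omega(G)$, $H\not\subseteq G$, which is the actual content of the theorem. You aim for a $\Gamma$ with $\Gamma\to H$ and $\Gamma\not\to G$, but all the structural information available (the independent set $S$, the matching colouring) concerns $G$, not $H$; nothing in the hypotheses lets you force a monochromatic $H$ while dodging $G$, and your sketch even refers to ``the smaller graph $H$'' in a case where $H$ need not be a subgraph of $G$ and may be larger. The paper's construction for this case is a join of $k-2$ copies of a graph $F$ with $F\epsto G'$ ($G'$ a union of many copies of $G_S$) and one copy of a union of copies of $G$; the coloring ``red inside each block, blue between blocks'' kills $H$ (red blocks have clique number $<\omega$ or are disjoint copies of $G\not\supseteq H$; blue is $(k-1)$-partite while $\chi(H)\geq k$), and a Focusing-Lemma/pigeonhole argument extracts red copies of $G_S$ with monochromatic bipartite connections, from which either a red $G$ (via property 1) or a blue $G$ (via property 2) is reassembled. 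None of this is present in, or recoverable from, your outline, so the theorem's main case remains unproved. (Your ``clean'' subcase $\chi(H)<\chi(G)$, which only occurs when $H\subsetneq G$, is also left as a hand-wave; it is settled simply by checking that $H$ is clique-splittable and citing Theorem~\ref{thm::DifferentChrNo} with the roles swapped.)
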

 
 In Theorems~\ref{thm::DifferentChrNo} and~\ref{thm::SameChrNo} we distinguish pairs of graphs under certain properties.
 Call a graph $G$ \emph{Ramsey isolated} if $G\not\req H$ for any connected graph $H$ not isomorphic to $G$.
 Note that Question~\ref{que:MAIN} asks whether every connected graph is Ramsey isolated or not.
 We apply the previous results to identify large families of Ramsey isolated graphs. 
 The $k$-wheel is the graph on  $k+1$ vertices obtained from a cycle of length $k$ by adding a vertex adjacent to all vertices of the cycle. 
 
 \begin{theorem}\label{thm::PathStar}{\ }
  \begin{enumerate}
   \item{} If $G$ is connected, $\chi(G)=\omega(G)$ and there is a proper $\chi(G)$-vertex-coloring of $G$ in which some two color classes induce a subgraph of a  matching in $G$, then $G$ is Ramsey isolated.
  \item{} Any path and any star are Ramsey isolated.
    \item{} Each connected graph on at most $5$ vertices is Ramsey isolated.
   \end{enumerate}
 \end{theorem}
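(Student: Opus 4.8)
The plan is to obtain part (1) from the results already in hand, namely Theorem~\ref{thm::SameChrNo} and Corollary~\ref{Cor}. First I would verify the two hypotheses of Theorem~\ref{thm::SameChrNo} for such a $G$. Hypothesis (2) is precisely the assumed matching-coloring condition, and the presence of two color classes forces $\chi(G)\geq 2$, hence $\omega(G)=\chi(G)\geq 2$. For hypothesis (1), fix any proper $\chi(G)$-coloring and set $r=\chi(G)=\omega(G)$; since every maximum clique has $r$ vertices receiving $r$ distinct colors, it meets each of the $r$ color classes in exactly one vertex. Deleting a single (independent) color class $S$ therefore removes a vertex from every maximum clique, so $\omega(G-S)\leq r-1<\omega(G)$, giving hypothesis (1). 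Theorem~\ref{thm::SameChrNo} then yields $G\not\req H$ for every connected $H\not\cong G$ with $\chi(H)\geq\chi(G)$, while the complementary range $\chi(H)<\chi(G)$ is covered by Corollary~\ref{Cor}, because $\chi(G)=\omega(G)\leq 2\omega(G)-2$ holds once $\omega(G)\geq 2$. Together these two ranges show $G$ is Ramsey isolated.

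For the stars in part (2) I would first determine the Ramsey class exactly: $\cR(K_{1,n})=\{G:\Delta(G)\geq 2n-1\}$. The inclusion ``$\supseteq$'' is pigeonhole at a vertex of degree at least $2n-1$, and ``$\subseteq$'' follows because a graph with maximum degree at most $2n-2$ admits an edge two-coloring in which each color has maximum degree at most $n-1$ (by an Eulerian-splitting argument), avoiding any monochromatic $K_{1,n}$. Now let $H$ be connected with $H\req K_{1,n}$. Since $\Delta(K_{1,2n-1})=2n-1$, we have $K_{1,2n-1}\in\cR(K_{1,n})=\cR(H)$, so $K_{1,2n-1}\to H$; coloring its $2n-1$ edges with $n$ red and $n-1$ blue makes every monochromatic subgraph a substar of $K_{1,n}$, whence $H\subseteq K_{1,n}$ and, being connected, $H=K_{1,m}$ for some $m\leq n$. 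Finally $\cR(K_{1,m})=\cR(K_{1,n})$ forces $2m-1=2n-1$, so $H\cong K_{1,n}$.

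For the paths $P_k$ (with $k\leq 3$ already being an edge or a star) the scheme is: if $H\req P_k$ then $\chi(H)=2$, since $\chi(H)>2$ is excluded by Observation~\ref{obs::bipartite} and Theorem~\ref{thm::DifferentChrNo} (the bipartite $P_k$ being clique-splittable) and $\chi(H)=1$ is impossible. The crux is to produce one graph $\Gamma$ with $\Gamma\to P_k$ that decomposes into two linear forests, and to use that decomposition as a two-coloring $c_0$; then every monochromatic connected subgraph under $c_0$ is a subgraph of a disjoint union of paths, hence a path. As $\Gamma\in\cR(P_k)=\cR(H)$ gives $\Gamma\to H$, the coloring $c_0$ contains a monochromatic copy of the connected graph $H$, forcing $H$ to be some path $P_j$; equivalence equalizes Ramsey numbers, and since $R(P_j,P_j)$ is strictly increasing in $j$ (Gerenc\'ser and \gyarfas), $j=k$ and $H\cong P_k$. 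I expect the construction of $\Gamma$ to be the main obstacle: arrowing $P_k$ demands density or expansion, whereas decomposing into two linear forests caps the maximum degree at $4$, so I would start from a bounded-degree Ramsey graph for paths (these exist by known size-Ramsey results) and arrange, or slightly modify it to ensure, linear arboricity two.

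For part (3) I would exploit finiteness. If $H\req G$ then $R(H)=R(G)$, so $K_{R(G)}\to H$ and thus $H\subseteq K_{R(G)}$, bounding $|V(H)|\leq R(G)$; hence only finitely many $H$ must be excluded for each of the finitely many connected $G$ on at most $5$ vertices. Every such $G$ other than $C_5$ is perfect, so $\chi(G)=\omega(G)$, and almost all of these also satisfy the matching-coloring condition and are dispatched by part (1), with cliques, paths, and stars already covered by the earlier parts. The remaining cases---chiefly $C_5$, where $\chi>\omega$ and $G$ is not clique-splittable, together with the few perfect graphs such as $K_{2,2,1}$ whose essentially unique optimal coloring has no two classes inducing a matching---I would handle by combining the clique-number and odd-girth distinguishing results, Corollary~\ref{Cor} whenever $\chi(H)\neq\chi(G)$, and explicit distinguishers for the finitely many surviving $H$ (for example, $K_{R(G)}$ separates $G$ from every $H$ on more than $R(G)$ vertices). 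The main obstacle here is organizing this finite but delicate case analysis and supplying ad hoc distinguishing graphs for the parameter-invisible examples like $C_5$.
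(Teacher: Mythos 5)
Part (1) of your proposal is correct and follows the paper's route exactly: verify hypothesis (1) of Theorem~\ref{thm::SameChrNo} by deleting one color class of an optimal coloring, and then dispose of the range $\chi(H)<\chi(G)$ (the paper shows this case simply cannot occur once $\omega(H)=\omega(G)$, since $\chi(H)\geq\omega(H)=\omega(G)=\chi(G)$; your appeal to Corollary~\ref{Cor} also works). For stars your conclusion is right but your key lemma is false: $\cR(K_{1,n})\neq\{G:\Delta(G)\geq 2n-1\}$, because the Eulerian splitting into two classes of maximum degree $n-1$ fails when an Euler tour has odd length --- e.g.\ $K_3\to K_{1,2}$ although $\Delta(K_3)=2$, and more generally $R(K_{1,n})=2n-1$ for even $n$. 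This is repairable without the characterization: your coloring of $K_{1,2n-1}$ already forces $H=K_{1,m}$ with $m\leq n$, and if $m<n$ then $K_{1,2m-1}$ arrows $K_{1,m}$ but not $K_{1,n}$. (The paper argues via minimality of $K_{1,2t-1}$, which is the same idea.)

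The genuine gap is in your treatment of paths. Your plan requires a single graph $\Gamma$ with $\Gamma\to P_k$ \emph{and} linear arboricity $2$, and no such graph is constructed; there are concrete reasons to doubt your proposed source. The natural bounded-degree Ramsey graphs for $P_k$ --- and the ones the paper uses to show $\cR_\Delta(P_m)\leq 4$ --- are $4$-regular graphs of high girth, and every $4$-regular graph has linear arboricity at least $3$ (a $d$-regular graph on $n$ vertices has $dn/2$ edges while a linear forest has at most $n-1$, so $la>d/2$). Any $\Gamma$ with $la(\Gamma)\leq 2$ has at most $2(|V(\Gamma)|-1)$ edges, so a $2$-coloring only guarantees one class $|V(\Gamma)|-1$ edges, which by Erd\H{o}s--Gallai does not even force a monochromatic $P_4$ (the class could be a union of stars); so ``slightly modifying'' a bounded-degree Ramsey graph to have linear arboricity two is not a routine step, and it is unclear the required $\Gamma$ exists at all. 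The paper avoids this entirely by comparing the degree-Ramsey parameter: $\cR_\Delta(P_m)\leq 4$ (via $4$-regular high-girth graphs) while $\cR_\Delta(H)\geq 2\Delta(H)-1\geq 5$ for any tree $H$ with $\Delta(H)\geq 3$, with non-trees killed by Lemma~\ref{lem::extremalNo} and other paths by exact Ramsey numbers. Finally, for part (3) your reduction to finitely many candidates $H$ (via $|V(H)|\leq R(G)$) is sound, but the content of the proof lies precisely in the cases you defer: distinguishing $C_4$ from $C_4+e$ (the paper uses $\cR_\delta(C_4)=3$ versus $\cR_\delta(C_4+e)=1$), and distinguishing $K_{2,3}$ and $W_4$ from every connected $H$ on at least $6$ vertices with the same Ramsey number, clique number, odd girth and chromatic number --- this is Lemmas~\ref{lem:W4Isolated} and~\ref{lem:K23Isolated} and the supporting appendix lemmas ($\cR_\delta(Z_4)=\cR_\delta(Z_5)=1$, the $K_{3,13}-e$ analysis). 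As written, your part (3) is a plan rather than a proof.
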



 \begin{remark}\label{smallDistinguishingGraphs}
  If $F$ distinguishes $G$ and $H$ then $F$ has at least $\min\{R(G),R(H)\}$ vertices.
  The distinguishing graphs used in the proof of Theorem~\ref{thm::PathStar} are rather large, except for stars.
  However, for most pairs $G,H$ of distinct connected graphs on at most $5$ vertices there is a distinguishing graph on $\min\{R(G),R(H)\}$ vertices.
 \end{remark}

 A tree $T$ on $k$ vertices is called \textit{balanced} if deleting some edge splits $T$ into components of order at most $\lceil \frac{k+1}{2} \rceil$ each.
 The extremal function ${\rm ex}(n,H)$ is the largest number of edges in an $n$-vertex graph with no copy of $H$.
 The Erd\H{o}s-S\'os-Conjecture states that ${\rm ex}(n,T)\leq \frac{k-2}{2}n$ for any tree $T$ on $k$ vertices.
 We remark that recently, Ajtai, Koml\'os, Simonovits and Szemer\'edi announced a proof of the conjecture for large $k$~\cite{Simonovits1,Simonovits2,Simonovits3}.
 We state here a much weaker conjecture:
 
\begin{conjecture} \label{tree-conjecture}
 There is a positive $\epsilon$ and an integer $n_\epsilon$ such that  ${\rm ex} (n, T) \leq \frac{k-1 -\epsilon}{2} n$ for any tree on $k$ vertices and  $n > n_\epsilon$.
\end{conjecture}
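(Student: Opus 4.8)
The plan is to treat small and large trees separately, taking advantage of the fact that the claimed inequality is strictly weaker than the \erdoes--S\'os bound ${\rm ex}(n,T)\le\frac{k-2}{2}n$: it permits an extra additive slack of $\frac{1-\epsilon}{2}n$ edges. Before splitting, it is worth recording the elementary unconditional bound and why it falls short. If an $n$-vertex graph has more than $(k-2)n$ edges then its degeneracy is at least $k-1$, so it contains a subgraph of minimum degree at least $k-1$; such a subgraph contains every tree on $k$ vertices by a greedy embedding that adds the vertices of $T$ in tree order, since the image of the parent of each newly added vertex has at least $k-1$ neighbours, of which at most $k-2$ have been used. Hence ${\rm ex}(n,T)\le(k-2)n=\frac{2k-4}{2}n$ for every tree $T$ on $k$ vertices. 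This exceeds the target $\frac{k-1-\epsilon}{2}n$ by essentially a factor of two, and the whole point is to lower the forcing threshold on the average degree from the degeneracy value $2k-4$ down to something below $k-1$. That is exactly the substance of \erdoes--S\'os, so no purely local argument will suffice; indeed, the star $K_{1,k-1}$, whose extremal graphs are $(k-2)$-regular, already shows that the true answer sits just below $\frac{k-1}{2}n$ and is attained only through an average-degree (global) argument rather than a minimum-degree one.

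For large $k$ I would simply invoke the announced Ajtai--Koml\'os--Simonovits--Szemer\'edi proof of the \erdoes--S\'os conjecture. It provides a threshold $k_0$ such that for every $k\ge k_0$, every tree $T$ on $k$ vertices, and every sufficiently large $n$ one has ${\rm ex}(n,T)\le\frac{k-2}{2}n\le\frac{k-1-\epsilon}{2}n$ whenever $\epsilon\le1$. It therefore remains to prove the bound for each fixed $k<k_0$. Crucially, for a fixed $k$ there are only finitely many trees on $k$ vertices, so it suffices to produce, for each individual such tree $T$, some $\epsilon_T>0$ and a threshold $n_T$ with ${\rm ex}(n,T)\le\frac{k-1-\epsilon_T}{2}n$ for all $n>n_T$. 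Taking $\epsilon=\min_T\epsilon_T$ and $n_\epsilon$ the maximum of these finitely many thresholds together with the large-$k$ threshold (allowing $n_\epsilon$ to depend on $k$) then yields a single $\epsilon$ that works across all $k$.

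It thus remains to handle, for bounded $k$, each fixed tree $T$. For the many families where \erdoes--S\'os is known exactly --- paths (\erdoes--Gallai), stars, spiders, brooms, caterpillars, and trees of small diameter --- any $\epsilon_T\le1$ is admissible and nothing further is needed. For a fixed tree outside these families the goal is only the approximate bound ${\rm ex}(n,T)\le\bigl(\tfrac{k-2}{2}+\eta\bigr)n$ with $\eta<\tfrac{1-\epsilon}{2}$, which I would attack with the regularity method: apply Szemer\'edi's regularity lemma to a graph $G$ with slightly more than $\frac{k-1-\epsilon}{2}n$ edges, pass to the reduced graph, locate a sufficiently dense regular structure or connected dense piece there, and embed the bounded tree $T$ into it via a standard tree-embedding lemma, the point being that any fixed tree embeds into a dense enough regular pair. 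The main obstacle, and where the whole argument stands or falls, is precisely this embedding for arbitrary bounded trees: reproducing for a tree that is simultaneously spread out the average-degree efficiency that is trivial for the star is the genuine core of \erdoes--S\'os. If an unconditional asymptotic \erdoes--S\'os bound for every fixed tree is not available in the literature, it is this bounded-$k$ regime, not the large-$k$ one, that must be supplied, and I expect it to be the hardest part of the proof.
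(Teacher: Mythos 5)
First, a point of framing: the statement you were asked to prove is stated in the paper as Conjecture~\ref{tree-conjecture} and is \emph{not} proved there --- the authors explicitly present it as an open (weakened) form of the \erdoes--S\'os conjecture and only use it conditionally in Theorem~\ref{thm::Trees}. So there is no proof in the paper to compare against, and any complete argument you gave would be new mathematics.

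Your proposal, however, is not a complete argument, and the gap sits exactly where you yourself locate it. The reduction to finitely many trees for each fixed $k<k_0$ is fine as bookkeeping, but the bounded-$k$ case is then discharged only by a sketch (``apply the regularity lemma, find a dense regular piece, embed $T$''), and that sketch does not engage the actual difficulty: a dense $\epsilon$-regular pair of density $d$ on parts of size $m$ has average degree about $dm$, which for a graph with $\frac{k-1-\epsilon}{2}n$ edges need not exceed $k-1$ in any useful local sense, and no standard tree-embedding lemma embeds an arbitrary $k$-vertex tree into a structure whose only guarantee is average degree just below $k-1$ --- that \emph{is} the \erdoes--S\'os conjecture, as you concede. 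The unconditional bound you correctly derive, ${\rm ex}(n,T)\le (k-2)n$, is off by a factor of two, and to my knowledge no general unconditional bound of the form $\frac{(1+o(1))k}{2}\,n$ for all trees is in the literature; if it were, the authors would not have needed to state the conjecture. Two smaller issues compound this: (i) your large-$k$ case rests on the Ajtai--Koml\'os--Simonovits--Szemer\'edi result, which the paper cites only as announced manuscripts, so even that half is not on settled ground; and (ii) as literally stated the conjecture asks for a single threshold $n_\epsilon$ depending only on $\epsilon$, whereas your construction produces a threshold depending on $k$ (through the finitely many $n_T$ and through the ``sufficiently large $n$'' in the large-$k$ regime), so the quantifiers do not quite match the claim. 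In short, the proposal is an honest reduction plus an accurate diagnosis of what is missing, but the missing piece is the entire content of the conjecture.
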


\begin{theorem}\label{thm::Trees}
 If Conjecture~\ref{tree-conjecture} is true then any two trees of different order are not Ramsey equivalent.
 If $T_k$ is a balanced tree on $k$ vertices and $T_{\ell}$ is any tree on $\ell\geq k+1$ vertices, then  $T_k \not\req T_{\ell}$.
 \end{theorem}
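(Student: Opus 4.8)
The plan is to prove both statements by the same scheme: given trees $T_k$ and $T_\ell$ with $\ell\ge k+1$, I would exhibit a single graph $\Gamma$ that \emph{forces the smaller tree but avoids the larger one}, i.e.\ $\Gamma\to T_k$ and $\Gamma\not\to T_\ell$; such a $\Gamma$ distinguishes $T_k$ and $T_\ell$, whence $T_k\not\req T_\ell$. The basic tool for the forcing side is a density lemma: if $\Gamma$ has $n$ vertices and more than $2\,{\rm ex}(n,T_k)$ edges, then in any $2$-coloring the majority color class has more than ${\rm ex}(n,T_k)$ edges and hence contains $T_k$, so $\Gamma\to T_k$. The tool for the avoiding side is a host whose edges admit a $2$-coloring in which each color class is $T_\ell$-free.

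For the avoiding side I would use two orthogonal partitions into cliques of order $\ell-1$. Arrange $n=(\ell-1)^2$ vertices in an $(\ell-1)\times(\ell-1)$ array and let $\Gamma$ be the rook's graph; color the row edges with one color and the column edges with the other, so that each color class is a disjoint union of cliques $K_{\ell-1}$. Every component of each class has only $\ell-1<\ell$ vertices, so neither class contains the connected graph $T_\ell$, giving $\Gamma\not\to T_\ell$; a direct count yields $e(\Gamma)=(\ell-2)n$, and taking many disjoint copies makes $n$ arbitrarily large at the same density. (When $R(T_k)<R(T_\ell)$ one may instead take $\Gamma=K_{R(T_k)}$, which arrows $T_k$ yet, having fewer than $R(T_\ell)$ vertices, fails to arrow $T_\ell$.) Part 1 now follows from Conjecture~\ref{tree-conjecture}: applied to $T_k$ it gives $2\,{\rm ex}(n,T_k)\le (k-1-\epsilon)n$ for $n>n_\epsilon$, while $\ell\ge k+1$ gives $e(\Gamma)=(\ell-2)n\ge (k-1)n>(k-1-\epsilon)n$. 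Hence $e(\Gamma)>2\,{\rm ex}(n,T_k)$, so $\Gamma\to T_k$ by the forcing lemma, and $T_k\not\req T_\ell$ for all pairs of trees of different order.

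For Part 2 the conjecture is unavailable, so the entire difficulty is to force the \emph{balanced} tree $T_k$ unconditionally inside the same sparse, $T_\ell$-avoidable host, and this is exactly where I would use balancedness. Writing $T_k=A\cup\{uv\}\cup B$ with $|A|,|B|\le h:=\lceil (k+1)/2\rceil$, each half is a tree on at most $h\le\ell-1$ vertices, so it fits inside a single $(\ell-1)$-clique of the rook graph. The plan is to embed $A$ inside one row-clique and $B$ inside one column-clique that share a single coordinate, so that the two halves occupy \emph{orthogonal} directions, are automatically disjoint except at the shared cell, and are joined by one cross edge playing the role of $uv$. In any $2$-coloring one color receives at least $\tfrac12 e(\Gamma)=\tfrac{\ell-2}{2}n$ edges; averaging over the $\ell-1$ rows and $\ell-1$ columns should produce a row and a column that are dense in a common color, into whose majority subgraphs the small trees $A$ and $B$ are embedded, after which aligning a monochromatic row with a monochromatic column through a monochromatic connector edge is a pigeonhole step over the $(\ell-1)^2$ cells.

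The main obstacle is precisely this simultaneous embedding. The naive estimate ``the majority color of a $K_{\ell-1}$ has average degree about $(\ell-2)/2$'' only guarantees a subgraph of minimum degree roughly $\tfrac14\ell$, whereas a greedy embedding of a half-tree on up to $h\approx \ell/2$ vertices wants minimum degree close to $h$, and the two halves compete for the same monochromatic neighbourhoods. Closing this factor-two gap is the crux, and I expect balancedness to enter there quantitatively: either by proving the Erd\H{o}s--S\'os-type estimate ${\rm ex}(n,T_k)<\tfrac{\ell-2}{2}n$ for balanced $T_k$ directly (which would let the forcing lemma finish Part 2 exactly as the conjecture finishes Part 1), or by a tailored embedding that spreads the bodies of $A$ and $B$ over genuinely disjoint regions—distinct rows for $A$, distinct columns for $B$—so that the forbidden set never exhausts a vertex's monochromatic neighbourhood.
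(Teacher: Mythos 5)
Your Part~1 is correct and is in substance the paper's argument: the paper routes it through the second condition of Lemma~\ref{lem::extremalNo} (the Cartesian product $F\times F$ of an extremal $T_\ell$-free graph with itself), and your disjoint union of rook's graphs $K_{\ell-1}\times K_{\ell-1}$ with the row/column coloring is exactly that construction instantiated with $F=K_{\ell-1}$, combined with the majority-color counting from the first condition. The estimate $2\,{\rm ex}(N,T_k)\leq (k-1-\epsilon)N<(\ell-2)N=e(\Gamma)$ for $N>n_\epsilon$ closes it.

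Part~2, however, has a genuine gap, and you have correctly located but not closed it. The paper's host is \emph{not} the rook's graph: it is $F=L(B)$ for $B$ a $k$-regular \emph{bipartite graph of girth greater than $k$}, so that $F$ is an edge-disjoint union of two families of $k$-cliques with \emph{no short cycles in the clique-intersection pattern}. The argument then runs: $|E(F)|=(k-1)|V(F)|$, so the majority color class contains a subgraph $G$ of maximum average degree with $\delta(G)\geq\lceil\frac{k-1}{2}\rceil$ (taken in the whole color class, not clique by clique --- this already avoids the ``$\ell/4$'' loss you worry about); if $\Delta(G)=k-1$ then $G$ is $(k-1)$-regular and $T_k$ embeds greedily, and otherwise a vertex $x$ with $\deg_G(x)\geq k$ has majority-color edges into \emph{both} of its cliques $C_1,C_2$. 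One then splits $T_k$ at the balancing edge $vw$, places $v$ at $x$, sends $w$ into $C_2$ and the remaining neighbours of $v$ into $C_1$, and embeds the two halves greedily; balancedness gives $|V(T_1)|-1,\ \sum_{i\geq 2}|V(T_i)|\leq\lceil\frac{k-1}{2}\rceil\leq\delta(G)$, and the girth condition guarantees the two greedy embeddings explore \emph{disjoint sets of cliques}, so each greedy step only has to avoid the at most $\lceil\frac{k-1}{2}\rceil$ vertices of its own half. Your host $K_{\ell-1}\times K_{\ell-1}=L(K_{\ell-1,\ell-1})$ destroys precisely this last ingredient: every row clique meets every column clique, the underlying bipartite graph has girth $4$, and so the two half-embeddings can collide and the greedy process can get stuck (a vertex may need to avoid up to $k-1$ used vertices with only $\lceil\frac{k-1}{2}\rceil$ guaranteed neighbours). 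Your two proposed escape routes are not available: the Erd\H{o}s--S\'os-type bound ${\rm ex}(n,T_k)<\frac{\ell-2}{2}n$ for balanced trees is not something you can ``prove directly'' here (it is essentially the open conjecture the theorem is trying to avoid), and the ``orthogonal rows/columns'' embedding founders on the fact that the adversary controls the coloring, not you. The fix is to replace $K_{\ell-1,\ell-1}$ by a $k$-regular bipartite graph of girth exceeding $k$ (which exists by~\cite{SachsRegularGirth} plus the bipartite double cover) and take its line graph.
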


 The next theorem makes use of multicolor Ramsey numbers. Here $R(G_1, G_2, G_3)$ is the minimum integer $n$ such that any coloring of the edges of $K_n$ in red, blue, and green  has a red $G_1$, a blue $G_2$, or a green $G_3$.
 We write $G\not\req_k H$ if $G$ and $H$ are not Ramsey equivalent in $k$ colors, i.e., there is a graph $\Gamma$ such that any $k$-coloring of its edges contains a monochromatic $G$  (we write $\Gamma\to_k G$)  and there is such a coloring avoiding monochromatic $H$, or vice versa.

 \begin{theorem}\label{thm::MoreColors}
If $G$ and $H$ are graphs then $G\not\req H$ if one of the following conditions holds:
\begin{itemize}
 \item There is a graph $F$ such that $R(G,G,F) < R(H,H,F)$.
 \item $G\subseteq H$ and there is $k\geq 2$ with $G\not\req_k H$.
\end{itemize}

 \end{theorem}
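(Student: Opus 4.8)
The plan is to handle the two sufficient conditions separately, in each case exhibiting an explicit distinguishing graph.

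\textbf{First condition.} Here the idea is to convert the two-against-one structure of $R(G,G,F)$ directly into a $2$-colour arrowing statement. Assume $R(G,G,F)<R(H,H,F)$ and set $n=R(G,G,F)$. Two facts follow immediately: since $n\ge R(G,G,F)$ we have $K_n\to(G,G,F)$, and since $n\le R(H,H,F)-1$ there is a red/blue/green colouring $\phi$ of $K_n$ with no red $H$, no blue $H$, and no green copy of $F$. Let $J$ be the graph of green edges of $\phi$ (so $F\not\subseteq J$), and let $\Gamma$ be the graph on $V(K_n)$ of all non-green edges, so that $E(K_n)=E(\Gamma)\dcup E(J)$. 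I claim $\Gamma$ distinguishes $G$ and $H$. To see $\Gamma\to G$, take any red/blue colouring $\psi$ of $E(\Gamma)$ and extend it to a colouring $c$ of $K_n$ by colouring every edge of $J$ green; applying $K_n\to(G,G,F)$ yields a red $G$, a blue $G$, or a green $F$, and the last is impossible because the green edges of $c$ are exactly $J$, which contains no copy of $F$. The resulting monochromatic $G$ lies in $\Gamma$ and is $\psi$-monochromatic. To see $\Gamma\not\to H$, colour $E(\Gamma)$ by the restriction of $\phi$; any monochromatic $H$ would be a red or blue copy of $H$ of $\phi$ in $K_n$, contradicting the choice of $\phi$. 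Hence $\Gamma\to G$ and $\Gamma\not\to H$, so $G\not\req H$.

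\textbf{Second condition.} First I would strip the statement to its real content. Because $G\subseteq H$, every monochromatic $H$ contains a monochromatic $G$, so $\Gamma\to_k H$ implies $\Gamma\to_k G$; hence $G\not\req_k H$ can only be witnessed by a graph $\Gamma$ with $\Gamma\to_k G$ and $\Gamma\not\to_k H$. Moreover $\Gamma\to_k G$ already gives $\Gamma\to_2 G$, since a $2$-colouring is in particular a $k$-colouring and so must contain a monochromatic $G$. For $k=2$ the statement is the conclusion itself, so the content lies in $k\ge 3$. The task therefore reduces \emph{purely} to producing a graph $\Gamma'$ that is a $2$-colour Ramsey graph of $G$ and yet admits a $2$-colouring with no monochromatic $H$, the seed being the $k$-colouring $c_0$ of $\Gamma$ in which each of the $k$ colour classes is $H$-free.

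The approach I would take is colour reduction by induction on $k$: from a graph with $\Gamma\to_k G$ and an $H$-free $k$-colouring, build a graph $\Gamma'$ with $\Gamma'\to_{k-1}G$ and an $H$-free $(k-1)$-colouring, and iterate down to $k=2$. Arrowing $G$ is never the difficulty, as it is inherited for free at each step. All the work is in the $H$-free colouring: one identifies two of the $k$ colours while replacing $\Gamma$ by a structured enlargement (a blow-up of $\Gamma$, or gadgets attached along the two classes being merged) so that $c_0$ descends to an $H$-free colouring with one fewer colour, while the enlargement still retains enough of the arrowing to keep $\Gamma'\to_{k-1}G$.

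The hard part is exactly this merging step. There is a genuine tension: to avoid $H$ in few colours one wants a sparse graph, yet one must keep enough structure to arrow $G$, so $\Gamma'$ cannot simply contain $\Gamma$ but must be built anew from the $k$-coloured seed. Naively identifying two colour classes can create a monochromatic $H$ out of edges that previously carried different colours, and the construction must prevent this \emph{globally} across all of $\Gamma'$, not merely inside a single gadget. This is where the hypothesis $G\subseteq H$ does the real work, and where, for general (in particular bipartite) $H$ where off-the-shelf signal senders are unavailable, a careful direct construction is needed. Controlling $H$ everywhere at once, rather than locally, is the delicate point on which the whole argument turns.
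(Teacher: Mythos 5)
Your treatment of the first condition is correct and is essentially the paper's own argument: set $n=R(G,G,F)$, take the $3$-colouring of $K_n$ with no monochromatic $H$ in the first two colours and no green $F$, delete the green edges, and argue exactly as you do. Nothing to add there.

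The second condition is where the gap is. You correctly reduce the problem to descending from $k$ colours to $k-1$ colours, but your proposed descent --- merging two colour classes while enlarging $\Gamma$ by blow-ups or gadgets so that the merged colouring stays $H$-free --- is never carried out; you explicitly flag the merging step as ``the hard part'' and leave it open, so for $k\geq 3$ nothing is actually proved. Moreover, the difficulty is located in the wrong place. The paper's descent neither merges colours nor enlarges the graph: given an $H$-free $k$-colouring $c$ of a graph $\Gamma$ with $\Gamma\to_k G$, it \emph{deletes} the colour-$1$ class, obtaining $\Gamma'\subseteq\Gamma$. The $H$-free $(k-1)$-colouring of $\Gamma'$ is then inherited for free by restricting $c$, and the entire content is the step you declare to be automatic, namely that $\Gamma'\to_{k-1}G$. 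This is exactly where $G\subseteq H$ does its work: if $\Gamma'$ had a $G$-free $(k-1)$-colouring $c'$, then $c'$ would also be $H$-free, so re-inserting the deleted edges with colour $1$ yields a $k$-colouring $c''$ of $\Gamma$ that is $H$-free and whose colours $2,\dots,k$ contain no monochromatic $G$; running the same deletion argument on each colour class of such an $H$-free $k$-colouring (or else finishing immediately by induction) shows every colour must contain a monochromatic $G$ because $\Gamma\to_k G$, which contradicts the $G$-freeness of colours $2,\dots,k$. Hence $\Gamma'\to_{k-1}G$ and $\Gamma'\not\to_{k-1}H$, and induction on $k$ finishes the proof. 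In short: the correct mechanism is deletion of a colour class plus a contradiction argument exploiting $G\subseteq H$, not a gadget or signal-sender construction, and as written your proof of the second bullet is incomplete.
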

 
 
 The paper is structured as follows.
 In Section~\ref{sec::overview} we provide a more detailed summary of known results on Ramsey equivalence, as well as some observations.
 In Section~\ref{sec::Lemmas} we give known and our preliminary results that will be used in proving the main theorems.
 Section~\ref{Proofs} contains the proofs of the main results, the Appendix provides lemmas used to prove Remark~\ref{smallDistinguishingGraphs} and other routine lemmas.
 Finally, Section~\ref{Conclusions} contains conclusions and open questions.
 We refer the reader to~\cite{West} for all standard notations in graph theory and to Figure~\ref{fig::5Vert} for notations used for small graphs.
 We omit floors and ceilings as long as the meaning is clear from context.
 We also assume that the graphs under consideration have at least one edge.


\section{Overview and Observations}\label{sec::overview}

 As mentioned in the introduction, two connected non-isomorphic graphs $G$ and $H$ are not Ramsey equivalent if one of them is a clique~\cite{Fox_EquivClique}, or if $\omega(G)\neq \omega(H)$~\cite{SimpleRamseySameClique} or when $g_o(G)\neq g_o(H)$~\cite{RamseyOddGirth}.
 Note that it is an open question whether $\max\{\girth(F): F\to G\}=\girth(G)$ for all graphs $G$~\cite{Nesetril_HighGirthChrNumber}, where $\girth(G)$ is the length of a shortest cycle in $G$.
 There are several other results about Ramsey classes that are useful in checking whether some two given graphs are Ramsey equivalent or not.
 Note that two graphs are Ramsey equivalent if and only if they have the same set of minimal Ramsey graphs.
 A graph $\Gamma$ is {\em minimal Ramsey} for $H$ if $\Gamma \to H$, but $\Gamma'\not\to H$ for any proper subgraph $\Gamma'$ of $\Gamma$. 
 We need to define a few graphs to state the known results: $P_n$, $C_n$ is the path, cycle on $n$ vertices, respectively,  $H_{t,d}$ is a graph on $t+1$ vertices such that one vertex has degree $d$ and the other vertices induce $K_t$, $K_{a,b}$ is the complete bipartite graph with parts of sizes $a$ and $b$, respectively.
 It was shown in~\cite{RamseyWithoutBipartite} that if $H$ does not contain $K_{a,b}$, $a,b\geq 3$ or $1=a\leq b\leq 2$, then there is $\Gamma\in \cR(H)$ such that $\Gamma$ does not contain $K_{a,b}$.
 In general, for an integral graph parameter $s$, we define
 \[
  \cR_s = \min\{s(\Gamma): \Gamma\text{ is minimal Ramsey for } H\}.
 \]
 If $\cR_s(H) \neq \cR_s(G)$ then clearly $G\not\req H$. 
 When $s=\delta$, the minimum degree, a number of results have been obtained:  $\cR_\delta(K_t)=(t-1)^2$,~\cite{BEL_Ramsey, Fox_MinDegreeRamseyMinimal},   $\cR_\delta(H_{t,1}) = t-1$,~\cite{Fox_EquivClique},   $\cR_\delta( H_{t,d}) = d^2$, for $2 \leq d \leq t$,~\cite{Grinshpun_RamsesyMinimal}, 
 $\cR_\delta(K_{a,b}) = 2 \min\{a,b\} -1$,~\cite{Fox_MinDegreeRamseyMinimal},  $\cR_\delta(H) =1$ if $H$ is a tree,~\cite{Szabo_RamseyMinBipartite}, or when $H$ is $K_{t, t}$ plus a pending edge,~\cite{Fox_EquivClique},  $\cR_\delta(C_n)=3$ for even $n\geq 4$,~\cite{Szabo_RamseyMinBipartite}.  
 Burr, \erdoes,~\lovasz~\cite{BEL_Ramsey} conjectured  that for every integer $\chi$ there is $H$ with $\chi(H)=\chi$ and $R_\chi(H)=(\chi-1)^2+1$.
 This conjecture has been proven by Zhu~\cite{zhu_fractionalHedetniemi}.
 Burr \textit{et al.} proved that $R_{\chi}(G)\leq R(G)$ \cite{BEL_Ramsey}.
 In~\cite{DegreeRamseyTrees} the authors asked whether $\cR_\Delta(H)$ is bounded by a function depending on maximum degree $\Delta(H)$ only.
 They prove that if $H$ is a tree, then $2\Delta(H) - 1 \leq \cR_\Delta(H) \leq 4(\Delta(H)-1)$.  
 We observe that for a non-bipartite $H$,   $\cR_\Delta(H)\geq 2\Delta(H)$, see Lemma~\ref{lem::maxDegree}.
  In~\cite{DensityThreshold} a threshold $t_H$ is given, such that almost all graphs with density larger than $t_H$ are Ramsey for $H$ and almost all with smaller density are not.
  The introduction of~\cite{RamseyInfinite} covers several results on   graphs with  infinitely many minimal Ramsey graphs.
 
 It remains unclear whether a connected graph could or could not be Ramsey equivalent to its subgraph.
 Note that an edge-transitive graph $H$ is not Ramsey equivalent to any of its subgraphs because coloring the edges of $\Gamma-e$ for a Ramsey minimal graph $\Gamma$ of $H$ in two colors, gives a monochromatic copy of $H-e'$ for some edge $e'$.
 Since $H-e'$ is isomorphic to $H-e''$ for any two edges, $H-e'$ contains any proper subgraph $H'$ of $H$. We see that $\Gamma-e \to H'$, but $\Gamma-e\not\to H$.


\section{Preliminary Lemmas}\label{sec::Lemmas}

The following lemma is an easy generalization of the Focusing Lemma in~\cite{Fox_EquivClique}.

 \begin{lemma}[Focusing Lemma,~\cite{Fox_EquivClique}]\label{lem::focusing}
 Let $(A\cup  B, E)$ be a bipartite graph with a $2$-edge-coloring. Then there is a subset $B'\subseteq B$, $|B'|\geq |B|/2^{|A|}$, such that for each $a\in A$ all edges from $a$ to $B'$ are of the same color.
 \end{lemma}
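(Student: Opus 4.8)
The plan is to prove this by a direct pigeonhole (encoding) argument over the vertices of $B$, recording how each $b\in B$ ``sees'' the whole set $A$ under the given coloring.

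Fix the two colors, say red and blue. For each $b\in B$ I would define a \emph{signature} $\sigma_b\colon A\to\{\text{red},\text{blue}\}$ by setting $\sigma_b(a)$ to be the color of the edge $ab$ whenever $ab\in E$, and to be red (any fixed default would do) whenever $ab\notin E$. The point of the default value is to make every signature a \emph{total} function on $A$, so that there are at most $2^{|A|}$ possible signatures irrespective of which edges are actually present; this is precisely the small adjustment needed to pass from the complete-bipartite case to an arbitrary bipartite graph.

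Since each of the $|B|$ vertices is assigned one of at most $2^{|A|}$ signatures, the pigeonhole principle provides a single signature $\sigma$ shared by a set $B'\subseteq B$ with $|B'|\geq |B|/2^{|A|}$, and I would take this $B'$.

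To finish, fix $a\in A$ and any $b\in B'$ with $ab\in E$. By definition $\sigma_b(a)$ is then the color of $ab$, and since $\sigma_b=\sigma$ this color equals $\sigma(a)$. Hence every edge from $a$ into $B'$ has color $\sigma(a)$, so they are all of one color, as required. The only delicate step is the handling of non-edges through the default value; once the signatures are set up as total functions, the counting and the verification are immediate, and no estimate beyond the single pigeonhole bound is needed.
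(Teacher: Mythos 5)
Your proof is correct and complete. The paper itself states this lemma with a citation to Fox \emph{et al.} and gives no proof, noting only that it is ``an easy generalization of the Focusing Lemma'' there; the classical argument iterates over the vertices of $A$, at each step keeping a majority color class of the current subset of $B$ and losing at most a factor of $2$, whereas you obtain the same bound in one shot by pigeonholing on the $2^{|A|}$ possible color signatures. The two arguments are essentially the same count packaged differently. Your device of completing each signature to a total function on $A$ via a fixed default value on non-edges is exactly what is needed to pass from the complete bipartite setting to an arbitrary bipartite graph, and it is harmless: the conclusion only concerns edges that are actually present, and for those the shared signature forces the color, so nothing is lost.
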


 
\begin{lemma}[\cite{SimpleRamseySameClique}]\label{lem::RamseyGraphSameCliqueNo}
For any  graph $G$ there is a   graph $F\in\cR(G)$ with $\omega(G)=\omega(F)$. 
\end{lemma}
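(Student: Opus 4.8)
The inclusion $\omega(F)\ge \omega(G)$ is automatic for any $F\in\cR(G)$, since $F\to G$ forces $F$ to contain a copy of $G$; the entire content of the lemma is the reverse inequality, i.e.\ building a Ramsey graph that does \emph{not} create any clique larger than those already present in $G$. My plan is to reproduce the \emph{partite construction} of \nesetril\ and \roedl~\cite{SimpleRamseySameClique}. Fix a linear order $v_1<\cdots<v_n$ on $V(G)$. I work with \emph{pictures}: graphs $\mathbb{P}$ whose vertex set is partitioned into $n$ independent classes $X_1,\dots,X_n$, and I only care about \emph{transversal} copies of $G$, i.e.\ copies placing $v_i$ into $X_i$ for every $i$. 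The goal becomes: construct a picture $\mathbb{P}$ such that every $2$-coloring of $E(\mathbb{P})$ contains a monochromatic transversal copy of $G$, while $\omega(\mathbb{P})=\omega(G)$. Viewing $\mathbb{P}$ as an ordinary graph then yields the desired $F$; keeping the classes independent is what prevents transversality from costing extra cliques.

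The construction is an iteration $\mathbb{P}^0,\mathbb{P}^1,\dots,\mathbb{P}^n$ that ``processes'' one class per step. I would start with $\mathbb{P}^0$ equal to a large number $M$ of disjoint transversal copies of $G$, so that $\omega(\mathbb{P}^0)=\omega(G)$. At step $k$ I build $\mathbb{P}^{k}$ as a union of many copies of $\mathbb{P}^{k-1}$ that all share the sub-picture obtained by deleting class $X_k$, and that differ only in their class-$k$ vertices and the edges incident to them. A suitable bipartite Ramsey argument between the class-$k$ vertices and their neighborhoods---exactly the kind of focusing supplied by Lemma~\ref{lem::focusing}---guarantees that in any $2$-coloring of $E(\mathbb{P}^{k})$ one finds a copy of $\mathbb{P}^{k-1}$ in which the colors on all edges meeting class $k$ are ``focused''. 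Iterating over $k=1,\dots,n$ and feeding the focused copies into the induction, every $2$-coloring of $\mathbb{P}^n$ eventually yields a fully monochromatic transversal copy of $G$, so $\mathbb{P}^n\to G$.

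The decisive point---and the whole reason the lemma is nontrivial---is clique control, and it is here that the ``share everything but class $k$'' design pays off. I claim by induction that $\omega(\mathbb{P}^{k})=\omega(G)$. Indeed, let $K$ be a clique of $\mathbb{P}^{k}$. Since $X_k$ is an independent class, $K$ contains at most one vertex $x$ of $X_k$. The remaining vertices of $K$ all lie in the common shared sub-picture $\mathbb{P}^{k-1}-X_k$, and $x$ (if present) lies in a single copy $c$ of $\mathbb{P}^{k-1}$ that also contains that shared sub-picture; hence $K$ is contained in the single copy $c\cong \mathbb{P}^{k-1}$. By the inductive hypothesis $\omega(K)\le\omega(\mathbb{P}^{k-1})=\omega(G)$, and since $\mathbb{P}^k\supseteq G$ we get equality. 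Thus no amalgamation step ever enlarges the clique number.

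I expect the main obstacle to be the simultaneous bookkeeping in the iteration: one must set up the copies at step $k$ so that (i) the focusing/Ramsey property is genuinely inherited---i.e.\ the monochromatic object extracted at the end is a transversal copy of $G$ and not merely of some intermediate picture---and (ii) the ``copies differ only in class $k$'' structure is maintained so the clique argument above applies at \emph{every} level. Getting the quantitative sizes $M$ and the index sets right (these are towers of Ramsey numbers) and verifying that the base/focusing step composes correctly across all $n$ classes is the delicate part; the clique-preservation itself, once the amalgamation is arranged over independent classes, is then immediate from the one-vertex-per-class observation above.
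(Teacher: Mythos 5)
The first thing to say is that the paper does not prove Lemma~\ref{lem::RamseyGraphSameCliqueNo} at all: it is imported as a black box from \nesetril{} and \roedl~\cite{SimpleRamseySameClique}, so there is no in-paper argument to compare against, and what you are attempting is a reconstruction of the cited partite construction. Your framing is right in several respects: only the inequality $\omega(F)\leq\omega(G)$ is at issue, the partite/transversal set-up is the correct vehicle, and your clique-control paragraph (an independent class meets a clique in at most one vertex, hence every clique lies inside a single amalgamated copy of $\mathbb{P}^{k-1}$) is sound for the amalgamation you describe.

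The Ramsey half of the sketch, however, has concrete gaps. First, your base picture has $n=|V(G)|$ classes with $M$ disjoint transversal copies of $G$. Even if every amalgamation step succeeded perfectly, the best possible outcome is a copy of $\mathbb{P}^0$ in which the color of an edge depends only on the pair of classes it joins; this induces a $2$-coloring of $E(K_n)$ on the class set, and you then need a copy of $G$ among the classes all of whose class-pairs are one color. That forces the number of classes to be at least $R(G)$, not $n$: the actual construction starts from $N=R(G)$ classes and one transversal copy of $G$ for each copy of $G$ in $K_N$. Second, the Focusing Lemma (Lemma~\ref{lem::focusing}) is too weak to drive the induction: it yields monochromatic stars whose color may still depend on the apex vertex, whereas the partite construction needs the genuinely canonical conclusion that \emph{all} edges between class $k$ and class $j$ in the extracted copy receive the same color --- that is what the partite lemma (a Hales--Jewett-type statement) provides and what your sketch silently assumes. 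Third, your amalgamation is set up backwards: in \cite{SimpleRamseySameClique} the copies of $\mathbb{P}^{k-1}$ overlap \emph{in} class $k$ (along a system of sub-copies produced by the partite lemma) and are disjoint elsewhere; with your ``share everything except class $k$'' blow-up, the subset of class $k$ that focusing returns need not contain any complete copy of the original class-$k$ attachment pattern, so you cannot even exhibit a copy of $\mathbb{P}^{k-1}$ on which to recurse. It is telling that when the present paper runs a focusing-based argument of exactly this flavor (the proof of Theorem~\ref{thm::DifferentChrNo}), it must add a further device --- a high-girth hypergraph of chromatic number greater than $2$ --- to convert ``monochromatic stars'' into a monochromatic copy of the target graph; some such engine is missing from your outline.
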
 
 
We write $V(\HH)$ and $E(\HH)$ for the vertex set, respectively the edge set, of a graph or hypergraph $\HH$.
A hypergraph is $k$-uniform if every hyperedge has size $k$.
The girth of a hypergraph is the smallest number $k\geq 2$ of distinct vertices $v_0,\ldots, v_{k-1}$ and distinct hyperedges $E_0,\ldots, E_{k-1}$ such that for each $i$, $0\leq i\leq k-1$, $\{v_i,v_{i+1}\}\subseteq E_i$ (indices taken modulo $k$).
The independence number of a hypergraph is the size of a largest set of vertices which does not contain a hyperedge completely.
The chromatic number of a hypergraph is the smallest number of colors in a proper vertex coloring, i.e., a coloring without monochromatic hyperedges.
 
\begin{lemma}[\cite{hypergraphSmallIndependence}]\label{lem::HypergraphGirthIndependence}
For any integers $k,g\geq 2$ and any  $\epsilon>0$ there is an integer $n$ and  a $k$-uniform hypergraph on $n$ vertices  with  girth at least $g$ and independence number less than $\epsilon n$.  
\end{lemma}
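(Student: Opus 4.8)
The plan is to prove this by the probabilistic deletion method, in the spirit of Erd\H{o}s's construction of graphs with large girth and large chromatic number. I would take the random $k$-uniform hypergraph $\HH$ on vertex set $\{1,\dots,n\}$ in which each of the $\binom{n}{k}$ possible hyperedges is present independently with probability $p=p(n)$, to be tuned. I would then argue that, for a suitable $p$ and all large $n$, with positive probability $\HH$ simultaneously has few short cyclic configurations (cycles of length $\ell$ with $2\le \ell<g$, in the sense of the girth definition above) and small independence number. Deleting one vertex from each short cycle then produces the hypergraph claimed in the lemma.

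For the tuning I would set $p=n^{-(k-1)+\alpha}$ with a small constant $\alpha\in(0,1/g)$, and control two expectations. First, a cycle of length $\ell$ consists of $\ell$ distinct hyperedges spanning at most $\ell(k-1)$ vertices, so the number of potential $\ell$-cycles is $O(n^{\ell(k-1)})$ and the expected number of present $\ell$-cycles is $O(n^{\ell(k-1)}p^\ell)=O(n^{\ell\alpha})$. Summing over $2\le \ell<g$, the expected number $X$ of short cycles is $O(n^{(g-1)\alpha})=o(n)$, since $(g-1)\alpha<1$. Second, for $s=\lceil \epsilon n/2\rceil$ the expected number $Y$ of independent sets of size $s$ satisfies $\binom{n}{s}(1-p)^{\binom{s}{k}}\le \exp(s\ln n-p\binom{s}{k})$; as $\binom{s}{k}=\Theta(n^k)$ for fixed $\epsilon,k$, the subtracted term is of order $p\cdot n^k=n^{1+\alpha}$, which dominates $s\ln n=\Theta(n\ln n)$ because $n^\alpha\gg\ln n$. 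Hence $E[Y]\to 0$.

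To finish, for large $n$ we have $E[X]\le n/8$ and $E[Y]<1/4$, so by Markov's inequality $P(X>n/2)\le 1/4$ and $P(Y\ge 1)<1/4$. Thus with probability exceeding $1/2$ there is a realization $\HH$ with at most $n/2$ short cycles and with $\alpha(\HH)<\epsilon n/2$. Removing one vertex from each short cycle gives $\HH'$: this deletes at most $n/2$ vertices, destroys every short cycle, and leaves a $k$-uniform hypergraph on $n'\ge n/2$ vertices with girth at least $g$. The crucial point is that deleting vertices cannot raise the independence number, since any independent set of $\HH'$ lies inside the surviving vertex set and contains no hyperedge of $\HH$, hence is already independent in $\HH$; therefore $\alpha(\HH')\le\alpha(\HH)<\epsilon n/2\le\epsilon n'$, as required.

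The main obstacle is the tension between the two demands on $p$: a small independence number wants $p$ large, so that every sizeable vertex set is hit by a hyperedge, while a large girth wants $p$ small, so that short cyclic configurations are rare. The computation above shows there is a usable window, namely any $\alpha\in(0,1/g)$, in which the expected number of short cycles stays $o(n)$ yet the density is high enough to force $\alpha(\HH)<\epsilon n/2$; confirming that this window is nonempty and that the $o(n)$ bound on short cycles leaves at least $n/2$ vertices after deletion is the heart of the argument.
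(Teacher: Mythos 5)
The paper does not prove this lemma at all---it is quoted as a known result of Erd\H{o}s and Hajnal, with only the easy corollary about chromatic number proved afterwards---so there is no internal proof to compare against. Your probabilistic deletion argument is correct and is essentially the classical proof of this statement. The parameter window checks out: with $p=n^{-(k-1)+\alpha}$, a potential $\ell$-cycle is determined by at most $\ell(k-1)$ vertices, so the expected number of cycles of length less than $g$ is $O(n^{(g-1)\alpha})=o(n)$ once $\alpha<1/(g-1)$ (your stricter choice $\alpha<1/g$ suffices), while $p\binom{s}{k}=\Theta(n^{1+\alpha})$ swamps $s\ln n$ for any $\alpha>0$, so the first-moment bound on independent sets of size $\epsilon n/2$ goes to zero. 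The union bound via Markov, the deletion of one vertex per short cycle (removing with it all incident hyperedges, so the hypergraph stays $k$-uniform and every short cycle is destroyed), and the observation that vertex deletion cannot increase the independence number are all handled correctly; the final renormalization $\alpha(\HH')<\epsilon n/2\le\epsilon n'$ uses $n'\ge n/2$ as it should. The only point worth stating explicitly is the convention that deleting a vertex discards every hyperedge containing it, which your independence-monotonicity step implicitly relies on.
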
 

From this lemma one easily derives the following well-known result.

\begin{lemma}[\cite{hypergraphSmallIndependence}]\label{lem::HypergraphGirthChromatic}
For any integers $k,g, \chi \geq 2$  there is a $k$-uniform hypergraph    with  girth at least $g$ and chromatic number at least $\chi$.
\end{lemma} 
\begin{proof}
 Let $\epsilon < \frac{1}{\chi}$ and let $\HH$ denote a $k$-uniform hypergraph with girth at least $g$ and independence number at most $\epsilon |V(\HH)|$, which exists by Lemma~\ref{lem::HypergraphGirthIndependence}.
 In any $\chi$-coloring of $V(\HH)$ there is a set of at least $\frac{1}{\chi}|V(\HH)| > \epsilon|V(\HH)|$ vertices of the same color.
 Hence this color class induces an edge of $\HH$.
 Thus the coloring is not proper and $\chi(\HH) > \chi$.
\end{proof}

For graphs $F$, $G$ and for $\epsilon>0$ we write $F\epsto G$ if for any set $S\subseteq V(F)$ with $|S|\geq \epsilon |V(F)|$, we have $F[S]\to G$.
Here $F[S]$ denotes the subgraph of $F$ induced by the vertices in $S$.

\begin{lemma}[\cite{Fox_EquivClique}]\label{lem::epsRamsey}
 For any $\epsilon>0$ and any graph $H$, there is a graph $F$ with $\omega(F)=\omega(H)$ and $F\epsto H$.
\end{lemma}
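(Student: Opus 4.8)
The plan is to build $F$ by combining a Ramsey graph of $H$ that already has the correct clique number with a high-girth hypergraph of small independence number, placing one disjoint copy of the Ramsey graph along each hyperedge. The small independence number will force every large induced subgraph to contain a full copy, yielding $F\epsto H$, while the girth condition will keep cliques from spanning several copies, yielding $\omega(F)=\omega(H)$.

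First I would invoke Lemma~\ref{lem::RamseyGraphSameCliqueNo} to fix a graph $F_0$ with $F_0\to H$ and $\omega(F_0)=\omega(H)$, and set $k=|V(F_0)|$; I may assume $\epsilon<1$. Next, applying Lemma~\ref{lem::HypergraphGirthIndependence} with uniformity $k$ and girth $g=4$, I obtain a $k$-uniform hypergraph $\HH$ on $n$ vertices with girth at least $4$ and independence number less than $\epsilon n$. I then build $F$ on the vertex set $V(\HH)$ by fixing, for each hyperedge $E$, a bijection from $V(F_0)$ to the $k$ vertices of $E$ and placing the corresponding copy of $F_0$ on $E$; the edge set of $F$ is the union of these copies. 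Since girth at least $4$ forces any two hyperedges to meet in at most one vertex, no pair of vertices lies in two hyperedges, so the copies never compete for the same pair and $F$ is well defined.

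For the property $F\epsto H$, I would take any $S\subseteq V(F)$ with $|S|\ge\epsilon|V(F)|=\epsilon n$. As $\epsilon n$ exceeds the independence number of $\HH$, the set $S$ is not independent and therefore contains some hyperedge $E$ entirely. Then $F[S]$ contains, as a subgraph, the copy of $F_0$ placed on $E$; since arrowing is monotone under adding vertices and edges, $F_0\to H$ gives $F[S]\to H$. This is the easy half.

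The main point, and the step I expect to be the real obstacle, is the clique bound $\omega(F)=\omega(H)$. The lower bound is immediate, as each copy of $F_0$ contributes a clique of size $\omega(H)$. For the upper bound I would show that any clique $W$ of $F$ lies inside a single hyperedge, whence $|W|\le\omega(F_0)=\omega(H)$. Suppose not: by girth at least $4$ each pair of $W$ lies in a unique hyperedge, and the resulting blocks $B_i=W\cap E_i$ pairwise meet in at most one vertex, have size at least $2$, cover every pair of $W$, and use at least two distinct hyperedges, each block being a proper subset of $W$. Choosing two blocks $B,B'$ that share a vertex $v$, a vertex $u'\in B\setminus\{v\}$, a vertex $w'\in B'\setminus\{v\}$, and the block $B''$ covering the pair $\{u',w'\}$, one checks from the pairwise intersection bound that $v,u',w'$ are three distinct vertices and $B,B',B''$ three distinct hyperedges forming a cycle of length $3$ in $\HH$, contradicting girth at least $4$. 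The care needed here is precisely in verifying that $B''$ differs from both $B$ and $B'$ and that the three vertices are distinct. Once this is established, combining it with the previous paragraph completes the proof.
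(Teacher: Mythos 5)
Your proposal is correct and follows essentially the same route as the paper: take a Ramsey graph $F_0$ of $H$ with the right clique number via Lemma~\ref{lem::RamseyGraphSameCliqueNo}, overlay copies of it on the hyperedges of a $|V(F_0)|$-uniform, girth-$4$, small-independence-number hypergraph from Lemma~\ref{lem::HypergraphGirthIndependence}, and use the independence bound to get $F\epsto H$. The only difference is that you spell out the verification that girth at least $4$ confines every clique of $F$ to a single hyperedge, a step the paper asserts without proof; your argument for it is sound.
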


\begin{proof}
 Let $F'$ be a graph such that $F'\to H$ and $\omega(F')=\omega(H)$.  Such a  graph exists by Lemma~\ref{lem::RamseyGraphSameCliqueNo}.
 Further let $\HH$ denote a $|V(F')|$-uniform hypergraph of girth at least $4$ and no independent set of size $\epsilon |V(\HH)|$, which exists by Lemma~\ref{lem::HypergraphGirthIndependence}.
 Construct a graph $F$ by placing a copy of $F'$ on the vertices of each hyperedge of $\HH$.
 Then $F$ is a   graph on $|V(\HH)|$ vertices with $\omega(F)=\omega(F')=\omega(H)$. 
Each vertex set of size at least $\epsilon |V(\HH)|$ induces a hyperedge in $\HH$ and thus a copy of $F'$ in $F$ which arrows $H$.
\end{proof}

We have the following corollary, since any graph which arrows $H$ contains $H$.

\begin{lemma}\label{lem::SparseGraphRichCopies}
 For any $\epsilon>0$ and any graph $H$, there is a graph $F$ with $\omega(F)= \omega(H)$ and each set of $\epsilon|V(F)|$ vertices in $F$ containing a copy of $H$.
\end{lemma}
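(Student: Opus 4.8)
The plan is to derive this directly from Lemma~\ref{lem::epsRamsey}, exactly as the phrase ``We have the following corollary'' in the surrounding text suggests. First I would invoke Lemma~\ref{lem::epsRamsey} with the given $\epsilon$ and the given graph $H$ to obtain a graph $F$ satisfying $\omega(F)=\omega(H)$ and $F\epsto H$. This already supplies the required clique-number condition, so the only task remaining is to convert the $\epsto$-property, which is phrased in terms of arrowing, into the desired statement about genuine copies of $H$.

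The key step is the elementary observation that arrowing implies containment: if a graph $\Gamma$ satisfies $\Gamma\to H$, then colouring all edges of $\Gamma$ with a single colour produces a monochromatic copy of $H$, and hence $\Gamma$ itself contains a copy of $H$. I would apply this with $\Gamma=F[S]$ for an arbitrary vertex set $S\subseteq V(F)$ of size at least $\epsilon|V(F)|$. By the definition of $F\epsto H$ we have $F[S]\to H$, so $F[S]$ contains a copy of $H$, which is precisely to say that $S$ spans a copy of $H$ in $F$. Since this conclusion holds for every such $S$, every set of $\epsilon|V(F)|$ vertices of $F$ contains a copy of $H$, while the equality $\omega(F)=\omega(H)$ is inherited directly from Lemma~\ref{lem::epsRamsey}.

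I do not expect any genuine obstacle, since the statement is essentially a one-line consequence of the previous lemma combined with the monochromatic-colouring trick. The only point I would record explicitly is the monotonicity of the set-size condition: any superset of a set of size $\epsilon|V(F)|$ again has size at least $\epsilon|V(F)|$, so it suffices to verify the claim for sets of size exactly $\epsilon|V(F)|$, and larger sets then inherit a copy of $H$ from any such subset they contain.
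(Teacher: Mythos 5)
Your proposal is correct and matches the paper's own derivation: the paper states the lemma as a corollary of Lemma~\ref{lem::epsRamsey} precisely via the observation that any graph which arrows $H$ contains $H$, which is your monochromatic-colouring argument applied to $F[S]$. Nothing further is needed.
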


 \begin{lemma}\label{lem::maxDegree}
 If a graph $H$ is not bipartite then $\cR_\Delta(H)\geq 2\Delta(H)$. The lower bound is tight. 
\end{lemma}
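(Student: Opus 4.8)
The plan is to establish the lower bound by its contrapositive and to treat tightness afterwards. Write $d=\Delta(H)$. Since every graph arrowing $H$ contains a minimal Ramsey subgraph whose maximum degree is no larger, we have $\cR_\Delta(H)=\min\{\Delta(\Gamma):\Gamma\to H\}$, so it suffices to show that every $\Gamma$ with $\Delta(\Gamma)\le 2d-1$ admits a $2$-edge-coloring with no monochromatic copy of $H$, i.e.\ that $\Gamma\not\to H$.

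First I would fix a maximum cut of $\Gamma$, that is a bipartition $V(\Gamma)=X\cup Y$ maximizing the number of edges with one endpoint in $X$ and one in $Y$. The standard exchange argument shows that in such a cut every vertex $v$ has at least as many neighbors across the cut as on its own side; hence $v$ has at most $\lfloor\deg(v)/2\rfloor\le\lfloor(2d-1)/2\rfloor=d-1$ neighbors on its own side. Color the edges inside $X$ and the edges inside $Y$ red, and the edges across the cut blue.

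Now the red graph has maximum degree at most $d-1<\Delta(H)$, so it cannot contain $H$, because any copy of $H$ would need a vertex of degree $\Delta(H)$. The blue graph is bipartite by construction, so it cannot contain the non-bipartite graph $H$ either. Thus no color class contains $H$, giving $\Gamma\not\to H$ and therefore $\cR_\Delta(H)\ge 2d=2\Delta(H)$. This is precisely the step where non-bipartiteness enters: for bipartite $H$ the blue class could contain $H$, and one only recovers the weaker bound $2\Delta(H)-1$ consistent with the tree estimates quoted above.

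For tightness it suffices to produce a single non-bipartite $H$ together with a graph of maximum degree exactly $2\Delta(H)$ that arrows it. The natural test case is $H=K_3$, where the bound reads $\cR_\Delta(K_3)\ge 4$ and one needs a graph $\Gamma$ with $\Delta(\Gamma)=4$ and $\Gamma\to K_3$; a minimal Ramsey subgraph of such a $\Gamma$ then has maximum degree exactly $4$ by the lower bound. I expect this to be the main obstacle: the lower bound above is soft, whereas meeting it requires an explicit bounded-degree Ramsey construction, in the spirit of the signal-sender gadgets of Burr, \erdoes, and \lovasz~\cite{BEL_Ramsey}, followed by a verification that every $2$-coloring of its edges produces a monochromatic triangle.
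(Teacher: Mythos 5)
Your proof of the lower bound is correct and is essentially the paper's own argument: take a maximum cut, observe that each side then induces a subgraph of maximum degree at most $\Delta(H)-1$, and color the cut edges with one color (giving a bipartite class) and the remaining edges with the other (giving a class of too small maximum degree). The reduction to minimal Ramsey graphs via subgraph monotonicity of $\Delta$ is also fine.

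The tightness half is a genuine gap. You do not prove it; you only describe what a proof would require and defer the construction to an unspecified signal-sender-style argument. Worse, the witness you propose is the wrong one: taking $H=K_3$ would require exhibiting a graph of maximum degree $4$ that arrows $K_3$, and no such graph exists --- Kinnersley, Milans, and West showed that $\cR_\Delta(K_3)=5$, i.e.\ every graph of maximum degree at most $4$ admits a $2$-edge-coloring with no monochromatic triangle --- so this route cannot be completed no matter how clever the gadget. The paper sidesteps the difficulty entirely by choosing a different witness: for each $\Delta\geq 3$, let $H$ be the graph obtained from $K_{1,\Delta}$ by adding an edge between two leaves, so that $H$ is non-bipartite with $\Delta(H)=\Delta$, and take $\Gamma=K_{2\Delta+1}$, which has maximum degree exactly $2\Delta$. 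The verification that $K_{2\Delta+1}\to H$ is a short pigeonhole argument (any vertex has $\Delta$ neighbors joined to it in one color; either that neighborhood contains an edge of the same color, yielding $H$, or it is a clique in the other color, from which a copy of $H$ in the other color is easily extracted). Since the statement only asserts tightness for \emph{some} non-bipartite $H$, replacing $K_3$ by this $H$ and supplying that verification is what is needed to close the gap.
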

\begin{proof}
 Let $\Delta=\Delta(H)$ and suppose $F$ is a graph with $\Delta(F)\leq 2\Delta-1$.
 It is sufficient to prove that $F\not\to H$.
 Consider a partition $V_1\dot\cup V_2$ of $V(F)$ with the maximum number of edges between $V_1$ and $V_2$.
 If there is a vertex $v\in V_1$ with at least $\Delta$ neighbors in $V_1$, then $v$ has at most $\Delta-1$ neighbors in $V_2$.
 Thus the partition $(V_1\setminus\{v\})\dot\cup (V_2\cup\{v\})$ has at least one more edge between the parts than the original partition, a contradiction.
 Hence both $F[V_1]$ and $F[V_2]$ have maximum degree at most $\Delta-1$.
 Color all edges between $V_1$ and $V_2$ red and all other edges blue.
 Then the red subgraph is bipartite and the blue subgraph has maximum degree at most $\Delta-1$.
 Thus $F\not\to H$.
 
 The lower bound is tight since $K_{2\Delta+1}\to H$, $\Delta\geq 3$, where $H$ is the graph of maximum degree $\Delta$ obtained from $K_{1,\Delta}$ by adding an edge between two leaves.
\end{proof}


%
\begin{lemma}\label{lem::extremalNo}
Let $G$ and $H$ be graphs.\\
If ${\rm ex}(n, G) < {\rm ex}(n,H)/2$ or if $H$ is connected and ${\rm ex}(n, G) < \sqrt{n}\,{\rm ex}(\sqrt{n},H)$, then $G\not\req H$.

In particular, if $G$ is a forest and $H$ contains a cycle, then $G \not\req H$.
\end{lemma}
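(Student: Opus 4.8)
The plan is to exhibit, under either hypothesis, a single graph $F$ that is $H$-free and yet satisfies $F\to G$. Such an $F$ automatically distinguishes $G$ and $H$: since $F$ contains no copy of $H$ at all, no $2$-edge-coloring of $F$ can produce a monochromatic $H$, so $F\not\to H$; combined with $F\to G$ this gives $G\not\req H$. Both hypotheses are, morally, the statement that $G$ is forced by fewer edges than $H$ is, and the whole game is to convert an edge surplus in an $H$-free host into a monochromatic copy of the sparser graph $G$. It suffices in each case that the relevant inequality hold for a single value of $n$, since only one graph $F$ is needed.

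For the first hypothesis I would take $F$ to be an extremal $H$-free graph on $n$ vertices, so $|E(F)|={\rm ex}(n,H)$. In any $2$-edge-coloring of $F$ the majority color class is a subgraph on the same $n$ vertices with at least ${\rm ex}(n,H)/2>{\rm ex}(n,G)$ edges, and a graph on $n$ vertices with more than ${\rm ex}(n,G)$ edges contains a copy of $G$. Hence $F\to G$, while $F$ is $H$-free, and we are done. This step is routine.

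For the second hypothesis, connectivity of $H$ is what allows the host to be enlarged without creating copies of $H$. Writing $m=\sqrt n$, let $F_0$ be an extremal $H$-free graph on $m$ vertices and let $F$ be the disjoint union of $m$ copies of $F_0$; then $|V(F)|=n$, $|E(F)|=\sqrt n\,{\rm ex}(\sqrt n,H)>{\rm ex}(n,G)$, and because $H$ is connected any copy of $H$ in $F$ would lie inside a single copy of $F_0$, so $F$ inherits $H$-freeness and $F\not\to H$. It remains to show $F\to G$, and \emph{this is the crux}. The naive route — pass to the majority color class and invoke ${\rm ex}(n,G)$ — only delivers a monochromatic $G$ under the stronger bound ${\rm ex}(n,G)<\tfrac12\sqrt n\,{\rm ex}(\sqrt n,H)$, and the same loss reappears if one argues blockwise, since for a \emph{connected} $G$ a monochromatic copy must fall inside one block and an adversary can split each block evenly into two $G$-free subgraphs. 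Removing this factor of $2$ is the main obstacle; it is also precisely what the application to trees needs, where ${\rm ex}(n,\cdot)$ is linear and a stray factor of $2$ would be fatal. I expect that the disjoint union of extremal graphs is simply the wrong host here, and that one must instead build an $H$-free graph of near-extremal edge density that \emph{additionally} carries a Ramsey property for $G$ (for instance by grafting a suitable $H$-free Ramsey gadget for $G$ into an otherwise edge-maximal $H$-free scaffold), so that some color class is forced to contain $G$ with only $\approx\sqrt n\,{\rm ex}(\sqrt n,H)$ edges in play. Verifying that such a host is genuinely $H$-free, and not merely sparse, is the delicate part.

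Finally, the ``in particular'' clause follows from the first hypothesis alone, so $H$ need not be connected there. If $G$ is a forest then ${\rm ex}(n,G)=O(n)$. If $H$ contains a cycle, let $C_g$ be a shortest cycle of $H$; since $C_g\subseteq H$, every $C_g$-free graph is $H$-free, whence ${\rm ex}(n,H)\ge {\rm ex}(n,C_g)$. Classical high-girth constructions furnish $n$-vertex graphs of $\girth$ greater than $g$ — hence $C_g$-free — with $\Omega(n^{1+1/g})$ edges, so ${\rm ex}(n,H)$ is superlinear. Therefore ${\rm ex}(n,H)/2>{\rm ex}(n,G)$ for all sufficiently large $n$, the first hypothesis applies, and $G\not\req H$.
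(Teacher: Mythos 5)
Your first case and the ``in particular'' clause are correct and essentially identical to the paper's argument. The gap is in the second case, and it is exactly where you flagged it: you identify the lost factor of $2$ as the crux but do not close it, and your guess about how to close it points in the wrong direction. You assume the distinguishing graph must be $H$-free; it need not be. It suffices to exhibit \emph{one} $2$-edge-coloring of the host with no monochromatic $H$, and relaxing to this weaker requirement is precisely what recovers the factor of $2$.

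The paper's construction: let $F$ be an extremal $H$-free graph on $n$ vertices with ${\rm ex}(n,H)$ edges, and let $F^\ast = F \times F$ be the Cartesian product, so $F^\ast$ has $n^2$ vertices and $2n\,{\rm ex}(n,H)$ edges --- twice as many as your disjoint union of $n$ copies of $F$. Now the majority color class in any $2$-edge-coloring of $F^\ast$ has at least $n\,{\rm ex}(n,H) > {\rm ex}(n^2,G)$ edges and hence contains $G$, with no factor of $2$ lost; note this does not require $G$ to land in a single block, so your worry about an adversary splitting blocks evenly is moot. On the other side, $F^\ast$ may well contain copies of $H$ (a copy could mix ``horizontal'' and ``vertical'' edges), but coloring an edge $\{(u,v),(x,y)\}$ red if $u=x$ and blue otherwise makes each color class a vertex-disjoint union of $n$ copies of $F$, which contains no $H$ because $H$ is connected and $F$ is $H$-free. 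Hence $F^\ast \to G$ and $F^\ast \not\to H$. So the ``$H$-free Ramsey gadget'' you speculate about is unnecessary; the only missing idea is to distinguish via a good coloring of a non-$H$-free host whose edge set decomposes into two $H$-free spanning subgraphs.
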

\begin{proof}
 Assume first that  ${\rm ex}(n,G)<{\rm ex}(n,H)/2$.  Let  $F$ be a graph  on $n$ vertices with ${\rm ex}(n,H) \geq 2\,{\rm ex}(n,G)+1$ edges without a copy of $H$.
 In any $2$-coloring of the edges of $F$ one of the color classes contains at least ${\rm ex}(n,G)+1 $ edges, and thus a copy of $G$.
 Hence $F\to G$, but $F\not\to H$.
	
 Assume now that $H$ is connected and ${\rm ex}(n^2, G) < n\,{\rm ex}(n,H)$.  Let $F$ be a graph on $n$ vertices and ${\rm ex}(n,H)$ edges not containing $H$.
 Let $F^\ast = F \times F$ be the Cartesian product of $F$ with itself, that is, $V(F^\ast) = V(F) \times V(F)$ and $\{(u,v),(x,y)\} \in E(F^\ast)$ if and only if $u = x$ and $vy \in E(F)$ or $v=y$ and $ux \in E(F)$.
 Then $F^\ast$ has $n^2$ vertices and $2n\,{\rm ex}(n,H)$ edges. 
 In any  $2$-edge-coloring of $F^\ast$ there is a color class with at least $n\,{\rm ex}(n,H)$ edges.
 This color class contains a copy of $G$, thus  $F^\ast\to G$.
 On the other hand, we can color the edges of  $E(F^\ast)$   without creating monochromatic copies of $H$ by  coloring  an edge $\{(u,v),(x,y)\}$ red if $u=x$ and blue otherwise.
 Note that each color class is a vertex disjoint union of $n$ copies of $F$ and thus does not contain $H$, as $H$ is connected.
 Thus  $F^\ast \not\to H$.
 \medskip
 
 For the second part of the statement let $G$ be any forest and $H$ be any graph with a cycle $C$.
 We have ${\rm ex}(n,G) \leq |V(G)|n$ and ${\rm ex}(n,H) \geq {\rm ex}(n,C) \geq \Omega(n^{1+\frac{1}{|V(C)|-1}})$~\cite{LargeGraphsHighGirth}.
 Hence for sufficiently large $n$ we have ${\rm ex}(n,G) < {\rm ex}(n,C)/2$ and thus $G\not \req H$ by the first part of the Lemma.
\end{proof}

Let $Z_1$ and $Z_4$ denote the graphs obtained from $C_4$ by adding two, respectively three, pendent edges at some vertex,  let  $Z_5$ denote the graph obtained from $K_{2,3}$ by adding a pendent edge at a vertex of degree $3$, see Figure~\ref{fig:K23Special}.
	
	 \begin{figure}[htbp]
 \centering
 \includegraphics{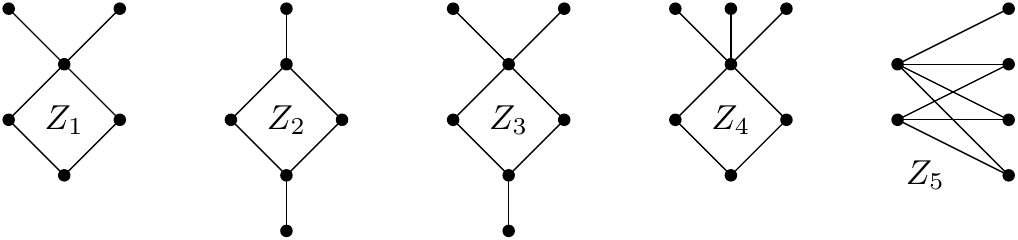}
 \caption{The graphs $Z_1$, $Z_2$, $Z_3$, $Z_4$ and $Z_5$.}
 \label{fig:K23Special}
\end{figure}

  The next technical lemmas we prove in the Appendix.  
  
   \edef\appendixLemmas{\thelemma} 
  \begin{lemma}\label{lem:minDegG4}
  $\cR_{\delta}(Z_4)=1$.
  \end{lemma}

 \begin{lemma}\label{lem:NoMonoKab}
 In any $2$-edge-coloring of $K_{3,13}-e$ without monochromatic $Z_5$ the vertex of degree $2$ is incident to exactly one red and one blue edge.
\end{lemma}
 
\begin{lemma}\label{lem::minDegG5}
 $\cR_{\delta}(Z_5)=1$.
\end{lemma}

\begin{lemma}\label{lem:W4Isolated}
 If $H$ is connected and has at least $6$ vertices, then $H\not\req W_4$.
\end{lemma}
 
\begin{lemma}\label{lem:K23Isolated}
 If $H$ is connected and has at least $6$ vertices, then $H\not\req K_{2,3}$.
\end{lemma}

\section{Proofs of Theorems}\label{Proofs}


\subsection{Proof of Theorem~\ref{thm::DifferentChrNo}}

If $\chi(G)=2$ then $G$ is not Ramsey equivalent to any graph $H$ of higher chromatic number by Observation~\ref{obs::bipartite}.
So, assume that $\chi(G)\geq 3$.

 Let $\omega = \omega(G)$, $k=\chi(G)$, $\chi(H)>k$.   
 We assume $\omega(G)=\omega(H)$, otherwise $G\not\req H$ by Lemma~\ref{lem::RamseyGraphSameCliqueNo}. 
 Let $V(G) = V_1\cup V_2$, $V_1 \cap V_2 = \emptyset$,  such that $G_1=G[V_1]$ and $G_2=G[V_2]$ each  have clique number less than $\omega$.
Let $G^\star$ be a vertex disjoint union of $G_1$ and $G_2$, in particular $\omega(G^\star) < \omega$.
 We shall construct a graph $\Gamma$ such that $\Gamma\to G$ and $\Gamma \not\to H$.

 The building blocks of $\Gamma$ are  a hypergraph $\HH$ and graphs $F$ and $F'$ such that:
 \begin{itemize}
 \item{} $\HH$ is a $3$-chromatic, $k$-uniform hypergraph of girth at least $|V(H)|+1$.
  It exists by Lemma~\ref{lem::HypergraphGirthChromatic}. 
 \item{}  $F$ is  a graph such that $\omega(F) <\omega$  and   every set of at least $\epsilon_1|V(F)|$ vertices in $F$ contains a  copy of $G^\star$, where $\epsilon_1=2^{-|V(\HH)|}$.
  Such a graph exists by Lemma~\ref{lem::SparseGraphRichCopies}.
 \item{} $F'$ is a graph such that $\omega(F')=\omega(F)<\omega$ and  $F'\epsto F$ for $\epsilon = 2^{- |V(\HH)||V(F)|}$. Such a graph exists by    Lemma~\ref{lem::epsRamsey}.
 \end{itemize}
 Note that  $|V(\HH)|$  depends on  $|V(H)|$ and $\chi(G)$;     $|V(F)|$ in turn depends on $|V(\HH)|$,  $\omega(G)$,  and $G$, so $|V(F)|$ depends only on $H$ and $G$. So, $\epsilon$ and $\epsilon_1$ 
 are constants depending on $H$ and $G$.

 
 Construct a   graph $\Gamma$ by replacing the vertices $v_1, \ldots, v_n$  of $\HH$ with  pairwise  vertex disjoint   copies  of $F'$ on vertex sets $V_1, \ldots, V_n$  and placing  a complete bipartite graph between two copies of $F'$ if and only if  the corresponding vertices belong to the same hyperedge of $\HH$, see Figure~\ref{fig:clique-splittable_1}.
 
 \begin{figure}[htb]
  \centering
  \includegraphics{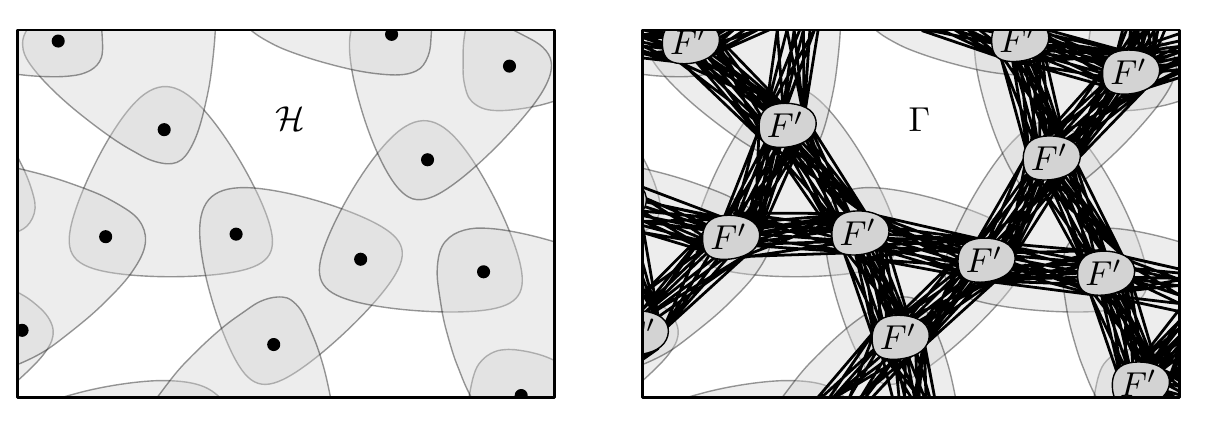}
  \caption{Left: The $k$-uniform hypergraph $\mathcal{H}$, $k=3$. Right: The graph $\Gamma$. }
  \label{fig:clique-splittable_1}
 \end{figure}

To show that  $\Gamma \not\to H$ color each edge with both endpoints in some $V_i$  red, $i=1, \ldots, n$,  and all other  edges  blue.  The red subgraph is a vertex disjoint union of copies of $F'$,  its clique number is strictly less than $\omega$, so it does not contain $H$, whose clique number is $\omega$. 
The blue subgraph is a union of complete $k$-partite graphs induced by $V_i$, $i=1, \ldots, n$. 
To see that the blue subgraph does not contain a copy of $H$, consider any copy of $H$ in $\Gamma$ and consider sets $V_{i_1}, \ldots, V_{i_\ell}$ intersecting the vertex set of this copy.
Since $\HH$ has girth at least $|V(H)|+1$,   $v_{i_1}, \ldots, v_{i_\ell}$ do not form a cycle in $\HH$, thus the blue graph induced by $V_{i_1}, \ldots, V_{i_\ell}$ is $k$-partite. 
However,  $\chi(H)>k$, so the blue subgraph does not contain a copy of $H$.\\

Next we shall show that  $\Gamma \to G$.  Consider a $2$-edge-coloring  of $\Gamma$. Recall that $n=|V(\HH)| = n(k, |V(H)|)$. 
We write $v_i\sim v_j$ if there is a hyperedge in $\HH$ containing both $v_i$ and $v_j$.

\setcounter{claim}{0}
\begin{claim}\label{claim1}
 For any $m$, $1\leq m \leq n$, any $i$,  $1 \leq i \leq m$, $V_i$ contains a subset $V_i'$ that is the vertex set of a monochromatic copy of $F$ and such that for any $v\in V_i'$ and any $j$ with $v_i\sim v_j$, $i < j \leq m$, all edges from $v$ to $V_j'$, are of the same color.
\end{claim}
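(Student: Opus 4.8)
The plan is to prove the claim by induction on $m$, arranging the construction so that the family for $m$ simply \emph{extends} the family for $m-1$: I would keep $V_1',\ldots,V_{m-1}'$ unchanged and only add a new top block $V_m'$. The single point that must be gotten right is the \emph{direction} in which the Focusing Lemma (Lemma~\ref{lem::focusing}) is applied. The claim demands, for a low-index set $V_i'$ and a high-index set $V_j'$ with $i<j$, that each individual vertex $v\in V_i'$ send edges of one color to all of $V_j'$. To produce this I would take the already-built low-index set as the side $A$ (the ``focusing'' side) and the not-yet-touched high-index block $V_j$ as the large side $B$ to be thinned; the conclusion ``for each $a\in A$ all edges from $a$ to $B'$ have a single color'' is then exactly ``each $v\in V_i'$ sees a monochromatic $V_j'$''. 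This is why the blocks must be created in increasing order of index, and why the induction adds the highest block $V_m$ last, focusing it against all lower blocks simultaneously.

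For the base case $m=1$, note that $V_1$ carries a copy of $F'$ and that $F'\epsto F$ gives in particular $F'\to F$ (take $S=V(F')$); hence the coloring restricted to the edges inside $V_1$ contains a monochromatic copy of $F$, and I set $V_1'$ to be its vertex set (there is no $j$ with $1<j\le 1$, so the second property is vacuous). For the inductive step, assuming the family $V_1',\ldots,V_{m-1}'$, I set $A=\bigcup_{i<m,\ v_i\sim v_m}V_i'$. Each $V_i'$ is a copy of $F$, so $|A|\le (m-1)|V(F)|<n|V(F)|$ and thus $2^{|A|}<2^{\,n|V(F)|}=\epsilon^{-1}$. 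Applying Lemma~\ref{lem::focusing} to the bipartite graph on $A\cup V_m$ formed by the edges of $\Gamma$ between $A$ and $V_m$ yields $V_m^\ast\subseteq V_m$ with $|V_m^\ast|\ge |V_m|/2^{|A|}>\epsilon|V(F')|$ such that every $a\in A$ sends a single color to all of $V_m^\ast$. Since $V_m$ is a copy of $F'$ and $|V_m^\ast|\ge\epsilon|V(F')|$, the relation $F'\epsto F$ gives $F'[V_m^\ast]\to F$, so the coloring restricted to $V_m^\ast$ contains a monochromatic copy of $F$; I let $V_m'\subseteq V_m^\ast$ be its vertex set. Then $V_m'$ spans a monochromatic $F$, and for every $i<m$ with $v_i\sim v_m$ and every $v\in V_i'\subseteq A$ all edges from $v$ to $V_m'\subseteq V_m^\ast$ share one color. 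As the lower blocks and all constraints with $j<m$ are untouched, this establishes the statement for $m$.

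The only genuine obstacle is the bookkeeping that makes the two $\epsilon$-parameters fit: focusing loses a factor $2^{|A|}$, and $A$ can meet up to all $m-1<n$ lower blocks, each of size $|V(F)|$, so the loss is at most $2^{\,n|V(F)|}$. This is precisely why $\epsilon$ was chosen as $2^{-|V(\HH)||V(F)|}=2^{-n|V(F)|}$, which guarantees that the thinned block $V_m^\ast$ remains an $\epsilon$-fraction of $V_m\cong F'$ and therefore still arrows $F$. Thus the two things to verify are the orientation of the Focusing Lemma (low side fixed, high side thinned) and this size accounting; everything else follows directly from the construction of $\Gamma$.
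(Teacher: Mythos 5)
Your proof is correct and follows essentially the same route as the paper: induction on $m$, applying the Focusing Lemma with the already-built low-index sets as the fixed side and $V_m$ as the thinned side, then using $F'\epsto F$ on the surviving $\epsilon$-fraction to extract the monochromatic copy of $F$. The only cosmetic difference is that you focus only against the blocks with $v_i\sim v_m$ while the paper focuses against all of $V_1'\cup\cdots\cup V_{m-1}'$; both satisfy the same size bound and yield the same conclusion.
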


\begin{figure}[htb]
 \centering
 \includegraphics{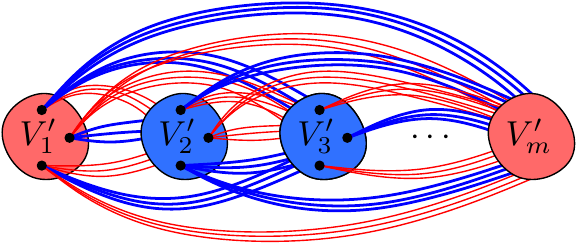}
 \hspace{4em}
 \raisebox{1.8em}{\includegraphics{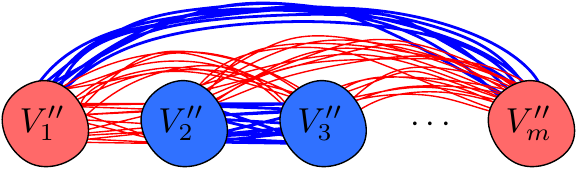}}
 \caption{Illustrations of Claim~\ref{claim1} (left) and Claim~\ref{claim2} (right). Thin edges are red and thick edges blue.}
 \label{fig::claims}
\end{figure}

We prove Claim~\ref{claim1} by induction on $m$ using the Focusing Lemma (Lemma~\ref{lem::focusing}).
When  $m=1$, we see that $\Gamma[V_1]$ is isomorphic to $F'$ and $F'\epsto F$. 
So in particular $F'\to F$ and there is a monochromatic copy of $F$ on some vertex set $V'_1$.
Assume that $V_1', V_2', \ldots, V_m'$ form vertex sets of monochromatic copies of $F$  satisfying the conditions of Claim~\ref{claim1}.
Apply the Focusing Lemma to the bipartite graph with parts $U_m= V_1'\cup \cdots \cup V_m'$ and  $V_{m+1}$.
It gives a subset $V_{m+1}^*\subseteq V_{m+1}$ such that 
for any $v\in U_m$, all edges between $v$  and $V_{m+1}^*$, if any, are of the same color and such that $|V_{m+1}^*|\geq 2^{-|V_1'\cup \cdots \cup V_m'|  } |V_{m+1}| = 2^{-m|V(F)|} |V_{m+1}| \geq  \epsilon |V_{m+1}|$.
Thus $\Gamma[V_{m+1}^*]$ contains a monochromatic copy of $F$, because $\Gamma[V_{m+1}]$ is isomorphic to $F'$ and  $F'\epsto F$.  Call the vertex set of this copy $V_{m+1}'$.

\begin{claim}\label{claim2}
 For any $m$, $1\leq m\leq n$, and $i$, $m \leq i \leq n$, each $V_i$ contains a subset $V_i''\subseteq V_i'$ that is the vertex set of a monochromatic copy of $G^\star$ and such that for each $j$ with $v_i\sim v_j$, $i < j \leq n$, $V_i''$, $V_j''$ are partite sets of a monochromatic complete bipartite graph.
\end{claim}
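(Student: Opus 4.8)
The plan is to establish Claim~\ref{claim2} by \emph{downward} induction on $m$, running from $m=n$ down to $m=1$ and feeding in the focused structure produced by Claim~\ref{claim1} with $m=n$. That input tells us that each $V_i'$ is the vertex set of a monochromatic copy of $F$, and that for every $v\in V_i'$ and every $j>i$ with $v_i\sim v_j$ all edges from $v$ to $V_j'$ carry a single color, which I will denote $c(v,j)$. These colors may differ for different $v$, so the bipartite graph between $V_i'$ and $V_j'$ need not be monochromatic; the point of Claim~\ref{claim2} is precisely to pass to subsets $V_i''\subseteq V_i'$ on which this vector of forward colors is constant, while still retaining a monochromatic copy of $G^\star$.

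For the base case $m=n$ the condition on later sets is vacuous, so it suffices to observe that $V_n'$ induces a monochromatic copy of $F$ and that $F$ contains $G^\star$ (indeed every set of at least $\epsilon_1|V(F)|$ vertices does), and to take $V_n''$ to be the vertex set of such a copy. For the inductive step I would assume $V_{m+1}'',\ldots,V_n''$ chosen, set $J=\{\,j:\ m<j\le n,\ v_m\sim v_j\,\}$, and color each vertex $v\in V_m'$ by the vector $(c(v,j))_{j\in J}$. Since $V_m'$ has $|V(F)|$ vertices, pigeonhole yields a subset $W\subseteq V_m'$ of size at least $|V(F)|/2^{|J|}$ on which this vector is constant. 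As $W\subseteq V_m'$ and $V_m'$ is monochromatic, any copy of $G^\star$ inside $W$ is monochromatic; taking $V_m''$ to be the vertex set of one such copy, every vertex of $V_m''$ sends, for each fixed $j\in J$, edges of one common color to $V_j'$ and hence to $V_j''\subseteq V_j'$, so the bipartite graph between $V_m''$ and $V_j''$ is monochromatic, as required.

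The one quantitatively delicate point—and the step I would watch most carefully—is verifying that $W$ is large enough to guarantee a copy of $G^\star$; this is exactly what the choice $\epsilon_1=2^{-|V(\HH)|}$ is engineered for. The index set $J$ consists of distinct vertices of $\HH$ other than $v_m$, so $|J|\le n-1$, and therefore
\[
 |W|\ \ge\ \frac{|V(F)|}{2^{|J|}}\ \ge\ 2^{1-n}|V(F)|\ >\ 2^{-n}|V(F)|\ =\ \epsilon_1|V(F)|.
\]
By the defining property of $F$ (Lemma~\ref{lem::SparseGraphRichCopies}) the set $W$ then contains a copy of $G^\star$, completing the step. In words, $\epsilon_1$ was fixed small enough that even after the pigeonhole loss of a factor $2^{|J|}\le 2^{n-1}$ the surviving set still exceeds the threshold $\epsilon_1|V(F)|$. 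Once this inequality is secured the induction descends all the way to $m=1$, producing the subsets $V_i''$ for every $i$ and establishing the claim.
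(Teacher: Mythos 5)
Your proof is correct and is essentially the paper's argument: a downward induction from $m=n$, using Claim~\ref{claim1} (applied with $m=n$) to get constant forward colors from each vertex, pigeonholing on the vector of these colors, and checking that the surviving set still has at least $\epsilon_1|V(F)|$ vertices so that the defining property of $F$ yields a monochromatic copy of $G^\star$. The only cosmetic difference is that you pigeonhole on the whole color vector at once, losing a factor $2^{|J|}\le 2^{n-1}$, whereas the paper halves the set one neighbor at a time to reach the same bound.
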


We prove Claim~\ref{claim2} by induction on $n-m$ using the pigeonhole principle. 
When  $m=n$, we see that $V_n'$ forms the vertex set of a monochromatic $F$, that in turn  contains  a monochromatic $G^\star$.  Denote the vertex set of this 
$G^\star$ as  $V''_n$.
Assume that $V_m'', V_{m+1}'', \ldots, V_n''$ form vertex sets of monochromatic copies of $G^\star$ satisfying the conditions of Claim~\ref{claim2}.
Consider $V_{m-1}'$ and recall from Claim~\ref{claim1} that each vertex in $V_{m-1}'$ sends only red or only blue edges to each $V_i''$ with $v_{m-1}\sim v_i$,  $i=m, \ldots, n$.
If $v_{m-1}\sim v_n$ then at least half of the vertices in $V_{m-1}'$ send monochromatic stars of the same color to $V_n''$.
If $v_{m-1}\sim v_{n-1}$ then at least half of those send monochromatic stars of the same color to $V_{n-1}''$, and so on.
So at least $2^{-(n-m)} |V_{m-1}'|$ vertices of $V_{m-1}'$ send monochromatic stars of the same color to each $V_i''$ with $v_{m-1}\sim v_i$ for $i=m, \ldots, n$. We denote the set of these vertices by $V_{m-1}^*$.
Since $\Gamma[V_{m-1}']$ forms the vertex set of a monochromatic $F$, and $|V_{m-1}^*|\geq 2^{-(n-m)}  |V_{m-1}'|\geq  \epsilon_1|V_{m-1}'| $,
the definition of $F$ implies that $\Gamma[V_{m-1}^*]$ contains a monochromatic copy of $G^\star$. We denote the vertex set of this copy by $V_{m-1}''$.

 \begin{figure}[htb]
  \centering
  \includegraphics{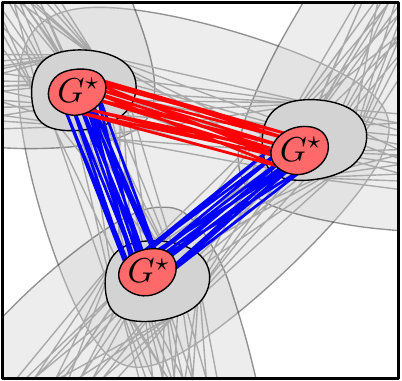}
  \caption{A set of $k$ red copies of $G^\star$ corresponding to the $k$ vertices of a hyperedge of $\mathcal{H}$. Here $k=3$. The complete bipartite graph between any two of these $k$ copies is also monochromatic.}
  \label{fig:clique-splittable_2}
 \end{figure}

Applying Claim~\ref{claim2} with $m=1$, we see that each vertex $v_i$ of $\HH$ corresponds to a monochromatic copy of $G^\star$ with vertex set $V_i''$, such that 
all edges between any two such copies from a common hyperedge have the same color.
Assigning the color of this $G^\star$ to $v_i$ gives a $2$-coloring of $V(\HH)$.
Since $\chi(\HH)>2$, there is a monochromatic hyperedge,  without loss of generality with red vertices $v_1, \ldots, v_k$. Thus in $\Gamma$ there are $k$ red copies of $G^\star$ on vertex sets $V_1'', \ldots, V_k''$, 
such that  $V_i'', V_j''$  are partite sets of monochromatic complete bipartite graphs, for all $i, j$,  $1\leq i<j\leq k$, see Figure~\ref{fig:clique-splittable_2}.
If at least one such bipartite graph is red, then there is a red copy of $G$ obtained by taking a red $G_1\subseteq G^*$ from one part and a red $G_2\subseteq G^*$ from the other part.
So we can assume that all such bipartite graphs are blue, forming a complete $k$-partite graph with each part of size $|V(G)|$.  Since $\chi(G)=k$,  there is a blue copy of $G$.  Thus $\Gamma\to G$.  Since  $\Gamma \not\to H$, we have that $G\not\req H$.
This concludes the proof of Theorem~\ref{thm::DifferentChrNo}.
\qed

 \begin{proof}[Proof of Corollary~\ref{Cor}]
Let $G$ and $H$ be two graphs such that $\chi(G)\neq \chi(H)$ and $\chi(G)\leq 2\omega(G)-2$.
Consider an arbitrary proper $\chi(G)$-vertex-coloring of $G$.
Let $V_1$ denote the union of $\lfloor\frac{\chi(G)}{2}\rfloor$ color classes and $V_2=V(G)\setminus V_1$.
Since $\omega(G)\geq \frac{\chi(G)}{2} +1$, every maximum clique contains a vertex from both sets $V_i$, $i=1,2$.
Thus, $G$ is clique splittable.
So, if $\chi(H)>\chi(G)$ then $G\not\req H$ by Theorem~\ref{thm::DifferentChrNo}.
If  $\chi(H)< \chi(G)$, then $\chi(H) <\chi(G) \leq 2\omega(H) -2$ (where we assume $\omega(H)=\omega(G)$ by Lemma~\ref{lem::RamseyGraphSameCliqueNo}). Thus, $H$ is clique-splittable with the same arguments as above.
Hence $G\not\req H$ by Theorem~\ref{thm::DifferentChrNo}.
\end{proof}


\subsection{Proof of Theorem~\ref{thm::SameChrNo}}

Our construction is similar to the one from Lemma 3.9 in~\cite{Grinshpun_RamsesyMinimal}.

 Consider a connected graph $H$.
 We may assume $\omega(G)=\omega(H)$ by Lemma~\ref{lem::RamseyGraphSameCliqueNo}.
 Note that $G$ is clique-splittable since $\omega(S)=0$ and $\omega(G-S)<\omega(G)$.
 Further note that if $G$ is bipartite, the conditions of the theorem imply that $G$ is a union of a matching and a set of  independent vertices.
 However, $G$ is assumed to be connected, and thus it must be a single edge.  Since a single edge is Ramsey isolated, we 
 can assume that $\chi(G)\geq 3$.\\

 In the first part of the proof, we assume that $H\not\subseteq G$ and $\chi(H)\geq \chi(G)$.
 Let $s=|S|$, $k= \chi(G) = \chi(H)$ and let $m$ denote the size of a matching induced by two color classes of some proper $k$-vertex-coloring of $G$.
 Note that $m\geq 1$ since there is at least one edge between any two color classes.
 Further let $n=|V(G)|$, $\omega=\omega(G)$ and $G_S$ be a vertex disjoint union of $G-S$ and $S$ independent vertices, 
 i.e., $G_S$ is the graph obtained from $G$ by deleting all edges incident to $S$.
 Then $\omega(G_S)<\omega$.
 Let $G'$ be a vertex disjoint union of $m' = (k -2)(s-1)+m$ copies of $G_S$ and $G_0'$ be a vertex disjoint union of $m'$ copies of $G$.
 Let $\epsilon = 2^{-|V(G')|k} = 2^{-m'nk}$.
 Let $F$ be a graph with $F\epsto G'$ and $\omega(F) = \omega(G') < \omega$, which exists by Lemma~\ref{lem::epsRamsey}.
 We construct a graph $\Gamma$ by taking the vertex disjoint union of a copy of $G_0'$ and $k-2$ copies of $F$ denoted by $F_1,\ldots,F_{k-2}$ and placing a complete bipartite graph between $F_i$ and $F_j$,  $1\leq i < j \leq k-2$  and between $F_i$ and $G_0'$, $i=1, \ldots, k-2$, see Figure~\ref{fig:matching-coloring_1}.\\
  
 \begin{figure}[htb]
  \centering
  \includegraphics{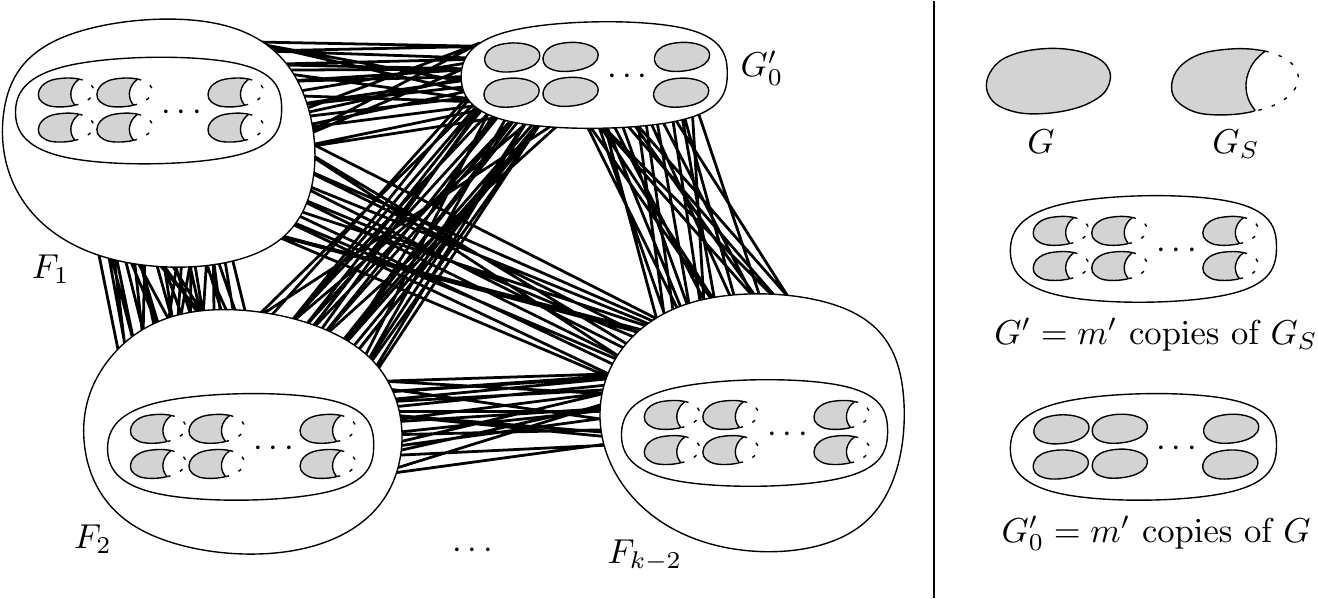}
  \caption{The graph $\Gamma$ consisting of $k-2$ copies $F_1,\ldots,F_{k-2}$ of $F$ and one copy of $G'_0$ and all possible edges between distinct copies. We have $F \epsto G'$, $G'$ consists of $m'$ copies of $G_S$ and $G'_0$ consists of $m'$ copies of $G$.}
  \label{fig:matching-coloring_1}
 \end{figure}

 We shall show that $\Gamma \to G$, but $\Gamma \not\to H$. 
 Color all edges within each $F_i$ and within $G_0'$ red and all other edges blue.
 Since $\omega(F)< \omega = \omega(H)$, $H\not\subseteq F$, and thus $H\not\subseteq F_i$, $i=1, \ldots, k-2$.
 Since $H\not\subseteq G$ and $H$ is connected,  we have that $H\not\subseteq G_0'$. 
 Thus there is no red copy of $H$.
 On the other hand, the blue subgraph is a complete $(k-1)$-partite graph,  but $\chi(H) \geq \chi(G) = k$. 
 Thus there is no blue copy of $H$.
 
 It remains to show that $\Gamma\to G$.
 Consider a $2$-edge-coloring of $\Gamma$.
 Assume for the sake of contradiction that there is no monochromatic copy of $G$.
 We prove the following claim, similar to Claim~\ref{claim-1} in the proof of Theorem~\ref{thm::DifferentChrNo}, by induction on $p$ (up to renaming colors), see Figure~\ref{fig:matching-coloring_2} for an illustration.

 \begin{claim*}
  For each $p$, $1\leq p\leq k-2$, and each $i$, $1\leq i\leq p$, there is a red copy $G_i'$ of $G'$ in $F_i$. Moreover for each $i$, $0 \leq i < p$, each vertex $v$ in $G_i'$ and each $j$, $i < j \leq p$, all edges between $v$ and $G_j'$ are of the same color.
 \end{claim*}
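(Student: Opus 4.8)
The plan is to prove the claim by induction on $p$, mirroring the induction behind Claim~\ref{claim1} in the proof of Theorem~\ref{thm::DifferentChrNo}: the Focusing Lemma (Lemma~\ref{lem::focusing}) produces the per-vertex monochromatic stars recorded in the ``moreover'' part, and the property $F\epsto G'$ (Lemma~\ref{lem::epsRamsey}) upgrades a large surviving subset of $F_{p+1}$ to a monochromatic copy of $G'$. Throughout I work under the standing assumption that the fixed $2$-edge-coloring of $\Gamma$ has no monochromatic $G$, and I use that $k=\chi(G)\ge 3$, that $m\ge 1$, and that each copy of $G_S$ contains the image of $S$ as a set of $s$ isolated vertices. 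Note that every blob has exactly $m'n$ vertices, since $|V(G_S)|=|V(G)|=n$, so $|V(G')|=|V(G_0')|=m'n$.

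For the base case $p=1$ I would apply Lemma~\ref{lem::focusing} to the complete bipartite graph between $A=V(G_0')$ and $B=V(F_1)$. As $|A|=m'n$, this yields $B'\subseteq V(F_1)$ with $|B'|\ge 2^{-m'n}|V(F_1)|\ge 2^{-m'nk}|V(F_1)|=\epsilon|V(F_1)|$ on which every vertex of $G_0'$ sends a monochromatic star. Since $F\epsto G'$, the set $B'$ carries a monochromatic copy $G_1'$ of $G'$, and the focusing property is exactly the ``moreover'' clause for $i=0$. For the inductive step I take $A=V(G_0')\cup V(G_1')\cup\cdots\cup V(G_p')$ and $B=V(F_{p+1})$; here $|A|=(p+1)m'n\le (k-2)m'n\le m'nk$, so the surviving set $B'$ again has size at least $\epsilon|V(F_{p+1})|$ and supports a monochromatic copy $G_{p+1}'$ of $G'$. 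The new instances of the ``moreover'' clause, namely the pairs $(i,p+1)$ for $0\le i\le p$, come directly from this focusing step, while the pairs with $j\le p$ are inherited from the induction hypothesis.

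The step I expect to be the main obstacle is verifying that the new copy $G_{p+1}'$ may be taken \emph{red}, i.e.\ in the same color as the earlier copies, which is what licenses the phrase ``up to renaming colors''. I would argue this by contradiction: suppose $G_{p+1}'$ is blue while $G_1'$ is red. By the ``moreover'' clause every vertex of $G_1'$ sends a monochromatic star to all of $G_{p+1}'$, so I may color each vertex of $G_1'$ red or blue according to its star color. If at least $s$ vertices of $G_1'$ are blue, I place the image of $S$ on $s$ of them and the image of $G-S$ on a blue copy of $G-S$ inside $G_{p+1}'$; since $S$ is independent, the only edges of $G$ to realize are those between $S$ and $G-S$, all of which lie in the complete bipartite graph between $F_1$ and $F_{p+1}$ and are blue, producing a blue $G$. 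Otherwise fewer than $s$ vertices of $G_1'$ are blue, hence fewer than $s$ of the $m'$ disjoint copies of $G_S$ in $G_1'$ meet a blue vertex; because $m'=(k-2)(s-1)+m\ge s$, at least one such copy is entirely red-starred, and placing $G-S$ on its red copy and the image of $S$ on any $s$ vertices of $G_{p+1}'$ yields a red $G$. Either outcome contradicts the standing assumption, so $G_{p+1}'$ is red. The point to keep honest here is that a monochromatic \emph{subgraph} copy of $G$ imposes no condition on edges inside the image of the independent set $S$, which is precisely why the images of $S$ can be placed on arbitrary distinct vertices of a blob; this is also where the padding in $m'$ and the hypothesis $\omega(G-S)<\omega(G)$ (ensuring $G_S$, and hence $F$, avoids $H$) are actually used.
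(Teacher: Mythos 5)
Your proof is correct and follows essentially the same route as the paper: the Focusing Lemma applied to the union of all previously fixed blobs versus $V(F_{p+1})$, the $F\epsto G'$ property to extract a monochromatic copy of $G'$, and the same dichotomy (at least $s$ blue-starred vertices of $G_1'$ give a blue $G$ by planting $S$ in $F_1$ and $G-S$ in the new blob; otherwise $m'\ge s$ guarantees a fully red-starred copy of $G_S$ and hence a red $G$). The only differences are cosmetic (indexing the step as $p\to p+1$ rather than $p-1\to p$), and your explicit remarks about why $S$ being independent makes the embedding work are accurate.
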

 
  \begin{figure}[htb]
  \centering
  \includegraphics{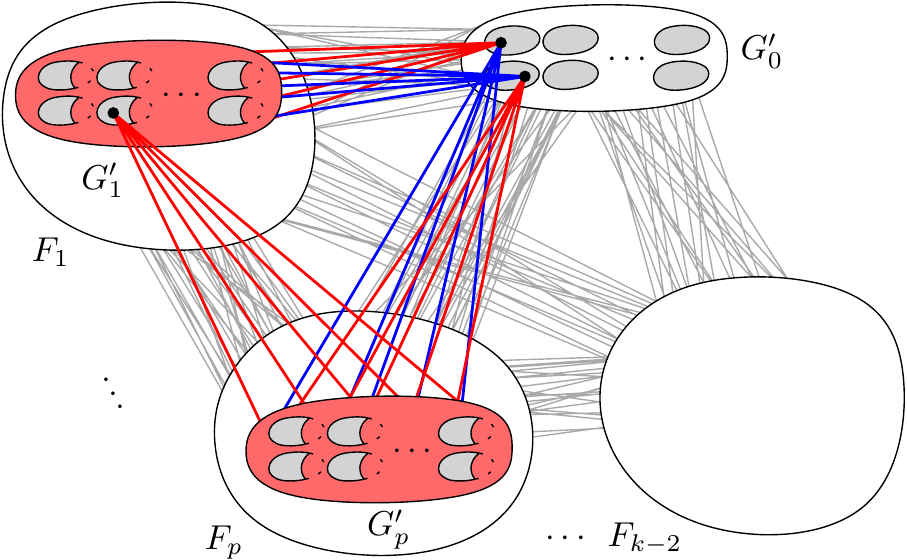}
  \caption{Illustrating the statement of the Claim.}
  \label{fig:matching-coloring_2}
 \end{figure}
 
 There is a set $V_1$ of $2^{-m'n}|V(F_1)| \geq \epsilon |V(F)|$ vertices in $F_1$ such that for each vertex in $G_0'$ all edges to $V_1$ are of the same color by the Focusing Lemma (Lemma~\ref{lem::focusing}).
 Since $F\epsto G'$ there is a monochromatic  copy $G'_1$ of $G'$ in $F_1[V_1]$.
 Assume without loss of generality that $G'_1$ is red.
 This proves the Claim for $p=1$, and for $k=3$. 
 
 Suppose $k-2\geq p \geq 2$ and there are red subgraphs $G'_{1},\ldots,G'_{p-1}$, satisfying the conditions of the Claim.
 We apply the Focusing Lemma to the complete bipartite graph with one part $V(G'_0)\cup \cdots \cup  V(G'_{p-1})$ and the other part $V(F_p)$.
 There is a set $V_p\subseteq V(F_p)$ of size $2^{-|V(G')|p} |V(F_p)|\geq \epsilon |V(F)|$, such that for each vertex $v$ in $G'_0, \ldots, G'_{p-1}$ all edges from $v$  to $V_p$ are of the same color.
 Since $F\epsto G'$ there is a monochromatic copy $G_p'$ of $G'$ in $F_p[V_p]$.
 It remains to prove that $G_p'$ is red. 
 Assume $G_p'$ is blue.
 Consider the vertices of $G_1'$.
 All of them send monochromatic stars to $G_p'$.
 At most $s-1$ of these stars are blue, as otherwise these stars together with a blue subgraph of $G_p'$ isomorphic to $G_S$ form a blue copy of $G$.
 Since the number of vertex disjoint copies of $G_S$ in $G_1'$ is $m'>s-1$, there is a red copy $G^*$ of $G_S$ in $G'_1$ whose vertices send only red stars to $G_p'$.
 Taking $G^*$ and $s$ vertices from $G_p'$ gives a red copy of $G$, a contradiction.
 So we may assume that $G_p'$ is red, which completes the proof of the Claim.\\

 Consider the red $G_i'$, $1\leq i\leq k-2$, given by the Claim for $p=k-2$.
 We say that a vertex in $V(G_i')$, $i=0, \ldots, k-3$  is bad for $G_j'$ if it sends a red star to $G_j'$, for some $j>i$.
 Since for each $G_j'$ there are at most $s-1$ bad vertices, there are at most $(k-2)(s-1)$ bad vertices overall.
Since $G_0'$ has $m' = (k-2)(s-1) + m$ vertex disjoint copies of $G$, there are at least $m \geq 1$ copies $G_1^0, \ldots, G_m^0$ of $G$ in $G_0'$ without bad vertices.
Since each $G_i'$, $i=1, \ldots k-2$, has $m' = (k-2)(s-1) + m$ disjoint copies of $G_S$, there is at least one copy $G_i''$ of $G_S$ in $G_i'$ without bad vertices, $i=1,\ldots, k-2$.    
 Note that all $G_i''$s are red, $i=1, \ldots, k-2$,  all edges between them are blue, and all edges between a $G_i''$ and $G^0_j$ are blue, $i=1, \ldots, k-2$, $j= 1, \ldots, m$, see Figure~\ref{fig:matching-coloring_3}.
 
  \begin{figure}[htb]
  \centering
  \includegraphics{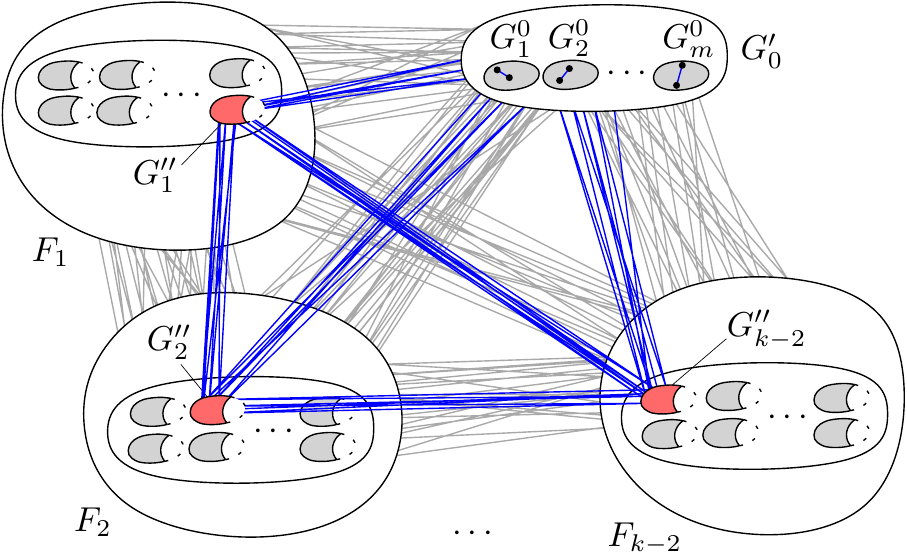}
  \caption{One red copy of $G_S$ in each of $F_1,\ldots,F_{k-2}$ and $m$ copies $G^0_1,\ldots,G^0_m$ of $G$ in $G'_0$ where all edges between distinct copies are blue. If each $G^0_i$, $i=1,\ldots,m$, has a blue edge we find a blue copy of $G$ in here.}
  \label{fig:matching-coloring_3}
 \end{figure}
 
 By assumption each $G^0_j$, $j=1, \ldots, m$,  has a blue edge, since otherwise there is a red copy of $G$.
 But then we can find a blue copy of $G$ by identifying these blue edges with the matching of size $m$ induced by the union of two color classes of $G$, picking the other vertices of these two color classes from $G^0_1$ and the vertices of the other $k-2$ color classes of $G$ from $G_i''$, $i=1, \ldots, k-2$.
 Since $|V(G_i'')| =|V(G)|$, there is sufficient number of vertices for each color class.
 Altogether we have a contradiction to our assumption that there are no monochromatic copies of $G$. Hence $\Gamma\to G$.
 This concludes the proof in case when $H\not\subseteq G$ and $\chi(H)\geq \chi(G)$.\\

 Now, in the second part of the proof, we assume that $H\subseteq G$.
Then $\chi(H)\leq \chi(G)$.
Since we assume that $\omega(G)=\omega(H)$,  we have $\omega(H-S) < \omega(H)$. Thus, $H$ is clique-splittable.
Assume first that $\chi(H)< \chi(G)$.
Then we have $G\not\req H$ by Theorem~\ref{thm::DifferentChrNo},  applied with roles of $G$ and $H$ switched.
The last case to consider is when  $\chi(H)=\chi(G)$ (and $H\subseteq G$).
 Now any proper $\chi(G)$-vertex-coloring of $G$ with two color classes inducing a subgraph of a  matching gives such a coloring of $H$, too.
 Thus, the first part of the proof applied with roles of $G$ and $H$ switched shows that $G\not\req H$.
 \qed

%


\subsection{Proof of Theorem~\ref{thm::PathStar}}

\begin{proof}[Proof of \ref{thm::PathStar}.1:]

Assume that $\chi(G)=\omega(G)$ and in some proper $\chi(G)$-vertex-coloring of $G$ two color classes induce a subgraph of a  matching. Then  $G$ satisfies the requirements of Theorem~\ref{thm::SameChrNo}. If $\omega(H)\neq \omega(G)$ then $H\not\req G$ by Lemma~\ref{lem::RamseyGraphSameCliqueNo}. 
So, we can assume that $\omega(H)=\omega(G)$. If  $H \subseteq G$ or  $\chi(H)\geq \chi(G)$, then 
$G\not\req H$ by Theorem~\ref{thm::SameChrNo}. If  $H\not\subseteq G$ and $\chi(H)<\chi(G)$,
then $\omega(H)=\omega(G)  = \chi(G) >  \chi(H)$. Thus $\chi(H)<\omega(H)$, a contradiction.
\end{proof}

\begin{proof}[Proof of \ref{thm::PathStar}.2:]

	To see that a star $S=K_{1,t}$ is not Ramsey equivalent to any other graph, observe that $K_{1,2t-1}$ is a minimal Ramsey graph for $S$, but $K_{1,2t-1}$ is minimal Ramsey for neither any connected subgraph of $S$ nor  any connected graph that is not a subgraph of $S$.

 It remains to show that a path is not Ramsey equivalent to any other connected graph.
 Let $G = P_m$, a path on $m$ vertices, and $H$ be a connected graph not isomorphic to $G$.
 If $H$ is a path of different length, then $G\not\req H$ since $R(P_m) = m + \lfloor \frac{m}{2} \rfloor - 1$~\cite{PathRamsey} and hence $R(G)\neq R(H)$.
 So assume $H$ is not a path.
 If $H$ is not a tree, then by Lemma~\ref{lem::extremalNo} we have $G\not\req H$.  
 Otherwise, $H$ is a tree and $\Delta(H) \geq 3$. Then  $R_\Delta(H) \geq 2\Delta(H)-1 \geq 5$~\cite{DegreeRamseyTrees}, while an easy argument due to Alon \emph{\textit{et al.}}~\cite{SmallMonoComponents} shows that $R_\Delta(G) \leq 4$.
 Indeed, for any $4$-regular graph $F$ with girth at least $m+1$ we have $F \to P_m$ as follows.
 Considering any $2$-edge-coloring of $F$,  we see that since $F$ has average degree $4$ at least one color class has average degree at least $2$, i.e., contains a cycle.  Since $\girth(F) \geq m+1$, this monochromatic cycle has length at least $m+1$, and thus contains $P_m$. 
\end{proof}
 
 \begin{proof}[Proof of \ref{thm::PathStar}.3:]
\begin{figure}[htbp]
\centering
 \includegraphics{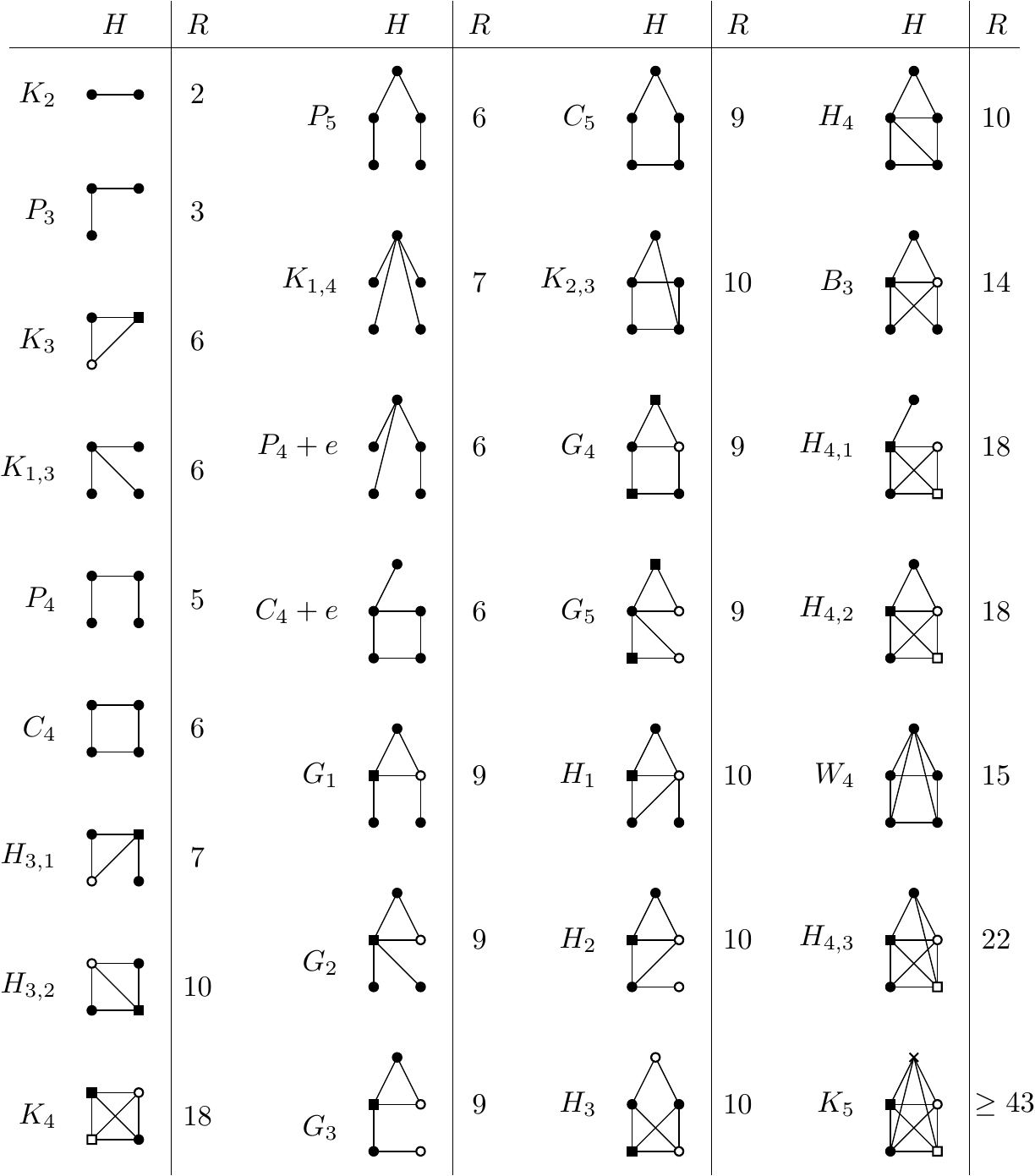}
 \caption{The connected graphs on at most $5$ vertices with their Ramsey numbers $R=R(H)$.}
 \label{fig::5Vert}
\end{figure}
 
 Figure~\ref{fig::5Vert} shows all non-trivial connected graphs on at most $5$ vertices.
 Let $S=\{C_4, P_4+e, C_4+e, C_5, K_{2,3}, H_4, W_4\}$.
 Observe that any connected graph on at most $5$ vertices which is not in $S$ satisfies the conditions of Theorem~\ref{thm::PathStar}.1 or~\ref{thm::PathStar}.2 and thus is Ramsey isolated; Figure~\ref{fig::5Vert} also indicates proper colorings for the graphs which satisfy the conditions of Theorem~\ref{thm::PathStar}.1.
 It remains to prove that each graph in $S$ is Ramsey isolated.
 We consider the graphs in $S$ grouped according to their Ramsey number.
Let $S_1=\{C_4, P_4+e, C_4+e\}$, $S_2=\{C_5\}$, $S_3=\{K_{2,3}, H_4\}$, and $S_4=\{W_4\}$.
Here $S_1$ contains the graphs from $S$ of Ramsey number $6$, $S_2$ the graph of Ramsey number $9$, $S_3$ those of Ramsey number $10$, and $S_4$ the graph of Ramsey number $18$, see~\cite{chvatal_Ramsey4Vertices,hendry_Ramsey5Vertices}.
 \medskip
 
 First of all we consider $G\in S_1$.
 Consider a connected graph $H$ which is not isomorphic to $G$.
 If $|V(H)|\geq 6$, then $R(H)>6$ and hence $H\not\req G$.
 Indeed, if $H$ is a star then coloring the edges of a $C_6$ in $K_6$ red and all other edges blue does not yield a monochromatic $H$.
 If $H$ is not a star, then color a copy of $K_{1,5}$ in $K_6$ red and all other edges blue.
 Then the red edges form a star and the blue connected subgraph contains only $5$ vertices, so the coloring has no monochromatic $H$.
 So assume that $|V(H)|\leq 5$.
 If $H\not\in S_1$ we have $G\not\req H$.
 Indeed either $H\in S\setminus S_1$ and $R(H)\neq R(G)$, or $H\not\in S$ and $H$ is Ramsey isolated by Theorem~\ref{thm::PathStar}.1 or~\ref{thm::PathStar}.2.
 So it remains to distinguish the graphs in $S_1$ from each other.
 We have $H_{5,4}\not\to C_4$ and $H_{5,4}\not\to C_4+e$ due to the coloring given in Figure~\ref{fig::H5x}.
We claim that $H_{5,4}\to P_4+e$.
Indeed, consider a $2$-edge-coloring of $H_{5,4}$ and a vertex $u$ of degree $5$.
Without loss of generality $u$ is incident to $3$ red edges $ux$, $uy$ and $uz$.
Then there is a red $P_4+e$ or all edges between $\{x,y,z\}$ and $V(H_{5,4})\setminus\{u,x,y,z\}$ are blue.
But in the latter case the vertices in $H_{5,4}$ other than $u$ give a blue $P_4+e$.
In particular $H_{5,4}\to P_4+e$ and thus $P_4+e\not\req C_4$ and $P_4+e\not\req C_4+e$.
Finally $\cR_{\delta}(C_4)=3$~\cite{Fox_MinDegreeRamseyMinimal} and $\cR_{\delta}(C_4+e)=1$~\cite{Fox_EquivClique} (for the latter see a remark in the conclusion of~\cite{Fox_EquivClique}).
Thus $C_4\not\req C_4+e$ and hence $G$ is Ramsey isolated.
\begin{figure}[tbp]
 \begin{minipage}{0.32\textwidth}
 \centering
 (1)
 \includegraphics[height=1.8cm]{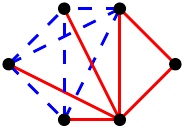}
 \end{minipage}
 \hfill
 \begin{minipage}{0.32\textwidth}
 \centering
 (2)
 \includegraphics[height=1.8cm]{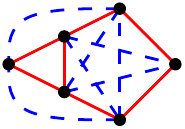}
 \end{minipage}
 \hfill
 \begin{minipage}{0.32\textwidth}
 \centering
 (3)
 \includegraphics[height=1.8cm]{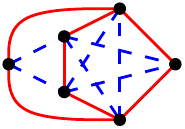}
 \end{minipage}
 \caption{A coloring of  $H_{5,2}$ without monochromatic $P_5$ (1), a coloring of $H_{5,4}$ without monochromatic $C_4$ (2) and a coloring of $H_{5,4}$ without monochromatic $K_3$ (3).}
 \label{fig::H5x}
\end{figure}
\medskip 

 Next consider $G\in S_2$, i.e., $G=C_5$ and a connected graph $H$ which is not isomorphic to $G$.
 If $|V(H)|\leq 5$ we have $G\not\req H$, because $H\in S\setminus S_2$ and hence $R(H)\neq R(G)$ or $H\not\in S$ (and $H$ is Ramsey isolated by Theorem~\ref{thm::PathStar}.1 or~\ref{thm::PathStar}.2).
 If $H$ is bipartite then $G\not\req H$ by Observation~\ref{obs::bipartite}.
 If $|V(H)|\geq 6$ and $H$ is not bipartite, then $R(H)> 10$.
 Indeed color the edges of $K_{10}$ with two vertex disjoint red copies of $K_5$ and all other edges blue.
 Then each connected component of the red subgraph has $5$ vertices and the blue subgraph is bipartite.
 In particular there is no monochromatic copy of $H$.
 We conclude that $G\not\req H$, so $G$ is Ramsey isolated.
 \medskip
 
 Next consider $G\in S_3$ and a connected graph $H$ which is not isomorphic to $G$.
 Since $K_{2,3}$ is bipartite but $H_4$ is not, the two graphs in $S_3$ are not Ramsey equivalent by Observation~\ref{obs::bipartite}.
 If $|V(H)|\leq 5$ then $G\not\req H$, because either $H\in S_3\setminus\{G\}$, or $R(H)\neq R(G)$, or $H\not\in S$.
 So assume $|V(H)|\geq 6$.
 Then $K_{2,3}\not\req H$ by Lemma~\ref{lem:K23Isolated}.
 If $H$ is bipartite then $H_4\not\req H$ by Observation~\ref{obs::bipartite}.
 If $H$ is not bipartite then $H_4\not\req H$, since $R(H)> 10=R(H_4)$ as argued above (when considering $S_2$).
 Altogether $G$ is Ramsey isolated.
 \medskip
 
 Finally consider $G\in S_4$, i.e. $G=W_4$, and a connected graph $H$ which is not isomorphic to $G$.
 If $|V(H)|\leq 5$ we have $G\not\req H$, because $H\not\in S_4$ and hence $R(H)\neq R(G)$ or $H\not\in S$.
 If $|V(H)|\geq 6$ then $W_4\not\req H$ by Lemma~\ref{lem:W4Isolated}.
 Hence $G$ is Ramsey isolated.
\end{proof}

\vskip 1cm

\begin{proof}[Proof of Remark~\ref{smallDistinguishingGraphs}.]
Next we show that all but $11$ pairs from the $\binom{31}{2}=465$ pairs of distinct connected graphs on at most $5$ vertices are distinguished by a  small graph.  For $447$ such pairs of such graphs $\{G,H\}$ we give a distinguishing graph on $\min\{R(G),R(H)\}$ vertices, which is clearly best-possible.
Among others, we will use graphs $\Gamma$ and $\Gamma'$ given in Figure~\ref{fig::graphGamma} and Figure~\ref{fig::GammaPrimeG3} respectively.
The graph $\Gamma'$ is obtained from $K_7$ by adding two independent vertices of degree $5$ such that these two vertices have exactly $4$ common neighbors.

\begin{figure}[htbp]
\begin{minipage}{0.48\textwidth}
 \centering
 \includegraphics{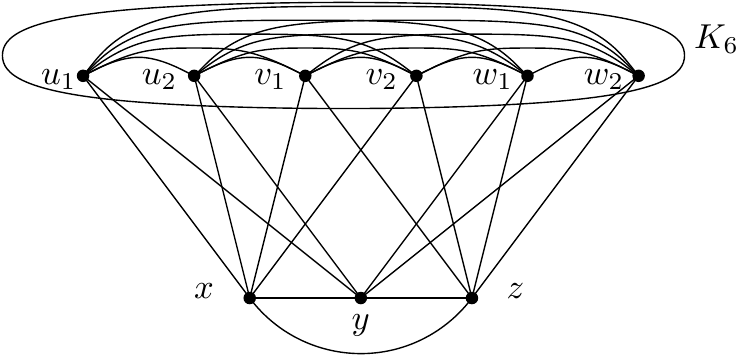}
 \caption{Graph $\Gamma$.}
 \label{fig::graphGamma}
\end{minipage}
\hfill
\begin{minipage}{0.48\textwidth}
 \centering
 \includegraphics{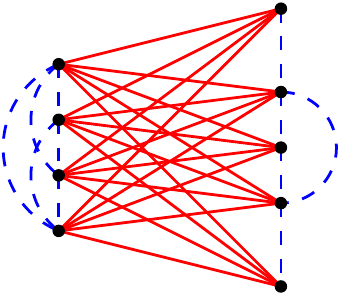}
 \caption{A coloring of $\Gamma'$ without monochromatic $G_3$.}
 \label{fig::GammaPrimeG3}
\end{minipage} 
\end{figure}

First of all note that two graphs $G$, $H$ of different Ramsey number are distinguished by $K_n$ where $n=\min\{R(G),R(H)\}$ which is the smallest possible order of a distinguishing graph.
This result distinguishes already lots of graphs using small graphs.
It remains to distinguish pairs of connected graphs on at most $5$ vertices of the same Ramsey number.
Hence we need to consider the following sets of graphs corresponding to Ramsey number $6$, $9$, $10$ and $18$ respectively;

\textbf{Ramsey number 6:} $\{K_3, K_{1,3},C_4, P_5, P_4+e, C_4+e\}$. We have
 $K_{1,5}\to K_{1,3}$ (pigeonhole principle) but $K_{1,5}\not\to K_3, C_4, P_5, P_4+e, C_4+e$ ($K_{1,5}$ does not contain these),
 $H_{5,2}\to P_4+e$ (Lemma~\ref{lem::H52P4e}) but $H_{5,2}\not\to K_3, C_4, P_5, C_4+e$ (Figure~\ref{fig::H5x}), 
 $H_{5,4}\to P_5$ (Lemma~\ref{lem::H53P5}) but $H_{5,4}\not\to K_3, C_4, C_4+e$ (Figure~\ref{fig::H5x}), 
 $K_{5,5}\to C_4, C_4+e$ (Lemma~\ref{lem::K55C4}) but $K_{5,5}\not\to K_3$ (since $K_3$ not bipartite). 
 It remains open to distinguish $C_4$ and $C_4+e$ by some small graph.

\textbf{Ramsey number 9:} $\{G_1, G_2, G_3, C_5, G_4, G_5\}$. We have 
 $\Gamma \to G_1, G_3$ (Lemma~\ref{lem::gammaG1},~\ref{lem::gammaG3}) but $\Gamma \not\to G_2, G_5$ (Lemma~\ref{lem::maxDegree}), 
 $\Gamma'\to G_1$ (Lemma~\ref{lem::GammaPrimeG1}) but $\Gamma'\not\to G_3$ (Figure~\ref{fig::GammaPrimeG3}), 
 $H_{8,5}\to G_1,G_2,G_3$ (Lemma~\ref{lem::H85G123}) but $H_{8,5}\not\to G_4, G_5, C_5$ (Figures~\ref{fig::H85},~\ref{fig::H86}).  
 We conjecture $H_{8,6}\to G_4$ (motivated by Lemma~\ref{lem::H86G4}) but $H_{8,6}\not\to G_5$ (Figure~\ref{fig::H86}). 
  It remains open to distinguish $C_5$ from $G_4$ and $G_5$ by small graphs.

  \begin{figure}[htbp]
 \begin{minipage}{0.48\textwidth}
 \centering
 \includegraphics{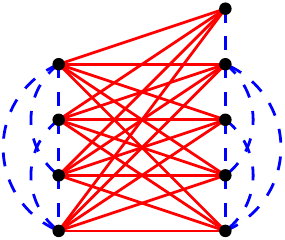}
 \caption{A coloring of $H_{8,5}$ without monochromatic $G_4$ and $C_5$.}
 \label{fig::H85}
 \end{minipage}
 \hfill
 \begin{minipage}{0.48\textwidth}
 \centering
 \includegraphics{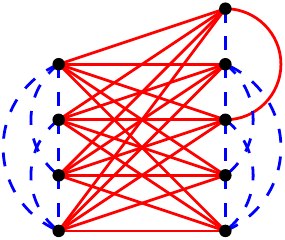}
 \caption{A coloring of $H_{8,6}$ without monochromatic $G_5$.}
 \label{fig::H86}
 \end{minipage}
\end{figure}
 
\textbf{Ramsey number 10:} $\{H_{3,2}, K_{2,3}, H_1, H_2, H_3, H_4\}$. We have 
 $K_{12,12}\to K_{2,3}$ (Lemma~\ref{lem::K1212}) but the other graphs are not bipartite, 
 $H_{9,6}\to H_{3,2}, H_1, H_2$ (Lemma~\ref{lem::H96H12}) but $H_{9,6}\not\to H_3,H_4$ (Figures~\ref{fig::H96H3},~\ref{fig::H98H4}). 
 In this case it remains to distinguish each pair within the sets $\{H_{3,2}, H_1, H_2\}$ and $\{H_3,H_4\}$ with a small graph.
 
 \textbf{Ramsey number 18:} $K_4, H_{4,1}, H_{4,2}$. 
 We have not found any small distinguishing graph for the pairs in this case.
 \begin{figure}[htbp]
	\begin{minipage}{0.48\textwidth}
	\centering
	\includegraphics[]{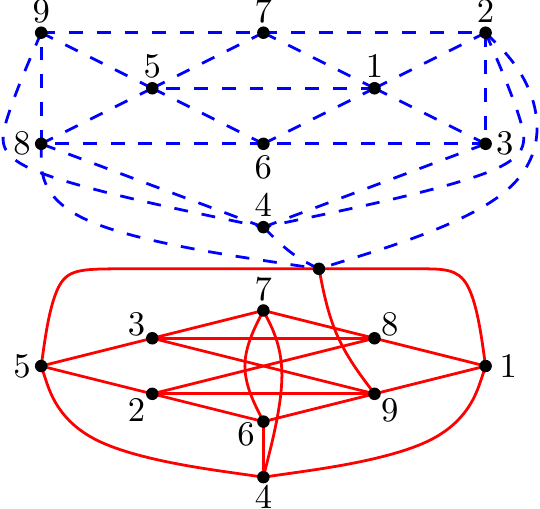}
	\caption{An edge-coloring of $H_{9,6}$ without monochromatic $H_3$ is obtained by identifying vertices of the same label.}
	\label{fig::H96H3}
	\end{minipage}
	\hfill
	\begin{minipage}{0.48\textwidth}
	\centering
	\includegraphics[]{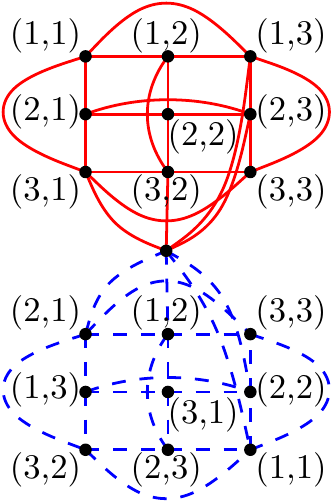}
	\caption{An edge-coloring of $H_{9,8}$ without monochromatic $H_4$ is obtained by identifying vertices of the same label.}
	\label{fig::H98H4}
	\end{minipage}
	\hfill
\end{figure}
\end{proof}


\subsection{Proof of Theorem~\ref{thm::Trees} (Trees)}

Assume first that Conjecture~\ref{tree-conjecture} is true.  
Let $T_k$ and $T_\ell$ be trees on $k$ and $\ell$ vertices respectively, $k< \ell$. 
Note that ${\rm ex}(n, T_\ell) \geq  \frac{\ell - 2}{2} n - \ell^2$. Indeed, just take $\lfloor\frac{n}{\ell-1}\rfloor$ vertex disjoint copies of $K_{\ell-1}$. 
Then 
\[
 {\rm ex}(n, T_k) \leq  \frac{k-1 -\epsilon}{2} n  =\sqrt{n}  \left( \frac{k-1 -\epsilon}{2} \sqrt{n}\right) <   \sqrt{n} \left(  \frac{\ell- 2}{2}  \sqrt{n}  - \ell^2\right)  \leq  \sqrt{n}\hskip 0.1 cm {\rm ex}(\sqrt {n}, T_\ell),
\]
for sufficiently large $n$.
Thus ${\rm ex}(n, T_k) <  {\rm ex}(\sqrt {n}, T_\ell) \sqrt{n}$ and Lemma~\ref{lem::extremalNo} implies that $T_k \not\req T_{\ell}$.
\medskip

Now, we shall prove the second statement of Theorem~\ref{thm::Trees} without assuming the validity of Conjecture~\ref{tree-conjecture}.
Let  $T_k$ be  a balanced tree on $k$ vertices and $T_{\ell}$ be any tree on $\ell \geq k+1$ vertices.
Let $G$ be a $k$-regular graph of girth at least $k$, which is known to exist~\cite{SachsRegularGirth}.
We construct a bipartite $k$-regular graph $B$ of girth at least $k$ from $G$ by taking for each $v$ in $G$ two vertices $v_1,v_2$ in $B$ and for every edge $uv$ in $G$ the edges $u_1v_2$ and $u_2v_1$ in $B$.
Finally, let $F = L(B)$ be the line graph of $B$.
We shall show that $F \not\to T_{\ell}$ and $F\to T_k$.
 
 As $B$ is bipartite,   $F = L(B)$ is a union of two graphs $F_1$, $F_2$, each is a vertex disjoint union of copies of $K_k$, where each  clique in $F_i$ corresponds to 
 a set of edges incident to a vertex in the $i$th  partite set of $B$, $i=1,2$. Note that a clique in $F_1$ intersects a clique in $F_2$ by at most one vertex and 
 that each vertex in $F$ belongs to two cliques, one from $F_1$ and one from $F_2$.
 
 Coloring  $F_1$ red and $F_2$ blue  gives no  monochromatic $T_\ell$ since each monochromatic connected component has $k<\ell$ vertices.  
 Thus $F\not\to T_\ell$. 
  \medskip
 
 Next, we show that $F \to T_k$.
 Let $vw$ be an edge of $T_k$ such that the components of $T_k - vw$ rooted at $v$ and $w$ have order at most  $\lceil\frac{k+1}{2}\rceil$.
 Consider any edge-coloring of $F$ with colors red and blue.
 Note that $|V(F)| = |E(B)| = \frac{k}{2}|V(B)|$ and $|E(F)| = \binom{k}{2}|V(B)| = (k-1)|V(F)|$.
 Hence there are at least $\frac{k-1}{2}|V(F)|$ red edges or at least $\frac{k-1}{2}|V(F)|$ blue edges.
 (Note that Conjecture~\ref{tree-conjecture}, if true, would imply that there is a red or blue copy of $T_k$, independent of the girth of $B$ and whether $T_k$ is balanced.)
 Assume without loss of generality that there are at least $\frac{k-1}{2}|V(F)|$ red edges.
 Consider the red subgraph $G_r$ of $F$ and a subgraph $G$ of $G_r$ of highest average degree.
 It follows that $\delta(G) \geq \lceil\frac{k-1}{2}\rceil$ and $|E(G)| \geq \frac{k-1}{2}|V(G)|$, and so $\Delta(G) \geq k-1$.
 If $\Delta(G) = k-1$, then $G$ is $(k-1)$-regular and we can embed $T_k$ into $G$ greedily.
 So without loss of generality we have $\Delta(G) \geq k$.
  
 Let $x$ be a vertex of maximum degree in $G$, i.e., $\deg_G(x) \geq k$.
 It follows that $x$ has incident red edges in both corresponding maximum cliques $C_1,C_2$ in $F$.
 Without loss of generality $x$ has at least $\lceil\frac{k-1}{2}\rceil$ incident red edges in $C_1$.
 We embed $v$ onto $x$, $w$ onto a neighbor of $x$ in $G_r$ in $C_2$ and all neighbors of $v$ different from $w$ onto neighbors of $x$ in $G_r$ in $C_1$.
 Now we can greedily embed the subtrees $T_1,\ldots,T_a$ of $T_k - v$ with their roots at the designated vertices in $G_r$.
 Say $T_1$ is the subtree rooted at $w$.
 As $\delta(G) \geq \lceil\frac{k-1}{2}\rceil \geq |V(T_1)|-1,\sum_{i = 2}^a |V(T_i)|$ and $B$ has girth greater than $k$,  the embeddings of $T_1$ and $\bigcup_{i = 2}^a T_i$ are in disjoint sets of cliques.
 It follows that $F \to T_k$.
\qed

\subsection{Proof of Theorem~\ref{thm::MoreColors} (Multicolor Ramsey numbers)}

We prove the first part of the theorem.
Let $m=R(G,G,F)$.
Consider a $3$-edge-coloring $c$ of $K_m$ without red or blue $H$ and without green $F$, which exists as $m < R(H,H,F)$.
Let $\Gamma$ denote the graph obtained from $K_m$ by removing all green edges under $c$.
Thus $\Gamma\not\to H$ due to the coloring $c$ restricted to $\Gamma$.
But $\Gamma\to G$, since any $2$-edge-coloring of $\Gamma$ without monochromatic $G$ can be extended by the green edges of $c$ to an edge-coloring of $K_m$ without red or blue $G$ and without green $F$.\\

We prove the second statement by induction on $k$ with $k=2$ being obvious. 

Let $\Gamma$ be a graph such that $\Gamma\to_k G$, but $\Gamma\not\to_k H$, $k\geq 3$.
Let $c$ be a $k$-edge-coloring of $\Gamma$ with no monochromatic $H$. 
Let a graph $\Gamma'$ be obtained from $\Gamma$ by deleting the edges of color $1$.
We have that  $\Gamma'\not\to_{k-1} H$ since $c$ restricted to $\Gamma'$ is a $(k-1)$-coloring with no monochromatic $H$. 
We claim that $\Gamma'\to_{k-1} G$, which, if true, gives $G\not\req_{k-1} H$ and by induction $G\not\req H$, as desired.

Let us assume for the sake of contradiction that $\Gamma'\not\to_{k-1} G$, i.e., there is a $(k-1)$-edge-coloring $c'$ of $\Gamma'$ without monochromatic $G$.
We see that there is a copy of $G$ in color $1$ of $c$, otherwise the coloring $c''$ of $\Gamma$ that is the same as $c'$ on $\Gamma'$ and that colors all other edges with color $1$ has no monochromatic $G$, a contradiction to the fact that $\Gamma\to_k G$.
Repeating the argument above to all colors in $c$, we see that each of them contains $G$.
More generally, we see that any edge-coloring of $\Gamma$ with $k$ colors avoiding monochromatic $H$ must have a monochromatic  $G$ in each color.
However, since $G\subseteq H$, $\Gamma'$ has no monochromatic $H$ under $c'$, and
hence the coloring $c''$ of $\Gamma$ has no monochromatic $H$.
 Thus $c''$ must have monochromatic $G$ in each color, however there is no monochromatic $G$ in any of 
 the colors $2, \ldots, k$, a contradiction. \qed


\section{Conclusions}\label{Conclusions}

This paper addresses Ramsey equivalence of graphs and gives a negative  answer to the question of Fox \textit{et al.}~\cite{Fox_EquivClique}:
``Are there two connected non-isomorphic graphs that are  Ramsey equivalent?'' for 
  wide families of graphs determined by so-called ``clique splitting'' properties and chromatic number. 
  In particular, we find an infinite family of graphs that are not Ramsey equivalent to any other connected graphs. 
This extends the only such  known family consisting of all cliques, paths, and stars.
\medskip
  
  Replacing  $\omega$   with any other ``nice'' Ramsey parameter,   $s$,  generalizes Theorems~\ref{thm::DifferentChrNo} and~\ref{thm::SameChrNo}. 
  Here, we say that a parameter $s$ is a ``nice'' Ramsey parameter, if for any graph $H$ and 
  any Ramsey graph $\Gamma$ for $H$, we have $s(\Gamma)\geq s(H)$ 
  and for all $\epsilon>0$ equality is attained for at least one $\Gamma$ with $\Gamma\epsto H$.
  So, both $\omega$ and $-g_0$ (the negative of the odd girth) are nice Ramsey parameters.
 \medskip
 
There are many questions that remain open in this area. Even the following weaker question is very far from being understood:
``Are there other graph parameters that distinguish graphs in a Ramsey sense?'',  i.e., is there a parameter $s$ such that 
$s(G)\neq s(H)$ implies that $R(G)\not\req R(H)$?  Here, we showed that the chromatic number, $\chi$, is very likely to be 
such a distinguishing parameter by proving this implication for graphs satisfying some additional properties. 
Interestingly enough, it is not clear, but most likely not true that $\chi(G)\neq \chi(H)$ implies that $\cR_\chi(G) \neq \cR_\chi(H)$. 
Indeed,  $\cR_\chi(K_4) = R(K_4) = 18$,   but  the positive answer to the Burr-Erd\H{o}s-Lov\'asz-Conjecture shows that  there is a $5$-chromatic graph $G$ with $R_\chi(G) = 4^2 + 1 = 17$, 
so $\chi(K_4)<\chi(G)$ but $\cR_\chi(K_4)> \cR_\chi(G)$.  We believe that there are infinitely many pairs of graphs $G, H$ of different chromatic number and the same value for $\cR_\chi$.
\medskip

In this paper, we addressed the relation between other types of Ramsey numbers and Ramsey equivalence and got results in terms of multicolor Ramsey numbers.
The following questions are open.  If $R(G,F)\neq R(H,F)$ for some graph $F$, does this imply $G\not\req H$?  For any two  non-isomorphic graphs $G,H$,  is there an integer $k$  such that $R_k(G)\neq R_k(H)$? Here $R_k(G)$ is the smallest integer $n$ such that any coloring of edges of $K_n$ with $k$ colors 
contains a monochromatic copy of $G$.
For example,  we see that $R( P_4+e)=R(K_{1,3})=  6$, and    $R_k(P_4+e)>2k+2=R_k(K_{1,3})$, for odd $k> 3$.
Another question is whether  the fact that  $R_k(G)\neq R_k(H)$ for some $k$ implies that  $G\not\req H$.  We answered the last question in positive only when $G$ is a subgraph of $H$.
\medskip
 
Cliques play a special role in Ramsey theory and got a particular attention in Ramsey equivalence. Still, it is not clear for what graphs is a  clique a  minimal Ramsey graph. 
Specifically,  if the size Ramsey number $R_e(H)$ is less than  $\binom{R(H)}{2}$, does it imply that $K_{R(H)}$ is not a minimal Ramsey graph for $H$?
\medskip

A positive answer to the following question would immediately give a negative answer to the question of Fox \textit{et al.}: ``Is there a graph in the Ramsey class of any connected graph $G$, that does not belong to the Ramsey class of any other connected graph, except for subgraphs of $G$?''
\medskip

It is also not clear how small could be a distinguishing graph for two not Ramsey equivalent graphs. 
Is there a function $f$ such that for any two not Ramsey equivalent graphs $G$, $H$, the smallest order of their distinguishing  graph is at most $f(R(G),R(H))$?
\medskip

 Finally, we show that two trees of different order are not Ramsey equivalent provided that the Erd\H{o}s-S\'os-Conjecture is true or if one of the trees is balanced.
 We do not know whether there are two Ramsey equivalent non-isomorphic trees on the same number of vertices.
  


\newpage


\begin{appendix} 

\edef\currentLemmaCounter{\thelemma}
\setcounter{theorem}{\appendixLemmas}

\section{ Lemmas \ref{lem:minDegG4},  \ref{lem:NoMonoKab},  \ref{lem::minDegG5},  \ref{lem:W4Isolated},  \ref{lem:K23Isolated}.}

\begin{lemma}\label{lem:minDegG4}
  $\cR_{\delta}(Z_4)=1$.
  \end{lemma}
  \begin{proof}
	 Let $\Gamma$ denote the graph obtained from $K_9$ by adding $9$ new vertices and a matching between these and the vertices in $K_9$. 
	 We have $K_9\not\to Z_4$, due to any $4$-factorization, and we shall show $\Gamma\to Z_4$.	
	 Then each minimal Ramsey graph of $Z_4$ contained in $\Gamma$ contains at least one of the vertices of degree $1$ and thus $\cR_{\delta}(Z_4)=1$.
	 
   Consider the copy $K$ of $K_9$ in $\Gamma$ and let $c$ denote a $2$-edge-coloring of $K$ with no monochromatic copy of $Z_4$.
   We will show that there is red and a blue copy of $Z_1$ in $K$ with the same vertex $x$ of degree $4$, see Claim~\ref{G4_Claim2}.
   Then there is a monochromatic copy of $Z_4$ in $\Gamma$ no matter which color is assigned to the edge pendent at $x$.
   Thus $\Gamma\to Z_4$.
   
   \setcounter{claim}{0}
   \begin{claim}\label{G4_Claim1}
    There is no vertex in $K$ with $5$ incident edges of the same color under $c$.
   \end{claim}
   For the sake of contradiction assume $u$ in $K$ has $5$ incident red edges.
   Let $N$ denote the $5$ neighbors of $u$ incident to these edges and $x,y,z$ denote the vertices in $K$ not incident to these edges.
   Then there is at most one red edge between $N$ and each vertex in $\{x,y,z\}$, as otherwise there is a red $Z_4$ in $K$.
   So there are two distinct vertices $v$, $w$ in $N$ such that there are only blue edges between $\{v,w\}$ and $\{x,y,z\}$.
   Since each vertex in $\{x,y,z\}$ is incident to $4$ blue edges to $N$, each of the vertices in $\{x,y,z\}$ is the degree $4$ vertex in a blue copy of $Z_1$ with another vertex from $\{x,y,z\}$ and four vertices from $N$.
   Thus there are only red edges between $u$ and $\{x,y,z\}$ and only red edges within $\{x,y,z\}$, as otherwise there is a blue $Z_4$.
   But then $\{u,x,y,z\}$ forms a red $C_4$ with $3$ red edges pendent at $u$ (those to $N$), a monochromatic $Z_4$, a contradiction.
   This proves Claim~\ref{G4_Claim1}.
   
   \begin{claim}\label{G4_Claim2}
    There is a vertex in $K$ which is the vertex of degree $4$ in a red and a blue copy of $Z_1$ under $c$.
   \end{claim}
   
   By Claim~\ref{G4_Claim1} the red and the blue subgraph of $K$ under $c$ are $4$-regular.
   Consider a vertex $u$ in $K$ and let $N_r$ and $N_b$ denote the sets of neighbors in $K$ adjacent to $u$ via red respectively blue edges.
   If there are vertices $v\in N_b$ and $w\in N_r$ with two red edges between $v$ and $N_r$ and two blue edges between $w$ and $N_b$, then $u$ is the degree $4$ vertex in a red and in a blue copy of $Z_1$ and we are done.
   So we assume that there is at most one blue edge between $N_b$ and each vertex in $N_r$.   
   Since $|N_r|=4$ and the blue subgraph is $4$-regular, each vertex in $N_r$ sends at most $3$ blue edges to the other vertices in $N_r$ and at least one blue edge to $N_b$.
   Hence there is exactly one blue edge between $N_b$ and each vertex in $N_r$ and $N_r$ forms a blue $K_4$.
   If the blue edges between $N_r$ and $N_b$ form a matching, then $N_b$ induces a blue $C_4$ and two independent red edges.
   Then each vertex in $N_b$ is the vertex of degree $4$ in a blue and in a red copy of $Z_1$ and we are done.
   If the blue edges between $N_r$ and $N_b$ do not form a matching then there is vertex $v\in N_b$ with exactly two blue and two red edges between $v$ and $N_r$ (three or four blue edges is not possible).
   Hence $v$ is contained in a blue $C_4$ with three vertices from $N_r$ and there are two blue edges pendent at $v$, one to $u$ and one within $N_b$.
   Moreover $v$ is contained in a red $C_4$ with $u$ and two vertices from $N_r$ and there are two red edges pendent at $v$ within $N_b$.
   This proves Claim~\ref{G4_Claim2}.
%
  \end{proof}
  
  Let $Z_5$ denote the graph obtained from $K_{2,3}$ by adding a pendent edge at a vertex of degree $3$, see Figure~\ref{fig:K23Special}.

\begin{lemma}\label{lem:NoMonoKab}
 In any $2$-edge-coloring of $K_{3,13}-e$ without monochromatic $Z_5$ the vertex of degree $2$ is incident to exactly one red and one blue edge.
\end{lemma}
\begin{proof}
 Let $x$ be the vertex of degree $2$ in $K_{3,13}-e$, let $B$ denote the vertices of degree $3$ and $A=V(K_{3,13}-e)\setminus(B\cup\{x\})$.
 Then $|A|=3$, $|B|=12$ and there is a complete bipartite graph between $A$ and $B$.
 Consider a $2$-edge-coloring of $K_{3,13}-e$ without monochromatic $Z_5$.
 We consider the edges between $A$ and $B$ first.
 Since $|A|=3$, each vertex in $B$ is incident to at least $2$ red or $2$ blue edges by pigeonhole principle.
 Let $B_r\subseteq B$ denote the set of vertices in $B$ incident to at least $2$ red edges and $B_b = B\setminus B_r$ those incident to $2$ blue edges.
 Without loss of generality assume $|B_r|\geq |B_b|$, thus $|B_r|\geq 6$.
 For $v\in B_r$ let $A_v$ denote the vertices in $A$ adjacent to $v$ via red edges.
 Then $|A_v|\geq 2$.
 Assume there is a set $A'\subseteq A$ of size $2$ and distinct vertices $v_1, v_2, v_3\in B_r$ with $A'=A_{v_i}$, $1\leq i\leq 3$.
 Then all edges between $A'$ and $B\setminus \{v_1,v_2,v_3\}$ are blue, as otherwise there is a red $Z_5$.
 But these edges form a blue $K_{2,9}$ which contains $Z_5$, a contradiction.
 Hence for each set $A'\subseteq A$ of size $2$ there are most two vertices $v$ in $B_r$ with $A'=A_{v}$.
 Since $|A|=3$ and $|B_r|\geq 6$, there are exactly $2$ vertices $v\in B_r$ with $A'=A_v$ for each such $A'$ and thus $|B_r| = 6$.
 Hence $|B_b| = |B\setminus B_r| = 6$ and the same arguments applied to $B_b$ and the blue edges show that for each $A'\in \binom{A}{2}$ there are exactly $2$ vertices in $B_b$ adjacent to $A'$ with only blue edges too.
 Now consider the edges incident to $x$.
 If there are $2$ red edges, then together with the neighbors of $x$ and some $3$ vertices from $B_r$ there is a red $Z_5$, a contradiction.
 The same argument holds for the blue edges and hence there is exactly one red and one blue edge incident to $x$.
\end{proof}

\begin{lemma}\label{lem::minDegG5}
 $\cR_{\delta}(Z_5)=1$.
\end{lemma}
\begin{proof}
 Consider the graph $\Gamma$ obtained from a complete bipartite graph on partite sets $X=\{x_1,x_2,x_3\}$ and $W=\{\ell_1,\ell_2,\ell_{3},r_1,r_{2},w\}$ by adding $3$ new vertices and a matching between these and the vertices in $X$.
 See Figure~\ref{fig:K23MinDegree} for an illustration.
 We construct a graph $F$ as follows.
 Let $C$ denote the complete bipartite graph on parts $A$ and $B$ with $|A|=3$ and $|B|=12$.
 For each $i$, $1\leq i \leq 3$, take a copy $C_i$ of $C$ and identify $\{w,\ell_i\}$ with two vertices in the smaller part of $C_i$.
 An illustration is given in Figure~\ref{fig:K23MinDegree}.
 \begin{figure}[htbp]
  \centering
  \includegraphics{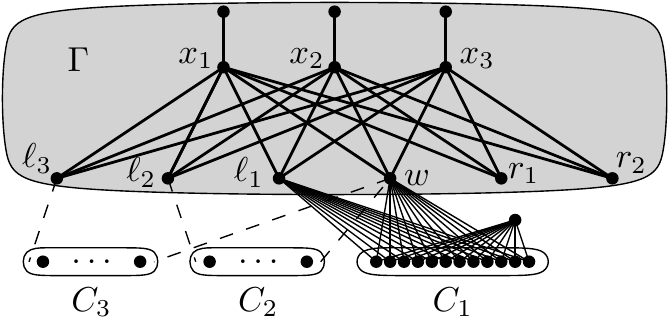}
  \caption{A graph $F$ which is Ramsey for $Z_5$.}
  \label{fig:K23MinDegree}
  \end{figure}
 From now on $\ell_i$ and $w$ refer to the identified vertices.
 We prove $F\to Z_5$ next.
 Consider a $2$-edge-coloring of $F$.
 Then each $x_j\in X$ together with each $C_i$, $1\leq i\leq 3$, induces a copy of $K_{3,13}-e$.
 Hence there is a monochromatic copy of $Z_5$ by Lemma~\ref{lem:NoMonoKab}, if there are $\ell_i$ and $x_j$ such that both edges between $x_j$ and $\{w,\ell_i\}$ are of the same color, $1\leq i,j\leq 3$. 
 So assume that the edges between $x_j$ and $\{w,\ell_i\}$ are of different colors, $1\leq i,j \leq 3$.
 By pigeonhole principle we may assume that there are $2$ vertices in $X$, say $x_1$, $x_2$, such that the edge between $x_j$ and $w$ is red, $j=1,2$.
 Then the edges $x_j\ell_1, x_j\ell_2, x_j\ell_3$ are blue for $j=1,2$.
 Thus there is a blue $K_{2,3}$ between $\{x_1,x_2\}$ and $\{\ell_1, \ell_2, \ell_{3}\}$.
 Hence the edges $x_jr_{1}, x_jr_2$ are red, or otherwise there is a blue $Z_5$.
 Thus there is a red $K_{2,3}$ between $\{x_1,x_2\}$ and $\{w, r_{1}, r_{2}\}$, see Figure~\ref{fig:K23MinDegree_Colored}.
 Altogether there is a monochromatic copy of $Z_5$ no matter which color is assigned to the edge pendent at $x_1$.
 Thus $F\to Z_5$.
  \begin{figure}[htbp]
  \centering
  \includegraphics{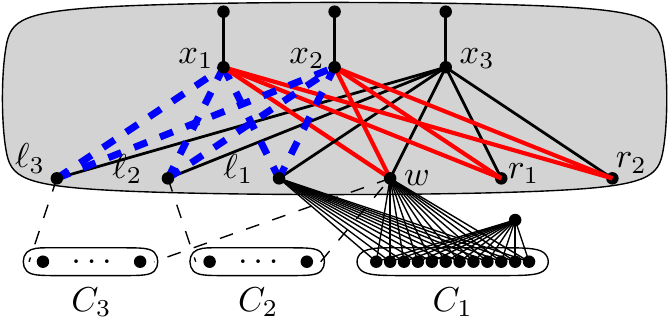}
  \caption{A red and a blue copy of $K_{2,3}$ in a $2$-edge-coloring of $F$ with the same degree $3$ vertices.}
  \label{fig:K23MinDegree_Colored}
 \end{figure}
 
 Let $F'$ denote the graph obtained from $F$ by removing the vertices of degree $1$.
 It remains to show that $F'\not\to Z_5$.
 Then every minimal Ramsey graph of $Z_5$ in $F$ contains at least one of the degree $1$ vertices and hence $\cR_{\delta}(Z_5)=1$.
 Consider the following coloring of $F'$.
 Color all edges between $\{x_1,x_2\}$ and $\{w,r_1,r_2\}$ and all edges between $x_3$ and $\{\ell_1,\ell_2,\ell_3\}$ red.
 Color all other edges between $X$ and $W$ blue.
 Finally color the edges of each $C_i$, $1\leq i\leq 3$, without monochromatic copy of $K_{2,3}$.
 Such a coloring exists since $K_{3,12}\not\to K_{2,3}$~\cite{Fox_MinDegreeRamseyMinimal}.
 Next we show that there is no monochromatic $Z_5$ under this coloring.
 Each copy of $K_{2,3}$ in $F'$ which does not contain any vertex of $X$ is contained in (exactly) one of the $C_i$ and hence is not monochromatic.  
 Moreover each copy of $K_{2,3}$ in $F'$ which contains a vertex from $X$ and a vertex from $V(C_i)\setminus\{w,\ell_i\}$ for some $i$, contains $w$ and $\ell_i$ and hence a red and a blue edge.
 Thus the only monochromatic copies of $K_{2,3}$ in $F'$ are contained in $\Gamma$.
 The part on $2$ vertices of all monochromatic copies of $K_{2,3}$ in $\Gamma$ is contained in $X$.
 Hence there is no monochromatic copy of $Z_5$ because each $x \in X$ is incident to exactly $3$ red and $3$ blue edges.
\end{proof}

The wheel $W_4$ is the graph on $5$ vertices obtained from a cycle $C_4$ of length $4$ by adding a new vertex adjacent to all vertices of the cycle.

\begin{lemma}\label{lem:W4Isolated}
 If $H$ is connected and has at least $6$ vertices, then $H\not\req W_4$.
\end{lemma}
\begin{proof}
 We assume $\omega(H)=3=\omega(W_4)$ due to Lemma~\ref{lem::RamseyGraphSameCliqueNo} and $\chi(H)\geq \chi(W_4)=3$ due to Observation~\ref{obs::bipartite}.
 Let $\eps=2^{-5}$ and let $F$ be a graph with $F\epsto C_4$ and $\omega(F) = 2$, which exists by Lemma~\ref{lem::epsRamsey}.
 We construct a graph $\Gamma$ by taking the vertex disjoint union of $F$ and a copy $K$ of $K_5$ and placing a complete bipartite graph between $F$ and $K$.
 We shall show that $\Gamma \to W_4$, but $\Gamma \not\to H$. 
 
 Color all edges within $F$ and within $K$ red and all other edges blue.
 Since $\omega(F) = 2 < \omega(H)$, $H\not\subseteq F$.
 Since $|V(H)| \geq 6$, $H\not\subseteq K$.
 Since $H$ is connected there is no red copy of $H$.
 The blue subgraph is a complete bipartite graph and $\chi(H) \geq 3$.
 Thus there is no blue copy of $H$.

 It remains to show that $\Gamma\to W_4$.
 Consider a $2$-edge-coloring of $\Gamma$.
 By the Focusing Lemma (Lemma~\ref{lem::focusing}) there is a set $V$ of $2^{-5}|V(F)| = \epsilon |V(F)|$ vertices in $F$ such that between $V$ and each vertex in $K$ all edges are of the same color.
 Since $F\epsto C_4$ there is a monochromatic  copy $C$ of $C_4$ in $F[V]$.
 Assume without loss of generality that $C$ is blue. 
 If there is a vertex in $K$ which sends a blue star to $C$ then there is a blue $W_4$ and we are done.
 So assume all vertices in $K$ send red stars to $C$.
 If there is no blue copy of $W_4$ in $K$, then there are two adjacent red edges $e$ and $f$ in $K$ (since the complement of $W_4$ in $K_5$ is a maximum matching).
 Then $e$, $f$ and any two vertices from $C$ form a red copy of $W_4$, with the vertex of degree $4$ being the common vertex of $e$ and $f$, see Figure~\ref{fig::W4Gamma}.
 Hence $\Gamma\to W_4$.
 \begin{figure}[htbp]
 \centering
 \includegraphics{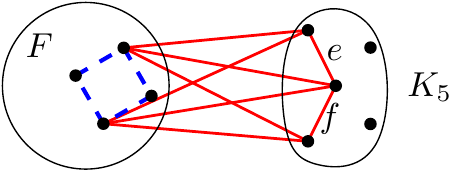}
 \caption{A red (solid lines) copy of $W_4$ in some $2$-edge-coloring of $\Gamma$.}
 \label{fig::W4Gamma}
\end{figure}
\end{proof}

\begin{lemma}\label{lem:K23Isolated}
 If $H$ is connected and has at least $6$ vertices, then $H\not\req K_{2,3}$.
\end{lemma}
\begin{proof}
 We assume that $H$ is bipartite and contains a cycle by Observation~\ref{obs::bipartite} and Lemma~\ref{lem::extremalNo}, respectively.
 Since $K_{3,13}\to K_{2,3}$~\cite{Fox_MinDegreeRamseyMinimal} we assume $K_{3,13}\to H$, since otherwise $H\not\req K_{2,3}$.
 Hence one of the partite sets of $H$ contains at most $2$ vertices, since otherwise coloring the edges of $K_{3,13}$ with a red copy of $K_{2,13}$ and an edge disjoint blue copy of $K_{1,13}$ does not yield a monochromatic copy of $H$.   
 So $H$ is $K_{2,b}$ with some edges pendent at the part of size $2$ for some $b\in\NN$ with $b\geq 2$.
 
 Suppose $H$ has at least $8$ vertices.
 We claim $R(H)>10$.
 Indeed, color the edges of a $K_{5,5}$ in $K_{10}$ red and all other edges blue.
 Then the blue edges form vertex disjoint copies of $K_5$ and do not contain $H$ since $H$ is connected.
 The red subgraph does not contain $H$ because one of the bipartition classes of $H$ has at least $6$ vertices (as the other has only $2$).
 Hence $H\not\req K_{2,3}$ because $R(K_{2,3})=10$~\cite{hendry_Ramsey5Vertices}.
 
 Suppose $|V(H)|\in\{6,7\}$.
 Then $H$ is isomorphic to one of the graphs $Z_i$ given in Figure~\ref{fig:K23Special} or a supergraph of $Z_5$.
 Note that $Z_5$ contains $K_{2,3}$.
 We have $R(Z_1)=7$, $R(Z_2)=8$ by~\cite{Ramsey6Edges} and $R(Z_3)= 9$, $R(Z_4)=10$ by~\cite{Ramsey7Edges}.
 Thus $H\not\req K_{2,3}$ if $H$ is isomorphic to one of the graphs $Z_i$, $1\leq i\leq 3$, because $R(K_{2,3})=10$.
 Moreover $H\not\req K_{2,3}$ if $H$ is isomorphic to $Z_4$ or $Z_5$ because $\cR_{\delta}(Z_4)=\cR_{\delta}(Z_5)=1$ by Lemma~\ref{lem:minDegG4} respectively Lemma~\ref{lem::minDegG5}, but $\cR_{\delta}(K_{2,3})\geq \delta(K_{2,3})=2$.
 So assume $H$ is a supergraph of $Z_5$.
 Let $F$ denote a minimal Ramsey graph of $Z_5$ with $\delta(F)=1$ and obtain a graph $F'$ by removing a vertex of degree $1$ from $F$.
 Then $F\to K_{2,3}$ because $K_{2,3}\subseteq Z_5$ and $F'\to K_{2,3}$ (since $F$ is not minimal Ramsey for $K_{2,3}$). But $F'\not\to Z_5$, thus $F'\not\to H$, and hence $H\not\req K_{2,3}$.
\end{proof}

\setcounter{theorem}{\currentLemmaCounter}


\section{Small Distinguishing Graphs}\label{sec::ProofsSmallLemmas}

\begin{lemma}\label{lem::H52P4e}
$H_{5,2}\to P_4+e$.
\end{lemma}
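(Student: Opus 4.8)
The plan is to show that every red/blue coloring of $H_{5,2}$ contains a monochromatic $P_4+e$, where $P_4+e$ is the five-vertex tree obtained from a path $a$--$b$--$c$--$d$ by attaching a pendant edge at $b$ (equivalently, a claw $K_{1,3}$ with one edge subdivided). Label the clique part as $K_5$ on $\{v_1,\dots,v_5\}$ and let $s$ be the degree-$2$ vertex, adjacent to $v_1$ and $v_2$. The useful reformulation is that a monochromatic copy of $P_4+e$ is nothing but a monochromatic claw one of whose legs can be extended by a further monochromatic edge: a red copy exists as soon as some vertex has three red neighbours, one of which has an additional red edge leaving the resulting claw. The two extra edges $sv_1,sv_2$, which raise the degrees of $v_1$ and $v_2$ to $5$, are what I would exploit to force such a configuration.

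First I would reduce to the clique. If there were no monochromatic $P_4+e$, then in particular neither color class restricted to the $K_5$ could contain $P_4+e$, so the $10$ edges of $K_5$ are partitioned into two $P_4+e$-free (``fork-free'') graphs. The next step is a short classification: up to swapping colors and relabeling vertices, the only partitions of $E(K_5)$ into two fork-free graphs are $\{C_5,C_5\}$ and $\{K_{1,4},K_4\}$. This follows from a degree argument. A fork-free graph on five vertices having a vertex of degree $4$ must be the star $K_{1,4}$ (any edge among the leaves would extend the claw). If one color class has a vertex $z$ of degree exactly $3$ with neighbours $a,b,c$, then fork-freeness forbids any edge of that color from $\{a,b,c\}$ to the fifth vertex $w$, so $w$ is isolated in that color and hence has degree $4$ in the other color, returning to the star case. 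Finally, if both classes have maximum degree at most $2$, then since the two degrees sum to $4$ at every vertex, both classes are $2$-regular, i.e.\ each is a single $C_5$.

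It then remains to check that in each of these two partitions, inserting the edges $sv_1,sv_2$ (with either coloring) creates a monochromatic $P_4+e$. For $\{C_5,C_5\}$: whichever color $sv_i$ receives gives $v_i$ three edges of that color, and since $C_5$ is triangle-free the two cycle-neighbours of $v_i$ together with $s$ form a monochromatic claw one of whose legs extends along the cycle to a new vertex. For $\{K_{1,4},K_4\}$: one distinguishes whether the star center lies in $\{v_1,v_2\}$, and in each subcase a single one of $sv_1,sv_2$ already produces either a star with an extendable leg (if colored in the star's color) or a claw inside the $K_4$ with an extending pendant (if colored in the $K_4$'s color).

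I expect the main obstacle to be precisely this last bookkeeping together with the classification: because $s$ has degree $2$, edges incident to $s$ exist only at $v_1$ and $v_2$, so one must track carefully which vertex plays the role of claw-center and verify that the extending edge actually exists in $H_{5,2}$. Showing that the single low-degree vertex $s$ breaks \emph{every} fork-free decomposition of $K_5$ is where all the case work concentrates; as a sanity check, attempting to extend either $C_5/C_5$ or $K_{1,4}/K_4$ by $sv_1,sv_2$ in any way does indeed create a monochromatic $P_4+e$, which is what makes the reduction to these two decompositions the efficient route. (An alternative, parallel to the main-text argument that $H_{5,4}\to P_4+e$, is a direct pigeonhole on a degree-$5$ vertex $v_1$: a red claw at $v_1$ either extends into the dense $K_5$ to give a red $P_4+e$, or forces a blue $K_{2,3}$-type structure on the remaining vertices containing a blue $P_4+e$; the same care about edges at $s$ is needed there.)
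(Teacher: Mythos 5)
Your proof is correct, but it takes a genuinely different route from the paper's. The paper argues directly on $H_{5,2}$: it normalizes one edge at the degree-$2$ vertex $u$ to be blue and then runs four cases on the colors of the edge $vw$ between the two neighbours of $u$ and of the remaining edges at $v$, exhibiting a monochromatic $P_4+e$ in each case. You instead first prove a standalone structural fact --- that the only $2$-edge-colorings of $K_5$ with no monochromatic $P_4+e$ are, up to symmetry, $C_5/C_5$ and $K_{1,4}/K_4$ --- and then verify that the two pendant edges at the degree-$2$ vertex destroy both decompositions. I checked your classification (the degree argument is complete: a degree-$4$ vertex forces the star, a degree-$3$ vertex forces an isolated vertex in that color and hence reduces to the star case, and otherwise both classes are $2$-regular) and the two final verifications, including the subtle subcase where the star is centered at one of $v_1,v_2$, in which only the \emph{other} pendant edge produces the monochromatic copy; everything goes through. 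Your approach is longer but more modular: it isolates the extremal colorings of $K_5$ as a reusable lemma and makes transparent exactly why the single degree-$2$ vertex suffices, whereas the paper's direct case analysis is shorter but ad hoc. The only caution is that the final bookkeeping in your write-up is compressed to a sentence; in a full write-up you should spell out, for the $K_{1,4}/K_4$ decomposition, which of $sv_1,sv_2$ is used in each subcase and where the extending edge of the chair lives.
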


\begin{proof}
Let $u$ denote the vertex of degree $2$ in $H_{5,2}$ and let let $e=vw$ denote the edge incident to both neighbors of $u$. 
Let $x$, $y$, $z$ denote the other vertices.
Assume there is a $2$-edge-coloring of $H_{5,2}$ without monochromatic $P_4+e$.
Without loss of generality assume the edge $uv$ is blue.

If (\textbf{Case 1}) $e$ and two more edges $vx$, $vy$ incident to $v$ are red, then either there is a red $P_4+e$ containing these edges or $zx$, $zy$, $zw$ and $uw$ are blue. But then these form a blue copy of $P_4+e$.

If (\textbf{Case 2}) $e$ and at most one other edge incident to $v$ is red, then assume $vx$, $vy$ are blue.
Then the edges $wx$, $wy$, $zx$, $zy$ and $uw$ must be red, but these contain a copy of $P_4+e$.

If (\textbf{Case 3}) $e$ and $vx$ are blue, then $wy$, $wz$, $xy$ and $xz$ must be red.
But then the edges $vy$, $vz$ and $uw$ must be blue, but contain a copy of $P_4+e$.

If (\textbf{Case 4}) $e$ is blue but all the edges $vx$, $vy$, $vz$ are red, then $wx$, $wy$, $wz$ are blue.
But together with $uv$ and $vw$ this is a blue $P_4+e$.
\end{proof}

\begin{lemma}\label{lem::H53P5}
$H_{5,3}\to P_5$.
\end{lemma}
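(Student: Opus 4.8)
The plan is to reduce everything to the $2$-colorings of the clique $K_5\subseteq H_{5,3}$. Write the $K_5$ on $\{a,b,c,d,e\}$ and let $u$ be the vertex of degree $3$, adjacent to $a,b,c$. Suppose, for contradiction, that some $2$-edge-coloring of $H_{5,3}$ has no monochromatic $P_5$. In particular the induced $K_5$ carries a $2$-coloring with no monochromatic $P_5$; since a $P_5$ in $K_5$ is a Hamiltonian path, this means neither color class spans a Hamiltonian path of $K_5$. I would first classify such colorings and then show that attaching $u$ always creates a monochromatic $P_5$, a contradiction.

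For the classification I would use the Dirac-type fact that every graph on $5$ vertices with minimum degree at least $2$ has a Hamiltonian path (see~\cite{West}). Hence if a color class has no Hamiltonian path it has a vertex of degree at most $1$ in that color, so both color classes have minimum degree at most $1$ in $K_5$. If some color class has a vertex of degree $0$, then that vertex is universal in the other color, and a short analysis of a color-universal vertex shows the other color restricted to the remaining four vertices is $K_4$ or $K_4-e$. The remaining possibility, that both color classes have minimum degree exactly $1$ (no color-universal vertex), I expect to be the only delicate point: here one fixes a red-degree-$1$ vertex $v$ with red neighbor $r$ and blue neighbors $x,y,z$, observes that the blue graph on $\{r,x,y,z\}$ can have no Hamiltonian path (otherwise $v$ extends it to a blue $P_5$), so this blue graph is disconnected or a star, and then a finite check of these few configurations produces either a monochromatic Hamiltonian path or a color-universal vertex, each a contradiction. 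So, up to swapping colors, the coloring of $K_5$ has a blue-isolated (hence red-universal) vertex $t$, and the other four vertices span $K_4$ or $K_4-e$ in blue.

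With this structure I would attach $u$. In both $K_4$ and $K_4-e$ every vertex is an endpoint of a spanning path, so each of the four non-$t$ vertices is the endpoint of a blue $P_4$ on those four vertices. If any edge from $u$ to a non-$t$ vertex $g\in\{a,b,c\}$ is blue, prepending $u$ to such a blue $P_4$ ending at $g$ gives a blue $P_5$. Otherwise all edges from $u$ to $\{a,b,c\}\setminus\{t\}$ are red, and since $t$ is red-universal I would exhibit an explicit red $P_5$: if $t\in\{d,e\}$, say $t=d$, then $c\,u\,a\,d\,e$ is red (using $cu,ua$ red and $ad,de$ red); if $t\in\{a,b,c\}$, say $t=a$, then $d\,a\,b\,u\,c$ is red (using $da,ab$ red and $bu,uc$ red). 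Either way we contradict the assumption, proving $H_{5,3}\to P_5$. Note this is stronger than the $H_{5,4}\to P_5$ needed in the text, since $H_{5,3}\subseteq H_{5,4}$.

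The main obstacle is the classification step, specifically ruling out $2$-colorings of $K_5$ in which both color classes have minimum degree exactly $1$ and no monochromatic Hamiltonian path; once the coloring is pinned down to the red-universal-vertex form, attaching $u$ is routine. An alternative, more computational route avoids the classification entirely: start from two equally colored edges $ua,ub$ (which exist by pigeonhole), try to extend the red path $a\,u\,b$ to a red $P_5$ through $\{c,d,e\}$, and in the few ways this can fail use the resulting blue edges to build a blue $P_5$ directly; I expect this to work but to require more case bookkeeping than the structural argument above.
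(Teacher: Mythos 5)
Your argument is correct, but it takes a genuinely different route from the paper's. The paper never classifies the colorings of the $K_5$: it fixes two equally colored edges $ux,uy$ at the degree-$3$ vertex (your ``alternative, more computational route'') and finishes in two short cases, using the observation that a blue $K_{2,3}$ or a blue $C_4$ with a pendant blue edge already contains a $P_5$. You instead first determine all $P_5$-free $2$-colorings of $K_5$ via the Dirac-type bound $\delta\geq (n-1)/2$ for traceability, concluding that every such coloring has a color-universal vertex $t$ with the remaining four vertices spanning $K_4$ or $K_4-e$ in the other color, and then attach $u$. I checked the parts you only sketch: in your Case B, the blue graph on $\{r,x,y,z\}$ with no spanning path is indeed disconnected or $K_{1,3}$, and each of the resulting configurations (star centered at $r$ or at a vertex of $\{x,y,z\}$; a blue perfect matching on the four vertices; an isolated vertex of $\{x,y,z\}$) yields a monochromatic $P_5$ or forces $r$ to be red-universal, so the elimination closes as claimed --- but this is the one step that would need to be written out in full, and it costs roughly as much case work as the paper's entire proof. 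What your approach buys is a reusable structural fact (the exact list of extremal colorings of $K_5$ for $P_5$, in the spirit of Lemmas~\ref{lem::K6G1}, \ref{lem::K6G3} and \ref{lem::K8G123} in the appendix) and a cleaner final step, since every vertex of $K_4$ and of $K_4-e$ is an endpoint of a spanning path; what the paper's approach buys is brevity. Your closing remark is also right: the statement for $H_{5,3}$ is formally stronger than the $H_{5,4}\to P_5$ invoked in Remark~\ref{smallDistinguishingGraphs}, since $H_{5,3}\subseteq H_{5,4}$.
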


\begin{proof}
 Assume there is $2$-edge-coloring of $H_{5,3}$ without monochromatic $P_5$.
 Let $u$ denote the vertex of degree $3$ in $H_{5,3}$, let $x$, $y$, $z$ denote its neighbors and $v$, $w$ the remaining vertices.
 There are two edges of the same color incident to $u$, assume $ux$, $uy$ are red.
 Since there is no monochromatic $K_{2,3}$ (it contains $P_5$) there is at least one edge from $\{x,y\}$ to $\{v,w,z\}$ in red.
  
 If (\textbf{Case 1}) there are $r,r'\in\{v,w,z\}$ such that the edges $xr$, $yr'$ are red, then $r=r'$.
 Then all edges from $\{x,y\}$ to $\{v,w,z\}\setminus\{r\}$ are blue.
 But then the edge from $r$ to $\{v,w,z\}\setminus\{r\}$ can be neither red nor blue.
 
 If (\textbf{Case 2}), without loss of generality, $x$ has only blue edges to $\{v,w,z\}$, then $yp$ is red for some $p\in\{v,w,z\}$ and all edges from $p$ to $\{v,w,z\}\setminus\{p\}$ are blue.
 This yields a blue $C_4$ on $\{x,v,w,z\}$.
 Since all edges which are incident to this $C_4$ (but not contained) are red we can find a red $P_5$.
\end{proof}

\begin{lemma}\label{lem::K55C4}
$K_{5,5}\to C_4$.
\end{lemma}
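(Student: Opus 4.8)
The plan is to show that in any $2$-edge-coloring of $K_{5,5}$ one color class already contains a $C_4$, via a double-counting (``cherry-counting'') argument. Write $A$ and $B$ for the two parts, each of size $5$. The key reformulation is that a monochromatic $C_4$ is exactly a pair of vertices of $A$ together with a pair of vertices of $B$ all four of whose connecting edges receive the same color; equivalently, two vertices of $A$ having two common neighbors in $B$ each joined to both by edges of one fixed color.

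First I would reduce to a single color. Since $K_{5,5}$ has $25$ edges, one of the two color classes, say red, has at least $\lceil 25/2\rceil = 13$ edges. I will work entirely inside the red subgraph $R$ from now on; finding a red $C_4$ suffices.

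Next comes the counting step. For $b \in B$ let $d_b$ denote its red degree. A \emph{red cherry} is a path $a_i b a_j$ of length $2$ with center $b \in B$ and endpoints $a_i,a_j \in A$ both joined to $b$ in red; the number of red cherries equals $\sum_{b \in B} \binom{d_b}{2}$. Since $\sum_{b} d_b = |E(R)| \ge 13$ and the function $x \mapsto \binom{x}{2}$ is convex, this sum is minimized when the $d_b$ are as equal as possible, i.e.\ at $(d_b) = (3,3,3,2,2)$, which yields $\sum_b \binom{d_b}{2} \ge 3\binom{3}{2} + 2\binom{2}{2} = 11$.

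Finally I would apply pigeonhole over the endpoint pairs. Each red cherry determines an unordered pair $\{a_i,a_j\} \subseteq A$, and there are only $\binom{5}{2}=10$ such pairs. As $11 > 10$, two distinct cherries share the same endpoint pair; their two centers are then two distinct common red neighbors of $a_i$ and $a_j$, so $a_i, a_j$ together with these two centers form a red $C_4$. I expect the only delicate point to be the numerics: the whole argument hinges on the slack $11 > 10$, so the convexity bound must be evaluated at the balanced distribution $(3,3,3,2,2)$ rather than at an extreme one. (An essentially equivalent route first uses pigeonhole to find three vertices of $A$ whose majority color agrees, say red with at least $3$ red neighbors each, and then counts cherries among just these three, where $\sum_b\binom{d_b}{2}\ge 4 > 3 = \binom{3}{2}$ closes the argument with even smaller numbers.)
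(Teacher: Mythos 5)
Your proof is correct, but it takes a different route from the paper's. You prove the stronger, purely extremal statement that any bipartite graph on $5+5$ vertices with at least $13$ edges contains a $C_4$ (i.e.\ $z(5,5;2,2)\le 12$), via the standard K\H{o}v\'ari--S\'os--Tur\'an cherry count: the convexity bound $\sum_b\binom{d_b}{2}\ge 11>10=\binom{5}{2}$ checks out, and two cherries on the same endpoint pair necessarily have distinct centers, so the red $C_4$ is genuine. The paper instead argues with both colors at once: it fixes one vertex $v$, takes three same-colored (say red) edges $vx,vy,vz$, observes that any other vertex of $v$'s side with two red edges into $\{x,y,z\}$ closes a red $C_4$ with $v$, and otherwise each of the remaining four vertices contributes a blue pair inside $\{x,y,z\}$, so two of them collide on one of the $\binom{3}{2}=3$ pairs and give a blue $C_4$. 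The paper's argument is shorter and needs no convexity optimization; yours is more robust (it yields the extremal fact $\mathrm{ex}$-style, independent of how the $13$ edges arose, and generalizes immediately to larger complete bipartite hosts). Your parenthetical variant, which first finds three vertices of $A$ sharing a majority color and then counts cherries only on those three, sits between the two: it is still a one-color counting argument, whereas the paper's pivot genuinely switches colors midway.
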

\begin{proof}
Consider a $2$-edge-coloring of the edges of $K_{5,5}$ and vertex $v$.
Let $V$ denote the partite set of $K_{5,5}$ containing $v$.
Then $v$ is incident to three edges $vx$, $vy$, $vz$ of the same color, say red.
From each of the four vertices in $V\setminus\{v\}$ at most one edge to $\{x,y,z\}$ is red, otherwise there is a red $K_{2,2}$.
But then there are two vertices in $\{x,y,z\}$ and two vertices in $V\setminus\{v\}$ forming a blue $K_{2,2}$ by pigeonhole principle.
\end{proof}

\begin{figure}[htbp]
 \centering
 \includegraphics{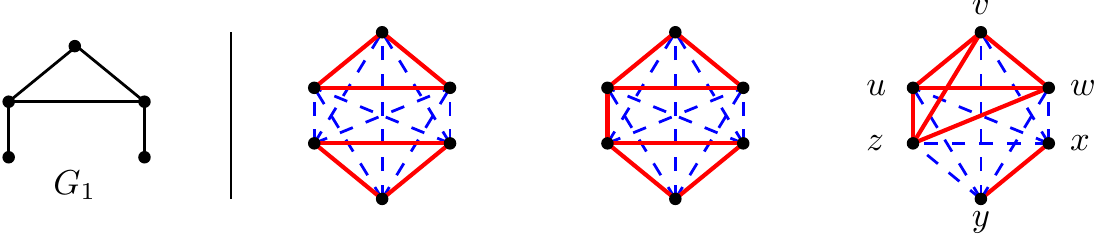}
 \caption{All possible $2$-colorings of $K_6$ without monochromatic $G_1$.}
 \label{fig::K6G1}
\end{figure}

\begin{lemma}\label{lem::K6G1}
 Each $2$-edge-coloring of $K_6$ without monochromatic $G_1$ equals to one of the colorings given in Figure~\ref{fig::K6G1}, up to isomorphism and renaming colors.
\end{lemma}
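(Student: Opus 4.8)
The plan is to rephrase the lemma as a statement about a single graph. A $2$-edge-coloring of $K_6$ with no monochromatic $G_1$ is the same datum as a graph $F$ on the six vertices, namely the red subgraph, such that neither $F$ nor its complement $\overline{F}$ (the blue subgraph) contains $G_1$. Renaming the two colors corresponds to replacing $F$ by $\overline{F}$, and isomorphic colorings correspond to isomorphic graphs; so it suffices to determine, up to isomorphism and complementation, all graphs $F$ on six vertices with $G_1 \not\subseteq F$ and $G_1 \not\subseteq \overline{F}$. As $|E(F)| + |E(\overline{F})| = \binom{6}{2} = 15$ is odd, after possibly swapping colors I may assume $|E(F)| \le 7$. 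Note that the family is nonempty precisely because $R(G_1) = 9 > 6$.

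The key structural reduction I would use is that $G_1$ has five vertices while $K_6$ has six. Hence $F$ contains $G_1$ if and only if, for one of the six $5$-element subsets $S \subseteq V(K_6)$, the induced subgraph $F[S]$ contains $G_1$ as a spanning subgraph; and the same holds for $\overline{F}$. This turns the global condition into six local conditions on $5$-vertex induced subgraphs, each of which is a trivial finite check against the fixed graph $G_1$. In particular every induced red $5$-vertex subgraph, and simultaneously its blue complement, must avoid $G_1$.

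With this in hand I would run a structured case analysis, organized by the maximum degree $\Delta$ of the red graph $F$ and anchored at a fixed vertex $v$ of that degree. For each admissible $\Delta$ I build the coloring outward from $v$: examining the six $5$-subsets that omit each vertex in turn, I force each still-undetermined edge by the rule ``this edge must be blue, otherwise some red $F[S]$ contains $G_1$'' or its blue analogue, and I branch only when a genuine binary choice survives. Because each branch point is a forced binary alternative, the enumeration is exhaustive, and the leaves that never create a monochromatic $G_1$ are exactly the admissible colorings. Finally I check directly that each such leaf avoids monochromatic $G_1$ and, after identifying leaves that agree via the isomorphism invariant given by the pair of degree sequences of $F$ and $\overline{F}$, confirm that the surviving list coincides with Figure~\ref{fig::K6G1}.

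The main obstacle will be bookkeeping, not any isolated difficulty: the analysis must be at once exhaustive, so that no admissible coloring is overlooked, and non-redundant, so that colorings produced along different branches are recognized as isomorphic. I would control the first by only branching on forced binary alternatives and by repeatedly invoking the six local $5$-subset conditions to terminate dead branches early, and the second by canonicalizing each candidate through the joint degree sequences of its red and blue graphs before matching it against the figure.
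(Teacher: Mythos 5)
Your proposal is a search strategy, not a proof: for a lemma of this kind the entire mathematical content \emph{is} the case analysis, and you never carry it out. Everything you write before the analysis (red graph $F$ versus complement, WLOG $|E(F)|\le 7$, reduction to the six $5$-vertex induced subgraphs) is correct but essentially free; the claim that the surviving colorings are exactly the three of Figure~\ref{fig::K6G1} is asserted at the end without any of the forcing steps actually being exhibited, so nothing is verified. Two further concrete concerns with the plan as stated. First, you propose to merge leaves of the search tree ``via the isomorphism invariant given by the pair of degree sequences of $F$ and $\overline{F}$'' --- degree sequences are not a complete isomorphism invariant, so this canonicalization could conflate non-isomorphic colorings or fail to match a leaf against the figure; you would need an actual isomorphism check at the leaves. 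Second, anchoring the analysis at a vertex of maximum red degree gives you very little structural traction on $G_1$ (a triangle with two independent pendant edges) and is likely to produce a large, error-prone case tree.

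The paper's proof anchors differently and much more efficiently: since $R(K_3)=6$, every $2$-coloring of $K_6$ contains a monochromatic triangle, so one may assume a red triangle $K=\{u,v,w\}$. The structure of $G_1$ then immediately forbids two independent red edges from $K$ to $K^c$, and a short chain of forced deductions (a vertex of $K$ with two red edges to $K^c$ forces a blue $K_{2,3}$, which forces $K^c$ to be red, etc.) collapses the problem to a handful of cases, each landing on one of the three colorings. If you want to salvage your approach, adopt the monochromatic triangle as the anchor and then actually write out the forced edges; as it stands, the argument has a gap where the proof should be.
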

\begin{proof}
 Consider a $2$-edge-coloring of $K_6$ on vertices $u,v,w,x,y,z$ without monochromatic $G_1$.
 We may assume that $K=\{u,v,w\}$ forms a red $K_3$ since $R(K_3)=6$.
 Clearly there are no two independent red edges between $K$ and $K^c=\{x,y,z\}$.
 If there is a vertex in $K$ incident to at least two red edges to $K^c$, then all edges between the two other vertices in $K$ and $K^c$ are blue.
 Thus these blue edges form a blue $K_{2,3}$, and hence no edge in $K^c$ is blue.
 But if all edges in $K^c$ are red, then there is a red $K_3$ with two vertices in $K^c$ and one vertex in $K$ and a pending red edge in $K$ and a pending in $K^c$.
 So we may assume that at most one vertex in $K^c$ is adjacent to $K$ in red, say $z$.
 Then $\{x,y\}$ and $K$ form a blue $K_{2,3}$.
 This shows that $xy$ is red. 
 We consider the cases how many edges between $x,y$ and $z$ are red.
 
 If (\textbf{Case 1}) $xy$ is the only red edge, then $\{x,y\}$ and $\{u,v,w,z\}$ induce a blue $K_{2,4}$ and any additional blue edge within this $K_{2,4}$ yields a blue $G_1$.
 Hence the red edges form a $K_4$ plus disjoint $K_2$, which corresponds to the rightmost coloring of Figure~\ref{fig::K6G1}.
 
 If (\textbf{Case 2}) there are at least two red edges, then $z$ is not part of any red $K_3$ on $\{u,v,w,z\}$, since this $K_3$ would have a red pending edge in $K$ and another one in $K^c$.
 Hence there is at most one red edge from $z$ to $K$.
 Thus there is a blue copy of $K_{3,3}-e$ between $K$ and $K^c$.
 This shows that no edge in $K^c$ is blue and the coloring corresponds to the left or the middle coloring in Figure~\ref{fig::K6G1}.
\end{proof}

\begin{lemma}\label{lem::gammaG1}
 $\Gamma\to G_1$.
\end{lemma}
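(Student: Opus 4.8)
The plan is to assume, for contradiction, a $2$-edge-coloring $c$ of $\Gamma$ with no monochromatic copy of $G_1$, and to exploit the complete list of ``bad'' colorings of $K_6$ provided by Lemma~\ref{lem::K6G1}. Recall that $G_1$ is the bull, a triangle with two pendant edges attached at two distinct vertices (see Figure~\ref{fig::5Vert}); so avoiding a red $G_1$ means that for every red triangle $T$ the red edges leaving $T$ do not contain two independent edges meeting $T$ in distinct vertices, equivalently these red edges are all covered by a single vertex. First I would record two elementary extension facts that follow from this: (a) a red $K_4$ together with \emph{any} single red edge leaving it already contains a red $G_1$, and (b) a red triangle together with two red edges leaving it at two distinct triangle-vertices and with distinct far endpoints contains a red $G_1$. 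The same holds with the colors swapped.

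Next I would locate a copy of $K_6$ inside $\Gamma$ (which $\Gamma$ contains by construction, see Figure~\ref{fig::graphGamma}) and apply Lemma~\ref{lem::K6G1}: the restriction of $c$ to this $K_6$ is, up to isomorphism and renaming colors, one of the three colorings of Figure~\ref{fig::K6G1}, namely (i) two vertex-disjoint red triangles with a blue $K_{3,3}$ between them, (ii) the same with one extra red edge between the triangles and a blue $K_{3,3}-e$ between them, or (iii) a red $K_4\cup K_2$ with a blue $K_{2,4}$. Thus inside the $K_6$ the monochromatic substructures are triangle-only in cases (i) and (ii), and a single red $K_4$ plus a bipartite blue part in case (iii). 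The core of the argument is then to use the edges of $\Gamma$ joining this $K_6$ to the remaining vertices of $V(\Gamma)$, together with facts (a) and (b), to produce a monochromatic bull and reach a contradiction.

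I expect case (iii) to be the main obstacle. There the red $K_4$ forces, by fact (a), that \emph{every} edge of $\Gamma$ leaving the $K_4$ is blue; these blue edges tend to create blue triangles with the outside vertices, but the blue part inside the $K_6$ is the triangle-free $K_{2,4}$, so a blue bull can only be assembled through the outside vertices. One must check that the adjacencies of $\Gamma$ indeed force either such a blue triangle carrying two further blue pendants, or a red leaving edge that re-triggers fact (a). Cases (i) and (ii) should be easier, since each color already supplies monochromatic triangles inside the $K_6$, so I would only need one or two suitably colored edges from such a triangle to the outside vertices, which the degrees prescribed in Figure~\ref{fig::graphGamma} guarantee. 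Carrying out these case checks by directly following the incidences of the outside vertices in $\Gamma$ completes the proof that every coloring contains a monochromatic $G_1$, that is, $\Gamma\to G_1$.
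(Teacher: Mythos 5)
Your overall strategy is exactly the paper's: assume a coloring $c$ of $\Gamma$ with no monochromatic $G_1$, restrict to the copy $K$ of $K_6$, invoke Lemma~\ref{lem::K6G1} to reduce to the three colorings of Figure~\ref{fig::K6G1}, and then derive a contradiction from the edges joining $K$ to the three outside vertices. Your preliminary facts (a) and (b) are correct and are implicitly what the paper uses. However, the proposal stops at the point where the actual content of the proof begins: in each of the three cases you describe what ``must be checked'' or what the degrees of Figure~\ref{fig::graphGamma} ``should guarantee,'' but you do not carry out any of these checks, and they are not routine. Each one hinges on the specific adjacency pattern between the $K_6$ and $\{x,y,z\}$ in $\Gamma$ (which vertices of $K$ each of $x,y,z$ sees, and how the pairs $u_1u_2$, $v_1v_2$, $w_1w_2$ are matched to them), not merely on degree counts.

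Concretely, in the two-disjoint-triangles case the paper uses that $\Gamma$ contains three vertex-disjoint triangles, each with one vertex in each red $K_3$ of $K$ and one vertex in $\{x,y,z\}$: a blue such triangle extends to a blue $G_1$ via the blue $K_{3,3}$, so each contributes a red edge from $K$ to the outside, and by pigeonhole one red $K_3$ acquires two independent red pendant edges. In the case with the extra red edge $e$, the paper finds two vertices of $K$ not on $e$ that each send two blue edges to $\{x,y,z\}$ but share only one common neighbour there, yielding a blue $G_1$ through the blue $K_{3,3}-e$. In the $K_4\cup K_2$ case (your case (iii)), after forcing all edges from the red $K_4$ to the outside to be blue and all edges from $e$ to $\{x,y,z\}$ to be red, the argument splits on whether $e$ is one of the matched pairs $v_1v_2,u_1u_2,w_1w_2$, and in the remaining subcase on the colour of the edge $xy$ \emph{inside} the outside triple --- an edge your sketch never considers. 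None of these steps follows from your facts (a) and (b) alone; they are where the construction of $\Gamma$ actually enters. As written, the proposal is a correct plan with the proof itself missing.
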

\begin{proof}
 Assume $c$ is a $2$-coloring of the edges of $\Gamma$, labeled like in Figure~\ref{fig::graphGamma}, without monochromatic $G_1$.
 Let $K$ denote the copy of $K_6$ in $\Gamma$.
 Due to Lemma~\ref{lem::K6G1}, $c$ restricted to $K$ is isomorphic to one of three colorings of $K_6$ given in Figure~\ref{fig::K6G1}.
 We will distinguish cases based on the coloring of $K$ under $c$.
 
 (\textbf{Case 1}:) The red subgraph of $K$ under $c$ consists of two disjoint $K_3$ and the blue edges in $K$ form a copy of $K_{3,3}$.
 If one of these blue edges is contained in a blue $K_3$ with a vertex from $\{x,y,z\}$, then there is blue $G_1$.
 Due to the construction of $\Gamma$ we can find three vertex disjoint copies of $K_3$ each with exactly one vertex from each of the red $K_3$ in $K$ and exactly one vertex from $\{x,y,z\}$.
 Since there is a red edge from $K$ to $\{x,y,z\}$ in each of these, one of the red $K_3$ in $K$ has two independent pending red edges.
 This gives a red $G_1$, a contradiction.
 
 (\textbf{Case 2}:) The red subgraph of $K$ under $c$ consists of two disjoint $K_3$ connected by a single edge $e$.
 Then all edges from $K$ to $\{x,y,z\}$ are blue if not adjacent to $e$.
 Then there are two vertices in $K$, not incident to $e$, each having two blue edges to $\{x,y,z\}$ but only one common neighbor in $\{x,y,z\}$.
 Since they are connected by an blue edge in $K$, this gives a blue $G_1$, a contradiction.
 
 (\textbf{Case 3}:) The red subgraph of $K_6$ consists of $K_4$ and a single edge $e$.
 Then all edges from this $K_4$ to $\{x,y,z\}$ are blue.
 The blue edges in $K$ form $K_{2,4}$.
 Again, if one of these blue edges in $K$ forms a blue triangle with a vertex from $\{x,y,z\}$, then there is a blue $G_1$.
 Thus all edges from $e$ to $\{x,y,z\}$ are red.
 If $e\not\in\{v_1v_2,u_1u_2,w_1w_2\}$, then $e$ together with $\{x,y,z\}$ forms a red copy of $G_1$.
 So assume $e=w_1w_2$.
 If the edge $xy$ is blue, then $\{x,y,u_1,u_2,v_1\}$ gives a blue $G_1$.
 If it is red, then $\{x,y,z,w_1,w_2\}$ gives a red $G_1$, a contradiction.
 
 Altogether we proved that there is no $2$-edge-coloring of $\Gamma$ without monochromatic $G_1$.
\end{proof}

\begin{figure}[b]
 \centering
 \includegraphics{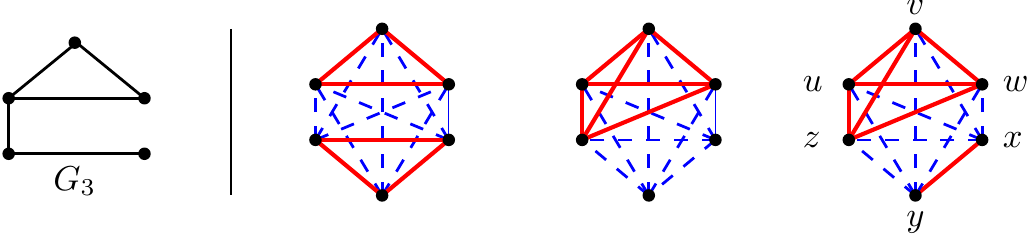}
 \caption{All possible $2$-colorings of $K_6$ without monochromatic $G_3$.}
 \label{fig::K6G3}
\end{figure}

\begin{lemma}\label{lem::K6G3}
 Each $2$-edge-coloring of $K_6$ without monochromatic $G_3$ equals to one of the colorings given in Figure~\ref{fig::K6G3}, up to isomorphism and renaming colors.
\end{lemma}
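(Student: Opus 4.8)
The plan is to follow the strategy of Lemma~\ref{lem::K6G1} almost verbatim, with $G_3$ in place of the bull $G_1$. Since $R(K_3)=6$, every $2$-edge-coloring of $K_6$ contains a monochromatic triangle, which up to renaming colors I may take to be red, on a vertex set $K=\{u,v,w\}$; write $K^{c}=\{x,y,z\}$ for the remaining three vertices. The aim is to show that, once this red triangle is fixed, forbidding monochromatic copies of $G_3$ pins the edges incident to $K^{c}$ down to the finitely many patterns of Figure~\ref{fig::K6G3}.

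First I would read off from the edge set of $G_3$ a short list of forbidden local patterns. Because $G_3$ is a five-vertex graph containing a triangle, a red triangle (either $K$ itself, or one formed by two vertices of $K^{c}$ and one vertex of $K$) that acquires the appropriate additional red edges immediately yields a red $G_3$; dually the same holds in blue. These obstructions limit how many red edges a single vertex of $K$ may send into $K^{c}$ and force the bulk of the $K$--$K^{c}$ edges to be blue, so that between $K$ and $K^{c}$ the blue graph is essentially complete bipartite. Such a large blue complete bipartite subgraph is admissible on its own (being triangle-free, it carries no monochromatic $G_3$), but it sharply restricts the colors of the remaining edges, in particular those inside $K^{c}$, under pain of completing a blue triangle-plus-pendant configuration that embeds $G_3$.

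Then I would branch on the residual freedom: the number and incidence pattern of red edges between $K$ and $K^{c}$, and the colors of the three edges inside $K^{c}$. In each branch the forbidden patterns determine the undetermined edges, and only a few branches survive; collecting these, after identifying branches related by permuting $K$, permuting $K^{c}$, or swapping the two colors, should reproduce exactly the list in Figure~\ref{fig::K6G3}. A final direct check that each listed coloring is indeed free of a monochromatic $G_3$ completes the classification. The main obstacle is the bookkeeping of this case analysis: guaranteeing that the split is exhaustive and that symmetric branches are neither overlooked nor double-counted, so that the surviving colorings coincide precisely with the figure; as in Lemma~\ref{lem::K6G1}, the crux is the initial step of showing that fixing the red triangle already forces the blue $K$--$K^{c}$ graph to be nearly complete bipartite, after which only a handful of completions remain.
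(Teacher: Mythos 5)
Your high-level opening (extract a monochromatic triangle from $R(K_3)=6$, fix it as a red $K=\{u,v,w\}$, and let the forbidden patterns propagate) matches the paper, but the one substantive structural claim you commit to is false for $G_3$, and it is precisely the claim your whole case analysis is supposed to rest on. You assert that the obstructions ``force the bulk of the $K$--$K^c$ edges to be blue, so that between $K$ and $K^c$ the blue graph is essentially complete bipartite.'' That is the right statement for $G_1$ (the bull), where two \emph{independent} red pendant edges on a red triangle already form a red $G_1$; it is not the right statement for $G_3$, which is a triangle with a pendant \emph{path of length two}. A red triangle tolerates a red pendant edge at each of its three vertices provided they all end at the same vertex $z$, and indeed two of the three extremal colorings in Figure~\ref{fig::K6G3} consist of a red $K_4$ (plus possibly a disjoint red edge), so that three of the nine $K$--$K^c$ edges are red. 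Hence the blue $K$--$K^c$ graph is not forced to be (nearly) complete bipartite, and the mechanism you describe for pinning down the edges inside $K^c$ (``under pain of completing a blue triangle-plus-pendant configuration'') does not get off the ground: in the coloring whose red graph is exactly a $K_4$, the set $K^c$ even carries a blue triangle, which is harmless because no blue path of length two can leave it.

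The paper's proof is organized around exactly this phenomenon: it first splits on whether \emph{any} $K$--$K^c$ edge is red, then on whether such a red edge $uz$ extends to a red $K_4$ on $\{u,v,w,z\}$ (these branches yield the three colorings of Figure~\ref{fig::K6G3}), and only in the residual case (a red $uz$ with no red $K_4$) runs a propagation argument of the kind you envisage, repeatedly invoking the specific shape of $G_3$ --- a monochromatic triangle plus a monochromatic path of length two leaving it --- until a contradiction or a relabelled earlier case appears. Beyond this, your write-up does not actually carry out any branching: the ``short list of forbidden local patterns,'' the exhaustive case split, and the verification that only the listed colorings survive are all deferred. As it stands the proposal is a plan rather than a proof, and its key intermediate step must be replaced (by the red-$K_4$ dichotomy or something equivalent) before the plan can succeed.
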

\begin{proof}
 Consider a $2$-edge-coloring of $K_6$ on vertices $u,v,w,x,y,z$ without monochromatic $G_3$.
 We may assume that $K=\{u,v,w\}$ forms a red $K_3$.
 
 If all edges from $K$ to $K^c$ are blue, then no edge among $K^c$ is blue.
 Thus the coloring corresponds to the left one in Figure~\ref{fig::K6G3}.
 So assume the edge $uz$ is red.
 If $\{u,v,w,z\}$ forms a red $K_4$, then all edges from this $K_4$ to $\{x,y\}$ are blue.
 No matter which color is assigned to $xy$, the coloring has no monochromatic $G_3$ and corresponds to the middle or right color in Figure~\ref{fig::K6G3}.
 
 So assume further that $\{u,v,w,z\}$ is not a red $K_4$ (but $uz$ is still red), without loss of generality $wz$ is blue.
 Since $uz$ is red, $xz$ and $yz$ are blue.
 
 If (\textbf{Case 1}) $wx$ is blue, then $\{w,x,z\}$ is a blue $K_3$ with pending blue edge $yz$.
 Thus $uy$, $vy$ are red.
 But then $wy$ needs to be blue (otherwise $\{v,w,y\}$ is red $K_3$ with pending red path $vuz$) and there is a blue $K_4$.
 Thus the coloring corresponds to the middle or right coloring of Figure~\ref{fig::K6G3} with switched colors, as argued above.
 
 If (\textbf{Case 2}) $wx$ is red, then $xy$ is blue and $vx$, $vz$ are blue.
 Then $vy$ needs to be red, since $\{v,x,y\}$ is a blue $K_3$ with pending blue path $xzw$ otherwise.
 Then $wy$ is blue, since otherwise $\{v,w,y\}$ is a red $K_3$ with pending red path $vuz$ otherwise.
 But now $\{w,y,z\}$ is a blue $K_3$ with pending blue path $zxv$, a contradiction.
\end{proof}

\begin{lemma}\label{lem::gammaG3}
 $\Gamma\to G_3$.
\end{lemma}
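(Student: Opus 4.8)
The plan is to mirror the proof of Lemma~\ref{lem::gammaG1}. Recall that $\Gamma$ consists of a copy $K$ of $K_6$ on $\{u_1,u_2,v_1,v_2,w_1,w_2\}$ together with three extra vertices $x,y,z$, where $x$ is joined to the pair $\{u_1,u_2\}$, $y$ to $\{v_1,v_2\}$ and $z$ to $\{w_1,w_2\}$, so that $\{x,u_1,u_2\}$, $\{y,v_1,v_2\}$ and $\{z,w_1,w_2\}$ are triangles. Recall also that $G_3$ is a triangle with a pendant path of length two attached at one of its vertices; hence a monochromatic copy of $G_3$ is exactly a monochromatic triangle together with a pendant path of two edges of the same color. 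So the whole strategy is: locate a monochromatic triangle in $\Gamma$ and then attach a same-colored pendant path of length two, using either the extra vertices $x,y,z$ or the $K$-vertices not lying in the chosen triangle.

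Suppose, for contradiction, that $c$ is a $2$-edge-coloring of $\Gamma$ with no monochromatic $G_3$. Restricting $c$ to $K$ and applying Lemma~\ref{lem::K6G3}, the coloring of $K$ is, up to isomorphism and renaming colors, one of the three colorings of Figure~\ref{fig::K6G3}: (\textbf{Case 1}) two vertex-disjoint red $K_3$ joined by a blue $K_{3,3}$; (\textbf{Case 2}) a red $K_4$ with the remaining pair of $K$-vertices joined to the $K_4$ by a blue $K_{2,4}$ and joined to each other by a blue edge; and (\textbf{Case 3}) the same as Case 2 but with a red edge inside the remaining pair. In each case I would first pick a monochromatic triangle inside $K$ and then argue that forbidding a pendant path of its color forces enough edges of the other color, both inside $K$ and among the pendant edges $xu_i,yv_i,zw_i$, that a monochromatic $G_3$ of the other color appears. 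For instance, in Case~1 the blue graph restricted to $K$ is triangle-free, so any blue $G_3$ must route its triangle through one of $x,y,z$; I would rule such blue triangles out by constraining the pendant edges, and then extend one of the two red $K_3$ (or a triangle $\{x,u_1,u_2\}$, when its two pendant edges are red) by a red pendant path obtained from a free $K$-vertex or from an extra vertex, forcing a red $G_3$.

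The main obstacle is that the colorings supplied by Lemma~\ref{lem::K6G3} are determined only up to isomorphism of $K_6$, whereas $\Gamma$ breaks this symmetry through its three distinguished pairs $\{u_1,u_2\},\{v_1,v_2\},\{w_1,w_2\}$ and the six pendant edges at $x,y,z$. Consequently each of the three cases splits into subcases according to how the distinguished pairs sit inside the canonical coloring: in Cases~2 and~3 whether the blue-attached remaining pair is one of the distinguished pairs or a ``mixed'' pair, and in Case~1 how many distinguished pairs lie inside a single red $K_3$ versus straddle the bipartition; within each subcase one must track the colors of the pendant edges. This bookkeeping, entirely analogous to the three cases in the proof of Lemma~\ref{lem::gammaG1}, is the bulk of the work. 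Once the placement of the distinguished pairs is fixed, upgrading a monochromatic triangle to a monochromatic $G_3$ by a same-colored pendant path through $x$, $y$ or $z$ (or through a free $K$-vertex) is routine, and each subcase yields the desired contradiction, proving $\Gamma\to G_3$.
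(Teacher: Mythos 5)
Your high-level strategy (reduce to the three colorings of $K_6$ via Lemma~\ref{lem::K6G3}, then extend a monochromatic triangle by a same-colored pendant $P_3$ through the extra vertices) is indeed the approach the paper takes, and your identification of $G_3$ as a triangle with a pendant path of length two is correct. However, your description of $\Gamma$ is wrong, and fatally so. The paper's $\Gamma$ (Figure~\ref{fig::graphGamma}) has each of $x,y,z$ joined to \emph{four} vertices of the $K_6$ and has $\{x,y,z\}$ inducing a triangle; the paper's proof uses both facts explicitly (e.g.\ ``there are $4$ edges from each vertex in $\{x,y,z\}$ to $K$'' and ``$\{x,y,z\}$ induces a blue $K_3$''). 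For the graph you describe --- $x,y,z$ pairwise nonadjacent, each attached to only one distinguished pair --- the lemma is simply false: color $\{u_1,v_1,w_1\}$ and $\{u_2,v_2,w_2\}$ as red triangles, the $K_{3,3}$ between them blue, and the pendant edges $xu_1,yv_1,zw_1$ red and $xu_2,yv_2,zw_2$ blue. The red graph is two triangles, one with three pendant \emph{edges}, so it has no vertex at distance two from a red triangle and contains no $G_3$; the blue graph is bipartite plus degree-one vertices, hence triangle-free. So no amount of case analysis can rescue the plan as stated.

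Beyond that, even if $\Gamma$ were correctly described, what you have written is a plan rather than a proof: every case is introduced with ``I would'' and you explicitly defer ``the bulk of the work'' (the subcase analysis tracking how the distinguished pairs sit inside the canonical colorings and the colors of the edges at $x,y,z$). Since none of these subcases is actually checked, and since the check is exactly where the content of the lemma lies, the argument as submitted establishes nothing. To repair it you must first take the correct $\Gamma$ from Figure~\ref{fig::graphGamma} and then carry out the three cases in full, as the paper does.
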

\begin{proof}
 Assume $c$ is a $2$-coloring of the edges of $\Gamma$, labeled like in Figure~\ref{fig::graphGamma}, without monochromatic $G_3$.
 Let $K$ denote the copy of $K_6$ in $\Gamma$.
 Due to Lemma~\ref{lem::K6G3}, $c$ restricted to $K$ is isomorphic to one of three colorings of $K_6$ given in Figure~\ref{fig::K6G3}.
 We will distinguish cases based on the coloring of $K$ under $c$.
 
 (\textbf{Case 1}:) The red subgraph of $K_6$ consists of two disjoint $K_3$.
 Then the blue edges in $K$ form a copy of $K_{3,3}$.
 If one of these blue edges is contained in a blue $K_3$ with a vertex from $\{x,y,z\}$, then there is blue $G_3$.
 On the other hand no vertex in $K^c=\{x,y,z\}$ sends a red edge to each of the red $K_3$s in $K$.
 Since there are $4$ edges from each vertex in $\{x,y,z\}$ to $K$, each is incident to at least one red edge and one blue edge.
 If one of the edges induced by $\{x,y,z\}$ is red, then there is a red $G_3$ with a red $K_3$ from $K$ and an edge between them.
 So $\{x,y,z\}$ induces a blue $K_3$ which forms a blue $G_3$ with an edge to $K$ and another contained in $K$.
 
 (\textbf{Case 2}:) The red subgraph of $K_6$ consists of a red $K_4$ only.
 Then no edge incident to this $K_4$ is red.
 Let $a$, $b$ denote the vertices in $K$ not contained in the red $K_4$.
 Then every blue edge between $\{x,y,z\}$ and the red $K_4$ is part of a blue $G_3$ together with $a$, $b$, and another vertex in $K$.
 
 (\textbf{Case 3}:) The red subgraph of $K_6$ consists of a red $K_4$ and a disjoint red edge $e$.
 Again all edges incident to the red $K_4$ are blue and no blue edge in $K$ is contained in a blue $K_3$ with a vertex from $K^c$.
 Thus all edges from $e$ to $K^c$ are red.
 Assume $e\not\in\{v_1v_2,u_1u_2,w_1w_2\}$, say it is $v_2w_2$.
 If $xy$ is blue then $\{x,y,u_1,v_1,z\}$ forms a blue $G_3$.
 If $xy$ is red then $\{x,y,v_2,w_2,z\}$ gives a red $G_3$.
 So assume $e=w_1w_2$.
 If the edge $xy$ is blue, then $\{x,y,u_1,v_1,z\}$ forms a blue $G_3$.
 If it is red, then $\{x,y,z,w_1,w_2\}$ gives a red $G_3$.
\end{proof}

\begin{lemma}\label{lem::K8G123}
 For each $i\in\{1,2,3\}$ a $2$-coloring of $K_8$ does not have a monochromatic $G_i$, if and only if one of the color classes induces two vertex disjoint copies of $K_4$.
\end{lemma}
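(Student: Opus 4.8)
The plan is to prove both directions, with the structural input that each $G_i$ is a connected, $K_4$-free, triangle-containing graph on five vertices that embeds into $K_4^{+}$, the graph obtained from $K_4$ by attaching one pendant edge; all of this can be read off from Figure~\ref{fig::5Vert}. Consistently with the statement, if some $G_i$ either contained a $K_4$ or failed to embed into $K_4^{+}$, one could exhibit a $G_i$-free coloring of $K_8$ outside the claimed family: in the first case $G_i=K_4^{+}$ and a red $K_4$ plus two disjoint red edges on the remaining four vertices is $G_i$-free; in the second case a red $2K_4$ plus one cross edge is $G_i$-free (its only color with a triangle, red, lives inside $K_4^{+}$). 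So these three facts are exactly what makes $2K_4$ the unique obstruction.

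For the forward (``if'') direction, suppose the red class is $2K_4$; then the blue class is exactly $K_{4,4}$, since $K_8$ has $28$ edges, $12$ of them red. No $G_i$ is red, because each $G_i$ is connected on five vertices while every red component has four; and no $G_i$ is blue, because each $G_i$ has a triangle while $K_{4,4}$ is bipartite.

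For the reverse direction I fix a $2$-coloring $c$ of $K_8$ with no monochromatic $G_i$ and run a two-phase argument whose engine is a bootstrap from a monochromatic $K_4$. Suppose a red $K_4$ on a quadruple $A$, and let $B$ be the other four vertices. If any edge between $A$ and $B$ were red it would complete a red $K_4^{+}$ and hence, since $G_i\subseteq K_4^{+}$, a red $G_i$; so all $16$ edges between $A$ and $B$ are blue, spanning a blue $K_{4,4}$. If now some edge inside $B$ were blue, say $ef$, then for $a_1,a_2\in A$ the set $\{a_1,a_2,e,f\}$ would induce a blue diamond ($K_4$ minus the single red edge $a_1a_2$), to which a blue pendant can be attached in any position (for instance $a_1g$ with $g\in B$, or $ea_3$ with $a_3\in A$); as every $G_i$ is a triangle-containing, $K_4$-free subgraph of $K_4^{+}$, it embeds into such a blue ``diamond plus pendant,'' giving a blue $G_i$. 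Hence $B$ spans no blue edge and is a red $K_4$, so the red class is exactly $2K_4$.

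It remains to produce the initial monochromatic $K_4$, and this is the step I expect to be the main obstacle, since $R(K_4)=18>8$ makes it far from automatic and it must be squeezed out of $G_i$-freeness. Here I would restrict $c$ to all $\binom{8}{6}=28$ six-vertex subsets and invoke the classifications of $G_i$-free colorings of $K_6$ (Lemma~\ref{lem::K6G1} for $G_1$, Lemma~\ref{lem::K6G3} for $G_3$, together with the analogous statement for $G_2$, which must be proved in the same style and is the one genuinely missing ingredient). In each classification every listed type either already contains a monochromatic $K_4$ or is the ``two disjoint monochromatic triangles versus $K_{3,3}$'' type, which carries exactly two monochromatic triangles and no more. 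If no six-subset contained a monochromatic $K_4$, then every six-subset would carry exactly two monochromatic triangles; but a fixed monochromatic triangle lies in exactly $\binom{5}{3}=10$ of the six-subsets, so a double count gives $10t=2\cdot 28=56$ for the number $t$ of monochromatic triangles in $K_8$, which has no integer solution. This contradiction yields a monochromatic $K_4$ and lets the bootstrap conclude. The two points needing care are therefore the missing $K_6$-classification for $G_2$ and the routine but finite verification, inside the bootstrap, that each specific $G_i$ sits inside a blue diamond-plus-pendant.
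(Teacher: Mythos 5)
Your forward direction and your bootstrap from a monochromatic $K_4$ are both correct, and the bootstrap essentially coincides with the endgame of the paper's argument: once a red $K_4$ is located, all cross edges are forced blue and the complementary quadruple is forced to be a red $K_4$ as well. The genuine divergence, and the genuine gap, lies in how you produce the initial monochromatic $K_4$. The paper never needs the $K_6$ classifications here: since $R(H_{3,1})=7\leq 8$ there is always a monochromatic triangle with a pendant edge, and for each $G_i$ a short ad hoc argument upgrades this $H_{3,1}$ to the desired structure (for $G_1$ the two degree-$2$ vertices of the red $H_{3,1}$ can have no further red edges, giving a blue $K_{2,4}$ whose $4$-side must be a red $K_4$; similar short arguments work for $G_2$ via the degree-$3$ vertex and for $G_3$ via the pendant vertex). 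Your route instead averages over all $\binom{8}{6}=28$ six-vertex subsets; the divisibility count $10t=56$ is a nice idea and does work for $i=1,3$, but only because every $K_4$-free type in Lemmas~\ref{lem::K6G1} and~\ref{lem::K6G3} happens to carry exactly two monochromatic triangles. Your stated dichotomy is not literally true for $G_1$: the middle coloring of Figure~\ref{fig::K6G1} (two red triangles joined by a single red edge against a blue $K_{3,3}-e$) contains no monochromatic $K_4$ and is not the pure $2K_3$-versus-$K_{3,3}$ type; it is rescued only because it, too, has exactly two monochromatic triangles.

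The real gap is the case $i=2$: the paper contains no classification of $G_2$-free colorings of $K_6$, and your counting argument cannot run without knowing that every such coloring without a monochromatic $K_4$ has exactly two monochromatic triangles. You flag this yourself, but as written the lemma is unproved for $G_2$. The gap is closable without a full classification: if in a $G_2$-free coloring of $K_6$ some vertex had four neighbors in one color, say red, then its red neighborhood would have to be red-independent (a red edge inside it would create a red triangle through that vertex together with two further red pendant edges at it, i.e.\ a red $G_2$), hence a blue $K_4$; so in the absence of a monochromatic $K_4$ every monochromatic degree lies in $\{2,3\}$, and Goodman's formula then gives exactly two monochromatic triangles. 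With that supplement your proof is complete, though it remains considerably heavier than the paper's direct appeal to $R(H_{3,1})=7$.
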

\begin{proof}
 First of all note that an $2$-edge-coloring of $K_8$ with one color class inducing two vertex disjoint $K_4$'s does not contain a monochromatic copy of $G_i$ for each $i\in\{1,2,3\}$.
 \medskip
 
 On the other hand, consider an arbitrary $2$-edge-coloring of $K_8$ without a monochromatic copy of $G_i$ for a fixed $i\in\{1,2,3\}$.
 There is a monochromatic copy $K$ of $H_{3,1}$, say in red (i.e. a red copy of $K_3$ with a pending edge), since $R(H_{3,1}) = 7$.
 \medskip
 
 Suppose there is no monochromatic $G_1$.
 Then none of the two vertices of degree $2$ in $K$ is incident to another red edge in $K_8$.
 Thus there is a blue copy of $K_{2,4}$.
 Then the part with four vertices in this $K_{2,4}$ contains no further blue edge and induces a red $K_4$.
 But then no edge incident to this $K_4$ is red, and there is a blue copy of $K_{4,4}$.
 Since no other edge might be blue then, there are two disjoint red $K_4$.
 \medskip
 
 Suppose there is no monochromatic $G_2$.
 The vertex of degree $3$ in $K$ has no other incident red edge.
 So it is the center of a blue $K_{1,4}$.
 The degree $1$ vertices in this copy of $K_{1,4}$ do not induce a blue edge, so they induce a red $K_4$.
 But then no edge incident to this $K_4$ is red and there is a blue $K_{4,4}$ between $K$ and the other vertices.
 As argued above the red edges form two disjoint $K_4$s and the blue edges form $K_{4,4}$.
 \medskip
 
 Suppose there is no monochromatic $G_3$.
 Let $K^c$ denote the set of vertices not in $K$.
 Then any edge connecting the vertex $v$ of degree $1$ in $K$ to a vertex in $K^c$ is blue.
 Assume there is a red edge from $K$ to a vertex $u\in K^c$.
 Then any edge connecting $u$ to a vertex in $K^c\setminus\{u\}$ is blue.
 Then each edge $e$ within $K^c\setminus\{u\}$ or from $K^c\setminus\{u\}$ to $K\setminus\{v\}$ is red, since otherwise there is a blue $G_3$ spanned by $u$, $v$ and $e$.
 But then there is red $G_3$, a contradiction.
 
 So all edges between $K$ and $K^c$ are blue.
 Then there is no other blue edge and the red edges form two disjoint copies of $K_4$.
\end{proof}

\begin{lemma}\label{lem::H85G123}
 $H_{8,5}\to G_i$ for each $i\in\{1,2,3\}$.
\end{lemma}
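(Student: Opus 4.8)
The plan is to reduce to Lemma~\ref{lem::K8G123} and then dispose of the single extra vertex by a short case analysis. Write $H_{8,5}$ as a clique $K$ on the vertex set $V=\{1,\dots,8\}$ together with a vertex $x$ joined to a $5$-element set $N\subseteq V$. Fix $i\in\{1,2,3\}$ and an arbitrary $2$-edge-coloring of $H_{8,5}$. If the coloring restricted to $K$ already contains a monochromatic $G_i$ we are done, so assume it does not. By Lemma~\ref{lem::K8G123} one color class on $K$ induces two vertex disjoint copies of $K_4$; up to swapping the two colors we may assume these are red, on vertex sets $A$ and $B$ with $|A|=|B|=4$, while all edges between $A$ and $B$ are blue. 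Recall (as is visible in the proof of Lemma~\ref{lem::K8G123}) that each $G_i$ is a triangle with two further edges attached at the triangle: $G_1$ has a pendant edge at each of two distinct triangle vertices, $G_2$ has two pendant edges at a single triangle vertex, and $G_3$ has a pendant path on two edges. In each case a monochromatic copy is obtained from a monochromatic triangle by attaching the two extra edges along monochromatic edges to previously unused vertices.

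The key point is that $N$ meets both $A$ and $B$: since $|A|=|B|=4<5=|N|$, the vertex $x$ has at least one neighbor in $A$ and at least one in $B$. First suppose some edge from $x$ to $A\cup B$ is red, say $xa_1$ with $a_1\in A$ and $A=\{a_1,a_2,a_3,a_4\}$. Then the red $K_4$ on $A$ together with $xa_1$ already contains a red $G_i$ for every $i$: for $G_1$ take the triangle $a_1a_2a_3$ with the red horns $xa_1$ and $a_4a_2$; for $G_2$ take the triangle $a_1a_2a_3$ with the red pendants $a_1x$ and $a_1a_4$; and for $G_3$ take the triangle $a_2a_3a_4$ with the red path $a_2a_1x$. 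By symmetry the same works if the red $x$-edge goes to $B$.

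Otherwise all five edges incident to $x$ are blue. Pick a blue neighbor $a\in A$ and a blue neighbor $b\in B$ of $x$; since $ab$ is blue, $xab$ is a blue triangle. As $a$ is blue-adjacent to all of $B$ and $b$ to all of $A$, this triangle extends to a blue $G_i$ inside the blue graph: for $G_1$ add the blue horns $ab'$ and $ba'$ with $b'\in B\setminus\{b\}$ and $a'\in A\setminus\{a\}$; for $G_2$ add two blue pendants $ab'$, $ab''$ with $b',b''\in B\setminus\{b\}$; and for $G_3$ add the blue path $b\,a'\,b'$ with $a'\in A\setminus\{a\}$ and $b'\in B\setminus\{b\}$. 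In every case the five chosen vertices are distinct because $|A|,|B|\geq 4$. This exhausts the cases and shows $H_{8,5}\to G_i$ for each $i\in\{1,2,3\}$.

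There is essentially no hard step here: once Lemma~\ref{lem::K8G123} pins down the coloring of $K$, everything follows from pigeonhole and explicit embeddings. The one place where the hypotheses are used essentially is the observation that $N$ reaches both $A$ and $B$ — this is what produces a blue triangle in the all-blue case, and it relies precisely on $x$ having degree $5>4=|A|=|B|$; with a degree-$4$ apex this step could fail. The remaining bookkeeping is only to check that $A$ and $B$ are large enough to supply the two extra vertices required for each $G_i$, which they are since each has four vertices while each $G_i$ has only five.
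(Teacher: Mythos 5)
Your proof is correct and follows essentially the same route as the paper's: reduce to Lemma~\ref{lem::K8G123}, observe that the degree-$5$ vertex can send no red edge to either red $K_4$ (since a red $K_4$ with a pendant red edge contains every $G_i$), and then use the fact that its five neighbors must meet both $K_4$'s to produce a blue triangle that extends to a blue $G_i$. Your write-up merely makes explicit the embeddings of $G_1$, $G_2$, $G_3$ that the paper leaves to the reader.
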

\begin{proof}
Assume there is a $2$-edge-coloring of $H_{8,5}$ without monochromatic $G_i$ for some $i\in\{1,2,3\}$.
We may assume that within the copy of $K_8$ the red edges form two disjoint $K_4$ with a blue $K_{4,4}$ in-between by Lemma~\ref{lem::K8G123}.
Let $v$ denote the vertex of degree $5$.
Then $v$ has only blue incident edges since every neighbor of $v$ is part of a red $K_4$.
But $v$ has a neighbor in each of the red $K_4$'s.
Thus $v$ together with these two vertices forms a blue $K_3$ which is contained in a blue copy of $G_i$ for all $i\in\{1,2,3\}$, a contradiction.
\end{proof}

\begin{lemma}\label{lem::K7G12}
 A $2$-edge-coloring of $K_7$ does not have monochromatic $G_1$, if and only if one of the color classes induces vertex disjoint copies of $K_3$ and $K_4$.
\end{lemma}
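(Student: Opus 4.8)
The plan is to prove both implications, with the reverse (``if'') direction being routine and the forward direction requiring a short structural analysis. For the reverse direction, suppose one color class, say red, induces $K_3 \sqcup K_4$ (vertex-disjoint). Recall that $G_1$ is the \emph{bull}: a triangle with a pendant edge attached at each of two of its vertices. The red graph then contains no bull, since the $K_3$-component is too small to carry two external pendant vertices and every triangle of the $K_4$-component has only a single vertex outside it; meanwhile the blue graph is exactly $K_{3,4}$, which is bipartite and hence triangle-free, so it also contains no bull. Thus no monochromatic $G_1$ appears.

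For the forward direction, I would assume a $2$-edge-coloring of $K_7$ with no monochromatic $G_1$. Since $R(K_3)=6$, there is a monochromatic triangle; relabel colors so that $T=\{a,b,c\}$ is a red triangle, and let $U$ be the remaining four vertices. The heart of the argument is a dichotomy for the red edges from $T$ to $U$: writing $N_a,N_b,N_c\subseteq U$ for the red neighborhoods in $U$ of $a,b,c$, a red bull with base triangle $T$ exists precisely when one can choose two \emph{distinct} vertices of $T$ sending red edges to two \emph{distinct} vertices of $U$. Ruling this out forces either (i) at most one of $N_a,N_b,N_c$ is nonempty, or (ii) all red edges from $T$ to $U$ land on a single vertex $v\in U$, i.e. $N_a,N_b,N_c\subseteq\{v\}$.

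In case (i) I would take $b,c$ to be entirely blue to $U$, so $\{b,c\}$ and $U$ span a blue $K_{2,4}$; any blue edge inside $U$ would then complete a blue bull (a blue triangle through $b$ with one pendant at $b$ and one at a $U$-vertex via $c$), so $U$ must be a red $K_4$. Now a single red edge from $a$ into $U$ would create a red bull off a triangle of this $K_4$ (the fourth $K_4$-vertex supplying the second pendant), so $a$ too is all-blue to $U$; hence red $=K_3\sqcup K_4$ on $(T,U)$. In case (ii) I would set $U\setminus\{v\}=\{e,f,g\}$: these three are all-blue to $T$, and a blue edge among them would complete a blue bull against the blue $K_{3,3}$, so $\{e,f,g\}$ is a red triangle. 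Then $v$ cannot be blue to both $T$ and $\{e,f,g\}$ (else a blue triangle through $v$ yields a blue bull), so $v$ is all-red to one side, say $\{e,f,g\}$, producing a red $K_4$ on $\{v,e,f,g\}$; a red edge from $v$ to $T$ would then give a red bull off this $K_4$, so all edges between $T$ and $\{v,e,f,g\}$ are blue and again red $=K_3\sqcup K_4$.

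The main obstacle is the bookkeeping in the case analysis: each forced monochromatic $K_4$ or complete bipartite piece must be checked to actually generate a bull of the appropriate color, which repeatedly exploits that the relevant part has enough vertices — here the arithmetic $7=3+4$ is exactly what makes $U$ a $K_4$ in case (i) and $\{v,e,f,g\}$ a $K_4$ in case (ii). I would isolate the ``base-triangle dichotomy'' as a one-line combinatorial claim at the outset so that cases (i) and (ii) are manifestly exhaustive, after which each subsequent step is a direct ``otherwise a monochromatic bull appears'' deduction.
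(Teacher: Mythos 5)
Your proof is correct and complete. Both directions check out: for the ``if'' direction the red class $K_3\sqcup K_4$ has no connected component on $5$ vertices and the blue class $K_{3,4}$ is triangle-free, so neither contains the bull $G_1$; for the ``only if'' direction your base-triangle dichotomy is exactly a K\H{o}nig-type statement (no matching of size $2$ in the red bipartite graph between $T$ and $U$ forces a vertex cover of size $1$, lying either in $T$ or in $U$), and each subsequent forcing step (blue $K_{2,4}$ forces $U$ to be a red $K_4$; a red $K_4$ forbids incident red edges; etc.) is verified correctly, including the sub-case of (ii) where $v$ joins $T$ rather than $\{e,f,g\}$, which still yields a red $K_3\sqcup K_4$ under a different partition.

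Your route differs from the paper's. The paper gives no standalone argument for this lemma; it only remarks that the proof is ``very similar'' to that of the $K_8$ analogue (Lemma~\ref{lem::K8G123}), whose $G_1$ case starts from a monochromatic copy of $H_{3,1}$ (a triangle with one pendent edge, using $R(H_{3,1})=7$) and then cascades: the two degree-$2$ vertices of that $H_{3,1}$ can send no further red edges, producing a blue $K_{2,4}$, which forces a red $K_4$, which forces a blue complete bipartite graph, and so on. You instead start from a bare monochromatic triangle (using only $R(K_3)=6$) and organize the case split around the structure of the red bipartite graph between the triangle and the remaining four vertices. The paper's template gets a slightly stronger foothold for free (the pendent edge of $H_{3,1}$ already pins down one attachment point of a potential bull), at the cost of invoking the less elementary fact $R(H_{3,1})=7$ and of a case analysis that does not transfer verbatim to $K_7$; your version is self-contained, makes the exhaustiveness of the cases transparent via the cover-of-size-one observation, and is arguably the cleaner argument to actually write down for $K_7$.
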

\begin{proof}
 The proof is very similar to the proof of Lemma~\ref{lem::K8G123}.
\end{proof}

\begin{lemma}\label{lem::GammaPrimeG1}
 $\Gamma'\to G_1$.
\end{lemma}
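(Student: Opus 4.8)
The plan is to argue by contradiction. Suppose some $2$-edge-coloring $c$ of $\Gamma'$ has no monochromatic $G_1$. Write $K$ for the copy of $K_7$ inside $\Gamma'$ and let $a,b$ be the two added vertices of degree $5$ with exactly four common neighbors. The first step is to pin down the coloring on $K$: by Lemma~\ref{lem::K7G12}, up to swapping colors the red edges of $K$ induce two vertex-disjoint cliques, a triangle on a set $T$ of three vertices and a $K_4$ on a set $Q$ of four vertices, so that every edge between $T$ and $Q$ is blue. All of the remaining work concerns the edges joining $a$ and $b$ to $K$.

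Next I would show that every edge from $a$ or $b$ into $Q$ is blue. Indeed, a red edge $xq$ with $x\in\{a,b\}$ and $q\in Q$ creates a red $G_1$: take a red triangle inside $Q$ through $q$, use a fourth vertex of $Q$ as one pendant and $x$ as the other. Symmetrically, every edge from $a$ or $b$ into $T$ must be red: a blue edge $xt$ with $t\in T$, together with a blue edge from $x$ to some $q\in Q$ (which exists, since $x$ meets at least $5-3=2$ vertices of $Q$), gives a blue triangle $xtq$, and the complete bipartite blue graph between $T$ and $Q$ supplies two further blue pendant edges, one at $t$ (to a vertex $q'\in Q\setminus\{q\}$) and one at $q$ (to a vertex $t'\in T\setminus\{t\}$), producing a blue $G_1$ on the distinct vertices $x,t,q,q',t'$.

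At this point the red graph splits into the isolated $K_4$ on $Q$ together with a component consisting of the triangle on $T$ and the red edges from $a,b$ into $T$; moreover one checks there are no blue triangles at all, since $Q$ spans no blue edge and $a,b$ are nonadjacent. Hence a monochromatic $G_1$ can only be a red copy living on $T\cup\{a,b\}$, and such a copy exists precisely when $a$ is red-adjacent to some $t_i\in T$ and $b$ to some $t_j\in T$ with $i\neq j$. Since each of $a,b$ meets at least $5-4=1$ vertex of $T$, the only way to avoid this is $|N(a)\cap T|=|N(b)\cap T|=1$ with the single neighbor being the same vertex $t$. But then $a$ and $b$ are each adjacent to $t$ and to all four vertices of $Q$, so they share five common neighbors, contradicting the hypothesis that they have exactly four. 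This contradiction proves $\Gamma'\to G_1$.

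The construction reduces everything to a short case analysis once the $K_7$ coloring is fixed by Lemma~\ref{lem::K7G12}, so I expect the main obstacle to be purely bookkeeping: keeping track of the degree and common-neighbor constraints on $a$ and $b$, and verifying in each configuration that the five chosen edges really do form a (not necessarily induced) copy of $G_1$. The crucial quantitative input is that ``five neighbors, four common'' is exactly one more common neighbor than the configuration forced in the final step can tolerate, which is what closes the argument.
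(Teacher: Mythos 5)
Your proof is correct and follows essentially the same route as the paper's: fix the $K_7$ coloring via Lemma~\ref{lem::K7G12}, force all edges from the two degree-$5$ vertices to the red $K_4$ to be blue and all edges to the red $K_3$ to be red, and then extract two independent red pendant edges at the red triangle. Your final step merely spells out the counting (``five neighbors, exactly four common'') that the paper compresses into ``due to construction of $\Gamma'$ there are two independent edges from $K$ to $\{u,v\}$.''
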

\begin{proof}
 Assume there is a $2$-edge-coloring of $\Gamma'$ without monochromatic $G_1$.
 Due to Lemma~\ref{lem::K7G12} we may assume that the copy of $K_7$ in $\Gamma'$ is colored such that the blue edges induce a copy of $K_{3,4}$ and red consists of two disjoint copies of $K_4$ and $K_3$.
 Let $K$ denote the red $K_3$ and $u$, $v$ the two vertices of degree $5$ in $\Gamma'$.
 Then each edge from $\{u,v\}$ to the red $K_4$ is blue and there are at least two such edges incident to each of $u$, $v$.
 Thus each edge from $u$ or $v$ to $K$ is red, since there is a blue $G_1$ otherwise.
 Due to construction of $\Gamma'$ there are two independent edges from $K$ to $\{u,v\}$ and thus a red $G_1$, a contradiction.
\end{proof}

\begin{conjecture}\label{lem::K8G4}
 A $2$-edge-coloring of $K_8$ does not have monochromatic $G_4$, if and only if one of the color classes induces two disjoint copies of $K_4$ with at most one edge of same color in-between (i.e. the other color spans $K_{4,4}$ or $K_{4,4}-e$).
\end{conjecture}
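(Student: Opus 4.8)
The plan is to prove both directions, dispatching the routine forward implication first using two properties of $G_4$ visible in Figure~\ref{fig::5Vert}: $G_4$ is non-bipartite and $G_4\not\subseteq H_{4,1}$, where $H_{4,1}$ denotes $K_4$ with one pendant edge (and trivially $G_4\not\subseteq K_4$, as $G_4$ has five vertices). Suppose one colour class, say red, induces two vertex-disjoint copies of $K_4$ on sets $A$ and $B$ together with at most one red $A$--$B$ edge. Then blue is $K_{4,4}$ or $K_{4,4}-e$, hence bipartite, so there is no blue $G_4$. Any red connected five-vertex subgraph must meet both $A$ and $B$ and hence must use a red $A$--$B$ edge; since there is at most one such edge, every such subgraph embeds into $K_4$ plus a pendant vertex, i.e.\ into $H_{4,1}$, and therefore cannot be $G_4$. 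Thus these colourings are $G_4$-free.

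For the reverse implication I would rerun the argument of Lemma~\ref{lem::K8G123} with $G_4$ in the role of $G_i$. As there, a monochromatic copy of $H_{3,1}$ exists because $R(H_{3,1})=7\le 8$; say it is red. Absence of a monochromatic $G_4$ then severely restricts how the high-degree vertices of this $H_{3,1}$ may extend in red, forcing a large blue complete bipartite subgraph; its part of size four contains no blue edge and hence spans a red $K_4$; no red edge may leave this $K_4$ without completing a red $G_4$, producing a blue $K_{4,4}$-type graph on the remaining split; and its complementary four vertices span a second red $K_4$. The single respect in which the derivation is weaker than for $G_1,G_2,G_3$ is that $G_4$, not lying inside $H_{4,1}$, tolerates one red edge bridging the two $K_4$'s, which is exactly the slack recorded in the statement (equivalently, blue may be $K_{4,4}-e$ rather than $K_{4,4}$).

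The step I expect to be the main obstacle is proving that \emph{at most one} red edge runs between the two red $K_4$'s. This is where the property $G_4\not\subseteq H_{4,1}$ does the real work: a single red bridge yields only red subgraphs contained in $H_{4,1}$, whereas two red $A$--$B$ edges create a red subgraph that escapes $H_{4,1}$, and one must check that every such configuration already contains a red copy of $G_4$. Establishing this ``two bridges force $G_4$'' implication, together with the matching assertion that blue cannot miss more than the single edge freed by the red bridge, reduces to a finite enumeration of how the five vertices of $G_4$ can be distributed across $A$ and $B$. Carrying out that enumeration cleanly --- ideally by hand, but perhaps only reliably by computer --- is the delicate part, and is presumably why the statement is left as a conjecture.
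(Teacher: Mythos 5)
The first thing to note is that the paper contains no proof of this statement: it is deliberately left as a conjecture, and Lemma~\ref{lem::H86G4} is stated conditionally on it, so there is no argument of the authors to compare yours against. Your ``if'' direction is correct and complete modulo two facts to be read off Figure~\ref{fig::5Vert}, namely that $G_4$ is non-bipartite (so the bipartite blue graph $K_{4,4}$ or $K_{4,4}-e$ contains no blue $G_4$) and that $G_4\not\subseteq H_{4,1}$; your observation that a connected red subgraph on five vertices must split as $4+1$ or $3+2$ across the two red $K_4$'s and pass through the unique bridge, hence embeds in $H_{4,1}$ in either case, is exactly right.

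The ``only if'' direction, however, is where all the content lies, and your proposal does not prove it. ``Rerunning'' the proof of Lemma~\ref{lem::K8G123} is not a mechanical substitution: every step there (which vertices of the monochromatic $H_{3,1}$ may carry further red edges, why a blue $K_{2,4}$ emerges, why its $4$-set spans a red clique, why no red edge leaves that clique) is derived from the specific adjacencies of $G_1$, $G_2$ or $G_3$, and for $G_4$ each of these deductions must be re-established from scratch --- indeed some of them fail as stated, since the conclusion for $G_4$ is genuinely weaker (one red bridge is permitted, and your own sketch momentarily asserts that ``no red edge may leave this $K_4$,'' which contradicts the very slack you then point to). You correctly isolate the two missing claims --- that two red edges between the parts force a red $G_4$, and that the red graph cannot omit an edge inside either part --- but the finite verification is never carried out, as you acknowledge. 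As it stands the proposal establishes one implication of the biconditional; the other remains open, exactly as in the paper.
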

%
%
%

\begin{lemma}\label{lem::H86G4}
 If Conjecture~\ref{lem::K8G4} holds, then $H_{8,6}\to G_4$.
\end{lemma}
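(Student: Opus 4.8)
The plan is to show that every $2$-edge-coloring of $H_{8,6}$ contains a monochromatic $G_4$. Recall (see Figure~\ref{fig::5Vert}) that $G_4$ is the house graph, i.e.\ a triangle and a four-cycle sharing a common edge (equivalently $C_5$ with one chord). Write $U$ for the vertex set of the $K_8$ in $H_{8,6}$ and $v$ for the vertex of degree $6$. Fix a coloring and suppose for contradiction that there is no monochromatic $G_4$. In particular the restriction to $K_8$ has no monochromatic $G_4$, so by Conjecture~\ref{lem::K8G4} we may assume, after possibly swapping the colors, that the red edges inside $U$ form two disjoint copies of $K_4$ on vertex sets $A$ and $B$ with $|A|=|B|=4$, together with at most one red edge between $A$ and $B$; all remaining edges between $A$ and $B$ are blue. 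Denote the (at most one) red crossing edge by $e^\ast$.

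First I would record two gadgets that each produce a monochromatic $G_4$. Gadget (i): if $v$ sends red edges to two vertices $a_1,a_2$ of $A$, then, writing $A=\{a_1,a_2,a_3,a_4\}$, the red four-cycle $v\,a_1\,a_3\,a_2$ together with the apex $a_4$ (red-adjacent to $a_1$ and $a_3$) is a red $G_4$; symmetrically for $B$. Hence, to avoid a red $G_4$, the vertex $v$ has at most one red edge into $A$ and at most one red edge into $B$. Gadget (ii): if $v$ has a blue edge to some $x\in A$ and a blue edge to some $y\in B$ with $xy$ blue, then the triangle $vxy$ and the four-cycle $x\,y\,s\,r$ share the edge $xy$ and give a blue $G_4$ on $\{v,x,y,s,r\}$, provided $s\in A\setminus\{x\}$ and $r\in B\setminus\{y\}$ are chosen so that the three crossing edges $ys$, $sr$, $xr$ are blue. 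Such $s,r$ exist because $e^\ast$ meets $A$ in at most one vertex and $B$ in at most one vertex, and after excluding those two vertices there remain at least two admissible choices in each part.

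Now I would combine the gadgets. Since $v$ misses only two vertices of $U$, it is adjacent to at least two vertices in each of $A$ and $B$, and to at least three vertices in at least one of them; by the symmetry of gadget (ii) assume this is $B$. By gadget (i) the vertex $v$ has at most one red edge into each part, so it has at least one blue edge to $A$ and at least two blue edges to $B$. Pick $x\in A$ with $vx$ blue. Among the at least two vertices $y\in B$ with $vy$ blue, at most one satisfies $xy=e^\ast$, so some such $y$ has $xy$ blue. Gadget (ii) then yields a blue $G_4$, contradicting our assumption. Therefore $H_{8,6}\to G_4$.

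I expect the only genuinely fiddly point to be the bookkeeping around the single red crossing edge $e^\ast$: one must ensure that, after forcing a blue edge from $v$ to each side, the triangle edge $xy$ and the three further crossing edges of the blue four-cycle can all be taken blue at once. Because $e^\ast$ is a single edge while $|A|=|B|=4$, this is always achievable, so the substantive work is the verification of gadgets (i) and (ii) and the short counting that guarantees the needed blue edges from $v$.
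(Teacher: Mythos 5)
Your proof is correct and follows essentially the same route as the paper's: invoke Conjecture~\ref{lem::K8G4} to fix the coloring of the $K_8$, bound the red edges from the degree-$6$ vertex $v$ into each red $K_4$, and then extract a blue triangle through $v$ that extends to a blue $G_4$ via crossing edges. You actually supply details the paper glosses over (the red-$G_4$ gadget justifying ``at most one red edge into each $K_4$'' and the bookkeeping around the single red crossing edge), so there is nothing to fix.
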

\begin{proof}
 Assume there is a $2$-edge-coloring of $H_{8,6}$ without monochromatic $G_4$.
 We assume that the coloring of $K_8$ in $H_{8,6}$ has two disjoint red $K_4$ connected by at most one red edge according to Conjecture~\ref{lem::K8G4}.
 Let $v$ denote the vertex of degree $6$.
 It is incident to at most one red edge to to each of the red $K_4$.
 Thus there is a blue $K_3$ with $v$ and one vertex from each red $K_4$.
 But this forms a blue $G_4$ together with some of the other blue edges, a contradiction.
\end{proof}

\begin{lemma}\label{lem::K1212}
 $K_{12,12}\to K_{2,3}$.
\end{lemma}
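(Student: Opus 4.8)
The plan is to argue by contradiction via a double-counting argument over the triples of one side, in the spirit of the K\H{o}v\'ari--S\'os--Tur\'an bound. I would write the parts of $K_{12,12}$ as $X$ and $Y$ (each of size $12$) and assume some $2$-edge-coloring avoids every monochromatic $K_{2,3}$. I would then concentrate only on copies of $K_{2,3}$ whose part of size $3$ lies in $Y$; such a copy consists of a triple $T\in\binom{Y}{3}$ together with two vertices of $X$ joined to all three vertices of $T$ in a single color.

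First I would set, for each $x\in X$, the quantities $r_x$ and $b_x=12-r_x$ to be its numbers of red and blue neighbours in $Y$. The key observation is that a red $K_{2,3}$ of this shape appears exactly when some triple $T\in\binom{Y}{3}$ is an all-red neighbourhood of two distinct vertices of $X$. Hence, to avoid a red $K_{2,3}$, each triple of $Y$ may serve as the all-red neighbourhood of at most one vertex of $X$, and counting incidences $(x,T)$ with $T$ entirely red-adjacent to $x$ would yield $\sum_{x\in X}\binom{r_x}{3}\le\binom{12}{3}=220$. The same reasoning applied to the blue edges gives $\sum_{x\in X}\binom{b_x}{3}\le 220$, so that
\[
 \sum_{x\in X}\Bigl(\binom{r_x}{3}+\binom{b_x}{3}\Bigr)\le 440 .
\]

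To reach a contradiction I would invoke convexity. The function $f(r)=\binom{r}{3}+\binom{12-r}{3}$ is convex and symmetric about $r=6$ on $\{0,\dots,12\}$, hence minimized at the balanced value $r=6$, where $f(6)=2\binom{6}{3}=40$. This forces the left-hand side above to be at least $12\cdot 40=480>440$, the desired contradiction, whence $K_{12,12}\to K_{2,3}$. The only step I expect to require genuine care is verifying that the per-vertex contribution $f(r_x)$ is minimized at the even split $r_x=b_x=6$ (a one-line check of a convex, symmetric integer function), together with the bookkeeping that bounds the red and blue triple-counts independently by the same $\binom{12}{3}$. The comfortable gap ($480$ versus $440$) suggests the constant $12$ is not tight, but no such optimization is needed here.
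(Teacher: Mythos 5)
Your proof is correct, and it takes a genuinely different (though related) route from the paper's. The paper argues locally: it fixes a single vertex $u$ in one part, takes five neighbours joined to $u$ in a majority colour (say red), observes that each of the other eleven vertices in $u$'s part sends at most two red edges into this $5$-set (else a red $K_{2,3}$ through $u$), hence at least three blue edges into it, and then pigeonholes the $\binom{5}{3}=10$ triples against the $11$ vertices to find two vertices sharing an all-blue triple, i.e.\ a blue $K_{2,3}$. Your argument is the global Zarankiewicz/K\H{o}v\'ari--S\'os--Tur\'an version of the same idea: rather than working inside the neighbourhood of one vertex, you count all monochromatic incidences $(x,T)$ with $T\in\binom{Y}{3}$, bound each colour class by $\binom{12}{3}=220$, and use convexity of $f(r)=\binom{r}{3}+\binom{12-r}{3}$ to force $12\cdot f(6)=480>440$. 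Both are short and elementary; the paper's version isolates the contradiction inside a $5$-element set and needs no convexity, while yours is more systematic and makes transparent where the numbers come from (and that $12$ is not optimal). The steps you flag as needing care do check out: the second differences of $\binom{r}{3}$ on $\{0,\dots,12\}$ are non-negative, so $f$ is discretely convex and symmetric about $6$ with minimum $f(6)=2\binom{6}{3}=40$, and each triple of $Y$ can indeed be an all-red (resp.\ all-blue) neighbourhood of at most one vertex of $X$, giving the two independent bounds of $220$.
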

\begin{proof}
 Consider a $2$-coloring of the edges of $K_{12,12}$ and vertex $u$.
Let $V$ denote the partite set of $K_{12,12}$ containing $u$.
Then $u$ is incident to five edges $uv$, $uw$, $ux$, $uy$, $uz$ of the same color, say red.
From each of the $11$ vertices in $V\setminus\{u\}$ at most two of the edges to $\{v,w,x,y,z\}$ are red, otherwise there is a red $K_{2,3}$.
This means that there are three blue edges between each of the vertices in $V\setminus\{u\}$ and $\{v,w,x,y,z\}$.
There are $10$ sets of size~$3$ in $\{v,w,x,y,z\}$ and $11$ vertices in $V\setminus\{v\}$.
Hence there are two vertices in $V\setminus\{v\}$ and three vertices in $\{v,w,x,y,z\}$ forming a blue $K_{2,3}$ by pigeonhole principle.\end{proof}


\begin{lemma}\label{lem::K9H32}
 A $2$-edge-coloring of $K_9$ does not have a monochromatic copy of $H_{3,2}=K_4-e$ if and only if each color class is isomorphic to the Cartesian product $K_3\times K_3$.
\end{lemma}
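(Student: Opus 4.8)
The plan is to prove both implications, the ``only if'' direction being the substantial one. For the easy direction, I would first record that $K_3\times K_3$ (the $3\times 3$ rook's graph) is $4$-regular and that any two adjacent vertices have exactly one common neighbour: two vertices sharing a row have only the third vertex of that row in common, and likewise for columns. Hence every edge lies in exactly one triangle, so no edge lies in two triangles and there is no copy of $K_4-e$. Since $K_3\times K_3$ is self-complementary, its complement in $K_9$ is again a copy of $K_3\times K_3$; thus if both colour classes are $K_3\times K_3$, neither colour contains $K_4-e$.

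For the ``only if'' direction I would reformulate the hypothesis as: in both the red graph $R$ and the blue graph $B=\overline{R}$ every edge lies in at most one monochromatic triangle, i.e.\ any two adjacent vertices have at most one common neighbour. The first step is regularity. Given a vertex $v$ of red degree $r$, the condition forces the red edges inside $N_R(v)$ to form a matching (each red neighbour has at most one red neighbour in $N_R(v)$). If $r\ge 5$, then among any five vertices of $N_R(v)$ at most two edges are red, so at least eight of the ten edges are blue; but any graph on five vertices with eight edges contains $K_4-e$, yielding a blue $K_4-e$, a contradiction. Hence all red degrees are at most $4$, and by the symmetric argument in blue all red degrees are at least $4$, so $R$ and $B$ are both $4$-regular.

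The second step analyses one neighbourhood. Fixing $v$, the four vertices of $N_R(v)$ span a red matching (at most two edges), hence at least four blue edges; but five or more blue edges on four vertices already contain $K_4-e$, so there are exactly two red edges (a perfect matching) and four blue edges forming a $C_4$. Consequently each red edge $va$ lies in exactly one red triangle (the matching partner of $a$ being the unique common neighbour), every vertex lies in exactly two red triangles, and an incidence count gives exactly six red triangles; moreover no two red triangles share an edge (that would put an edge in two triangles), so any two share at most one vertex. Finally I would reconstruct the grid. Form the \emph{link graph} on the six red triangles, joining two when they meet; each of the nine vertices links the two triangles containing it, and since any two triangles meet in at most one vertex these nine links are distinct pairs, making the link graph cubic on six vertices. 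I expect the crux to be triangle-freeness of this link graph: if triangles $T_1,T_2,T_3$ met pairwise in distinct vertices $p,q,s$, writing $T_1=\{p,s,a\}$, $T_2=\{p,q,b\}$, $T_3=\{q,s,c\}$ and using that $N_R(p)=\{s,a,q,b\}$ carries the perfect matching $\{sa,qb\}$ would force $qs$ to be a non-edge, contradicting $T_3$. A triangle-free cubic graph on six vertices is $K_{3,3}$, so the six triangles split into three pairwise-disjoint ``rows'' and three pairwise-disjoint ``columns'', each row meeting each column in exactly one vertex; assigning each vertex its row and column coordinates gives a bijection with the $3\times 3$ grid under which red adjacency is precisely ``same row or same column'', so $R\cong K_3\times K_3$, and the same argument applied to $B$ (or self-complementarity) gives $B\cong K_3\times K_3$. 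The main obstacle is the triangle-freeness of the link graph (ruling out the triangular prism); the regularity step and the neighbourhood analysis are the other load-bearing parts.
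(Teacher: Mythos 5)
Your proof is correct, but it follows a genuinely different route from the one in the paper. The paper's argument starts from the fact that $R(G_5)=9$ (where $G_5$ is the bowtie, two triangles sharing a vertex), locates a monochromatic bowtie, fixes it as the first row and column of the grid, and then completes the labelling by two ad hoc claims about how a red $K_3$ interacts with a disjoint blue edge; the reconstruction is direct but leans on an external Ramsey number and on a fairly intricate case analysis. You instead translate ``no monochromatic $K_4-e$'' into the local condition that in each colour any two adjacent vertices have at most one common neighbour, deduce $4$-regularity of both colour classes, pin down each red neighbourhood as a perfect matching plus a blue $C_4$, count exactly six pairwise edge-disjoint red triangles, and recover the grid from the link graph of these triangles being a triangle-free cubic graph on six vertices, hence $K_{3,3}$. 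All the load-bearing steps check out: the degree bound via ``eight edges on five vertices contain $K_4-e$'' is sound, the three intersection vertices in your link-graph triangle are automatically distinct (a vertex lies in exactly two red triangles), and the only other cubic graph on six vertices is the prism, which you correctly exclude. Your approach is more self-contained (no appeal to $R(G_5)=9$) and exposes the structure more conceptually; the paper's is shorter once the bowtie is granted. Either argument would serve.
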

\begin{proof}
 First of all observe that $K_3\times K_3$ does not contain a copy of $H_{3,2}$ since every edge is contained in exactly one copy of $K_3$.
 Moreover the complement of $K_3\times K_3$ (as a subgraph of $K_9$) is isomorphic to $K_3\times K_3$.
 Hence the edges of $K_9$ can be $2$-colored without monochromatic $H_{3,2}$ using two edge disjoint copies of $K_3\times K_3$.

 On the other hand consider a $2$-edge-coloring $c$ of $K_9$ without monochromatic $H_{3,2}=K_4-e$.
 We will assign labels $v_{i,j}$, $1\leq i,j\leq 3$, to the vertices of $K_9$ such that this labeling corresponds to an arrangement of the vertices in a $3\times 3$ grid where the red subgraph spans all rows and columns and all other edges are blue.
 
 There is a monochromatic copy of $G_5$ under $c$, say in red, since $R(G_5)=9$, see Figure~\ref{fig::5Vert}.
 Let $K=\{v_{1,1}$, $v_{1,2}$, $v_{1,3}$, $v_{2,1}$, $v_{3,1}\}$ denote the vertices of this $G_5$ such that $v_{1,1}$ is the vertex of degree $4$ and the edges of this red $G_5$ span the first row and first column in the grid, see Figure~\ref{fig::K9H32}.
 Observe that no edge spanned by $K$ is red except for the edges in the red $G_5$.
 Indeed if another edge is red, then there is a red $H_{3,2}$ in $K$.
 Let $K^c$ denote the vertices not in $K$.
 We will use the following claim.

 \begin{figure}[tb]
  \begin{minipage}{0.48\textwidth}
  \centering
   \includegraphics{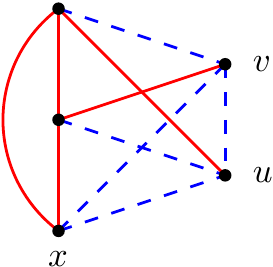}
  \caption{The unique $2$-coloring of $K_5$ 	(up to isomorphism) without monochromatic $H_{3,2}$ provided there is a red $K_3$ (left) and a disjoint blue edge (right).}
  \label{fig::MonoK3Edge}
  \end{minipage}
  \hfill
  \begin{minipage}{0.48\textwidth}
  \centering
   \includegraphics{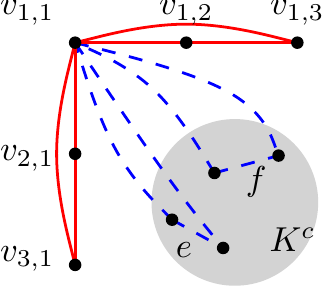}
  \caption{The partial labeling of vertices of $K_9$ under a $2$-coloring without monochromatic $H_{3,2}$ in the proof of Lemma~\ref{lem::K9H32}, with solid red and dashed blue edges.}
  \label{fig::K9H32}
  \end{minipage}
 \end{figure}
 
 \setcounter{claim}{0}
 \begin{claim}\label{claim-1}
  If $C$ is a red copy of $K_3$ and $uv$ is a vertex disjoint blue edge, then there is a vertex $x$ in $C$ such that $xu$ and $xv$ are blue, and there are two independent red and two independent blue edges between $C-x$ and $uv$.
 See Figure~\ref{fig::MonoK3Edge} for an illustration.
 \end{claim}
 
 Indeed, there is at most one red edge between each vertex in $\{u,v\}$ and $C$ and for at most one vertex in $C$ both edges to $uv$ are blue.
 Hence for exactly one vertex in $C$ both edges to $uv$ are blue and there are exactly two further independent blue edges between $C$ and $uv$.
 This proves Claim~\ref{claim-1}.
 
 By assumption the four vertices in $K^c$ do not induce a monochromatic $H_{3,2}$ and hence there are at least two blue edges $e$, $f$.
 Let $C_1$, $C_2$ denote the red copies of $K_3$ in $K$.
 We will apply Claim~\ref{claim-1} to each of the pairs $\{e,C_1\}$, $\{e,C_2\}$, $\{f,C_1\}$, $\{f,C_2\}$.
 There is a vertex $x_i$ in $C_i$, $i=1,2$, such that both edges between $x_i$ and a blue edge in $K^c$ are blue by Claim~\ref{claim-1}.
 Then $x_1=x_2=v_{1,1}$, since otherwise there is blue copy of $H_{3,2}$.
 Hence the blue edges in $K^c$ are independent, since two adjacent blue edges together with $v_{1,1}$ form a blue $H_{3,2}$.
 Thus $e$ and $f$ are the only blue edges in $K^c$.
 See Figure~\ref{fig::K9H32} for the partial labeling.
 Furthermore there are two independent red edges and two independent blue edges from each of the edges $e$ and $f$ to each $C_i-v_{1,1}$, $i=1,2$, by Claim~\ref{claim-1}.
 It remains to find labels for the vertices in $e$ and $f$.
 
 \begin{claim}\label{claim-2}
  For any two vertices $u\in \{v_{1,2},v_{1,3}\}$, $v\in \{v_{2,1},v_{3,1}\}$ there is exactly one vertex $w$ in $K^c$ such that $uw$ and $vw$ are red.
 \end{claim}

 Indeed, assume there are two such vertices $w, w'$ in $K^c$ for some pair $u$,$v$.
 Then the edge $ww'$ is red by Claim~\ref{claim-1} and there is a red $H_{3,2}$.
 Thus there is at most one such vertex.
 Assume there is no such vertex in $K^c$ for some pair.
 Then there is a red $H_{3,2}$, since there are two independent red edges between each of $e$ and $f$ and each $C_i$, $i=1,2$, by Claim~\ref{claim-1}, a contradiction.
 This proves Claim~\ref{claim-2}.
 
 Let $v_{2,2}$ denote the vertex which is adjacent to $v_{1,2}$ and $v_{2,1}$ in red which exists by Claim~\ref{claim-2}.
 Without loss of generality assume $v_{2,2}$ is incident to $e$.
 Let $v_{3,3}$ denote the other vertex incident to $e$.
 Due to Claim~\ref{claim-1} applied to $e$ and $C_1$ and $C_2$, the edges $v_{3,3}v_{1,3}$ and $v_{3,3}v_{3,1}$ are red and the edges $v_{2,2}v_{1,3}$, $v_{2,2}v_{3,1}$, $v_{3,3}v_{1,2}$ and $v_{3,3}v_{2,1}$ are blue.
 With the same arguments we choose $f=v_{3,2}v_{2,3}$ accordingly.
 This shows that the red color class is isomorphic to $K_3\times K_3$.
\end{proof}

\begin{lemma}\label{lem::H96H12}
 $H_{9,6}\to H$ for each $H\in\{H_{3,2},H_1,H_2\}$.
\end{lemma}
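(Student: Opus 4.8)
The plan is to fix an arbitrary $2$-edge-coloring of $H_{9,6}$, write $v$ for the vertex of degree $6$ and $K$ for the spanning $K_9$, and first feed $K$ into Lemma~\ref{lem::K9H32}. That lemma gives a clean dichotomy: either $K$ already contains a monochromatic $H_{3,2}$, or both color classes of $K$ are isomorphic to $K_3\times K_3$. In the first case the statement for $H=H_{3,2}$ is immediate, so the whole proof reduces to understanding the product coloring. In that case I identify $V(K)$ with a $3\times 3$ array, with the red edges joining cells in a common row or column (the rook's graph) and the blue edges joining cells in distinct rows and distinct columns.

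The core step is to exploit that $v$ sees $6$ of the $9$ cells. Let $R_v$ and $B_v$ be the red and blue neighborhoods of $v$ among these, so $|R_v|+|B_v|=6$. A red $H_{3,2}$ with $v$ as its degree-$2$ vertex appears as soon as $R_v$ contains two collinear cells $a,b$: the third cell $c$ of that line completes a red triangle, and $va,vb$ red give the copy. A blue $H_{3,2}$ appears as soon as $B_v$ contains two cells in distinct rows and distinct columns, since such a blue edge $\{a,b\}$ extends uniquely to a blue triangle through the cell in the remaining row and column. To avoid \emph{both}, $R_v$ must be a partial transversal (at most one cell per line, hence $|R_v|\le 3$) and $B_v$ must lie on a single line (hence $|B_v|\le 3$). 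Then $|R_v|=|B_v|=3$, so $R_v$ is a full transversal and $B_v$ is a full row or column; but a full transversal meets every line, forcing $R_v\cap B_v\neq\emptyset$, which contradicts that a neighbor of $v$ has a single color. Hence a monochromatic $H_{3,2}$ through $v$ always exists, proving $H_{9,6}\to H_{3,2}$.

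For $H_1$ and $H_2$ I would run the same skeleton. Since both contain $H_{3,2}$, once the product coloring is in force I first locate, exactly as above, a monochromatic line (a red triangle, say) together with two of its cells $a,b$ joined to $v$ in red. I then upgrade this monochromatic $H_{3,2}$ to the full five-vertex graph using the spare monochromatic neighbors the product structure always provides: each cell of a line has two further red neighbors on its transverse line (and two further blue neighbors), and $v$ still has unused neighbors; attaching the remaining vertex or edge of $H_1$, respectively $H_2$, to these designated cells is a short finite check. The same check, carried out for $H_3$ and $H_4$, is where it fails, which matches the explicit counterexample colorings in Figures~\ref{fig::H96H3} and~\ref{fig::H98H4}.

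The main obstacle is that this reduction is not as automatic for $H_1,H_2$ as for $H_{3,2}$: an $H_1$-free coloring of $K_9$ need not be $H_{3,2}$-free, so Lemma~\ref{lem::K9H32} does not by itself classify the colorings that matter for $H_1,H_2$. The delicate point is therefore to show that every $2$-coloring of $K_9$ outside the product family already carries a monochromatic $H_1$ (respectively $H_2$), strengthening the exclusion from $H_{3,2}$ up to $H_1,H_2$, and only afterward to invoke the degree-$6$ argument on the product colorings. Pinning down precisely which five-vertex attachments a degree-$6$ vertex can force — the boundary that separates $H_1,H_2$ from $H_3,H_4$ — is the crux of the case analysis.
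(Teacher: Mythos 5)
Your treatment of $H=H_{3,2}$ is complete and correct: the dichotomy from Lemma~\ref{lem::K9H32} followed by the transversal-versus-line argument at the degree-$6$ vertex $v$ is essentially the paper's own proof of that part, rephrased in the language of the $3\times 3$ grid, and the counting ($|R_v|\le 3$ since $R_v$ is a partial transversal, $|B_v|\le 3$ since $B_v$ is pairwise collinear and hence lies on one line, $|R_v|+|B_v|=6$, and a full transversal meets every line) is airtight. Likewise, in the product-coloring case your ``short finite check'' upgrading the monochromatic $H_{3,2}$ through $v$ to $H_1$ and to $H_2$ by attaching a pendant edge of the same color at a cell of the line (along its transverse line in the red case, or to one of the two remaining off-line cells in the blue case) does go through.

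The gap is exactly the step you flag as ``the crux'' and then do not carry out: when the $K_9$ is \emph{not} product-colored, Lemma~\ref{lem::K9H32} only tells you that $K_9$ contains a monochromatic $H_{3,2}$, and you still owe the argument that such a coloring of $K_9$ also contains a monochromatic $H_1$ and a monochromatic $H_2$. This statement is true, but it is a substantive case analysis, not a formality, and it is where roughly half of the paper's proof lives. The paper's route: take a red $H_{3,2}$ on a $4$-set $K$ of vertices of the $K_9$ and let $K^c$ be the remaining five vertices of the $K_9$. If some degree-$3$ (resp.\ degree-$2$) vertex of $K$ sends a red edge to $K^c$, that pendant edge immediately yields a red $H_1$ (resp.\ $H_2$). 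Otherwise the two degree-$3$ (resp.\ degree-$2$) vertices of $K$ span a blue $K_{2,5}$ to $K^c$; then either $K^c$ contains two adjacent blue edges (resp.\ at least one blue edge), which together with that $K_{2,5}$ produce a blue $K_4-e$ with a blue pendant edge at a vertex of the required degree, or else the blue graph on $K^c$ is a matching (resp.\ empty), so the red graph on $K^c$ contains $K_5$ minus a matching (resp.\ $K_5$) and hence a red $H_1$ (resp.\ $H_2$). Without this step, or some replacement for it, your argument establishes only $H_{9,6}\to H_{3,2}$ and not the claims for $H_1$ and $H_2$.
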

\begin{proof}
Consider  a $2$-edge-coloring of $H_{9,6}$.   Let $v$ denote the vertex of degree $6$. We shall show that it contains each of the graphs from $\{H_{3,2},H_1,H_2\}$
as a monochromatic subgraph.

Either there is a  monochromatic copy of $H_{3,2}$ in the copy of $K_9$ in $H_{9,6}$ or we 
may assume by Lemma~\ref{lem::K9H32} that the $K_9$ in $H_{9,6}$ is an edge disjoint union of  a red $K_3 \times K_3$ and a blue  $K_3 \times K_3$.  Each edge in $K_9$ belongs to a unique  monochromatic triangle.
 If $v$ sends two blue edges  to vertices $u, w$, where $uw$ is blue, then  the   blue triangle containing $uw$  in $K_9$ together with $v$ form a blue $H_{3,2}$. 
 Thus, we may assume that neighborhood of $v$ via blue edges forms a red clique, and, similarly, its neighborhood via red edges forms a red clique.
 Since degree of $v$ is $6$, and the largest monochromatic clique in $K_9$ is a triangle,  these cliques must be triangles. 
However,  there are no two disjoint red and blue  triangles in $K_9$, so  we arrive at a contradiction. Thus, there is a monochromatic $H_{3,2}$.
 \medskip

Assume that the  monochromatic copy $K$ of $H_{3,2}$ is red.
First, we  assume that $K$ does not contain $v$. 
 Let $K^c$ denote the set of vertices from $K_9$ that are not in $K$.
 If there is a red edge between a vertex of degree $3$ of  $K$ and $K^c$, we have a monochromatic $H_1$.
 If there is a red edge between a vertex of degree $2$ of  $K$ and $K^c$, we have a monochromatic $H_2$.
 If all edges between degree $3$  vertices of $K$ and $K^c$ are blue and there are two adjacent blue edges in $K^c$,
 then there is a blue copy of $H_1$. 
 If all edges between degree $3$  vertices of $K$ and $K^c$ are blue and there are no two adjacent blue edges in $K^c$,
 then $K^c$  forms   a red $K_5$ minus a matching, and thus contains a copy of $H_1$.
 If all edges between degree $2$  vertices of $K$ and $K^c$ are blue, then there is a blue copy of $H_2$ or there is no blue edge induced by  $K^c$, 
 In the latter case $K_c$ induces a red $K_5$ that contains a red copy of $H_2$.
 
 Now, assume that any  monochromatic $H_{3,2}$  contains $v$, i.e.,  there is no monochromatic  
 $H_{3,2}$ in  a copy of $K_9$ of $H_{9,6}$.   Hence the coloring of $K_9$ is like it is described in Lemma~\ref{lem::K9H32}.  Then, it is easy to see that $K$ and  an appropriate  edge of $K_9$ form a  monochromatic copy of $H_1$ and, similarly, a monochromatic  copy of $H_2$.
\end{proof}

\end{appendix}

\end{document}